\begin{document}
 \title{The interaction of elementary waves for nonisentropic flow in a variable cross-section duct\thanks{Received 5 November 2019, accepted 21 December 2020.}}

          %For each author, make a block with the following macros:

          \author{Qinglong Zhang\thanks{School of Mathematics and Statistics, Ningbo University, Ningbo, 315211, P.R.China. Email: zhangqinglong@nbu.edu.cn. This work is sponsored by K.C. Wong Magna Fund in Ningbo University. }}
          \
 %         \thanks{address, zhangqinglong@nbu.edu.cn.}
          
         \pagestyle{myheadings} \markboth{INTERACTIONS FOR NONISENTROPIC FLOW}{QINGLONG ZHANG} \maketitle

          \begin{abstract}
              The interaction of elementary waves for isentropic flow in a variable cross-section duct is obtained (\cite{ShengZhang}). The authors have discussed rarefaction wave or shock wave interacts with stationary wave. In this paper, we extend their results to the nonisentropic flow. It can be proved that if one changes $(u,\rho)$ plane in isentropic flow with $(u,p)$ plane in nonisentropic case, the interaction results of rarefaction wave or shock wave with stationary wave can be moved parallel from the isentropic case when contact discontinuity is involved. Thus we mainly focus on the interactions between contact discontinuity and stationary wave.  Some numerical results are given to verify our analysis. The results can apply to the interaction of more complicated wave patterns.
          \end{abstract}
\begin{keywords}Duct flow; interaction of elementary waves; non-isentropic; Riemann problem.
\end{keywords}

 \begin{AMS} 35L65; 35L80; 35R35; 35L60; 35L50
\end{AMS}

          \section{Introduction}\label{sec1}
          
The equations of nonisentropic flow in a variable cross-section duct are given by
\begin{equation}\label{1.1}
\left\{
\begin{array}{l}
(a\rho)_t+(a\rho u)_{x}=0,\\
(a\rho u)_t+(a\rho u^2+ap)_x=pa_x,\\
(a\rho E)_t+(au(\rho E+p))_x=0,\\
a_t=0, \\
\end{array}
\right.
\end{equation}
where $\rho, u, p$ and $E$ represent the density, velocity, pressure and total energy of the fluid, respectively. $E=e+\frac{1}{2}u^2$ with the internal energy $e=\frac{p\tau}{\gamma-1}$.The state equation is given by $p=\kappa(S) \rho^{\gamma}$, where $\kappa(S)$ is a variant corresponding to $S$ and $1<\gamma<3$. Generally, $a(x)$~is given as a prior, here we view it as a variant which is independent of time (\cite{Andrianov1,Goatin5,LeFloch11}).

System \eqref{1.1} is not conservative because of the existence of source term, which can be seen as nonconservative product (\cite{LeflochTzavaras13}). A general definition on the nonconservative product can be found in  \cite{DalLeflochMurat4}. The discretization of source term plays an important role in the numerical approximations in many areas, for example, the nozzle flow model \cite{Andrianov1}, the shallow water equations \cite{Jin} and the multiphase flow models (\cite{Andrianov2,Baer2,KronerThanh8,KronerLeFlochThanh9}), to name just a few. 

In 2003, LeFloch and Thanh (\cite{LeflochThanh12}) solved the isentropic flow in a variable cross-section duct by dividing $(u,\rho)$ plane with the coinciding characteristic curves. In each area, system \eqref{1.1} can be viewed as strictly hyperbolic. The Riemann problem of system (\ref{1.1}) was studied by Andrianov and Warnecke (\cite{Andrianov1})  and  Thanh (\cite{Thanh17}), where the admissible criterion is proposed to select a physical relevant solution.

Recently, Sheng and Zhang \cite{ShengZhang} investigated the interaction of elementary waves of isentropic flow in variable cross-section duct. They give the results when the rarefaction wave or shock wave interacts with the stationary wave. The interaction results apart from the stationary wave can be found in \cite{ChangHsiao3,Smoller}. In this paper, we aim to extend their results to noninsentropic case. While the interaction results of isentropic flow can be moved parallel to nonisentropic case when contact discontinuity is involved, we devote to the interaction of contact discontinuity with stationary wave. The characteristic analysis method is used to analysis all the possible cases. Besides, some numerical results are given to support our analysis. We believe that the interaction results can apply to the interactions of more complicated wave patterns.

This paper is organized as follows. In section \ref{sec2}, we recall the characteristic analysis method and give the elementary waves. In section \ref{sec3}, we mainly discuss the interaction results of contact discontinuity with the stationary wave.  The initial states in both supersonic area and subsonic area are considered. Some numerical results are given in section \ref{sec4} to verify our analysis.

\section{Preliminaries}\label{sec2}
\subsection{Characteristic analysis and elementary waves}
If we take $(p,S)$ as independent variables, $\rho$ can be viewed as the function of $(p,S)$
\begin{equation}\label{2.1}
\rho=\rho(p,S)=\left(\frac{p}{\gamma-1}{\rm exp}(\frac{S_*-S}{C_v})\right)^{1/\gamma},
\end{equation}
where $S_*, C_v$ are constants.
Denote $U=(p,u,S,a)$, when considering a smooth solution, system \eqref{1.1} can be rewritten as
\begin{equation}\label{2.2}
\partial_t U+A(U)\partial_x U=0,
\end{equation}
where
$$
A=\left(
\begin{array}{cccc}
\rho c^2 &u & 0 & \frac{\rho uc^2}{a} \\
 u &1/\rho & 0 & 0 \\
0 &0 & u & 0\\
0 & 0 & 0 & 0
\end{array}
 \right).
$$
The matrix $A$ has four eigenvalues
\begin{equation}\label{2.3}
\lambda_1=u-c, \quad \lambda_2=u, \quad \lambda_3=u+c,\quad \lambda_4=0,
\end{equation}
where $c=\sqrt{p^{\prime}(\rho)}$. The corresponding right eigenvectors are
\begin{equation}\label{2.4}
\overrightarrow{r}_1=(\rho,-1/c,0,0)^T,~\overrightarrow{r}_2=(0, 0,1,0)^T,~
\overrightarrow{r}_3=(\rho,1/c,0,0)^T,~ \overrightarrow{r}_4=(-2\rho c^2,2c^2,0,1)^T.
\end{equation}
The 2- and 4-characteristic fields are linearly degenerate, while the 1- and 3-characteristic fields are genuinely nonlinear:
\begin{equation}\label{2.5}
-\triangledown \lambda_1(U)\cdot r_1(U)=\triangledown \lambda_3(U)\cdot r_3(U)=\frac{1}{2\sqrt{p^{'}(\rho)}}(\rho p''(\rho)+2p'(\rho))>0.
\end{equation}
System \eqref{1.1} is not strictly hyperbolic because $\lambda_1,\lambda_2$ and $\lambda_3$ may coincide with $\lambda_4$. More precisely, setting
\begin{equation}\label{2.6}
\Gamma_{\pm}:u=\pm\sqrt{\gamma \kappa(S)^{1/\gamma}}~p^{\frac{\gamma-1}{2\gamma}}, \quad \Gamma_0: u=0,
\end{equation}
one can see that
\begin{equation}\label{2.7}
\lambda_1=\lambda_4 \quad {\rm on} \quad \Gamma_{+},\qquad
\lambda_3=\lambda_4 \quad {\rm on} \quad \Gamma_{-},\qquad
\lambda_2=\lambda_4 \quad {\rm on} \quad \Gamma_{0}.
\end{equation}
In the three-dimensional space of the coordinates $(p,u,S)$ where $a$ is constant, the three surfaces $\Gamma_{\pm}$ and $\Gamma_0$ separate the space into four regions. For convenience, we will view them as $D_{1}, D_2, D_3$ and $D_4$:
\begin{equation}\label{2.8}
\begin{array}{llll}
D_1  =\Big\{(p,u,S,a)\big|u>\sqrt{\gamma \kappa(S)^{1/\gamma}}~p^{\frac{\gamma-1}{2\gamma}}~\Big\},  \\
D_2  =\Big\{(p,u,S,a)\big|0<u<\sqrt{\gamma \kappa(S)^{1/\gamma}}~p^{\frac{\gamma-1}{2\gamma}}~\Big\},\\
D_3  =\Big\{(p,u,S,a)\big|-\sqrt{\gamma \kappa(S)^{1/\gamma}}~p^{\frac{\gamma-1}{2\gamma}}<u<0~\Big\},\\
D_4  =\Big\{(p,u,S,a)\big|u<-\sqrt{\gamma \kappa(S)^{1/\gamma}}~p^{\frac{\gamma-1}{2\gamma}}~\Big\}.
\end{array}
\end{equation}
In each of the region, the system is strictly hyperbolic and one has
\begin{equation}\label{2.9}
\begin{array}{llll}
&\lambda_1>\lambda_4, &{\rm in} & D_1,\\
&\lambda_1<\lambda_4<\lambda_2, &{\rm in} & D_2,\\
&\lambda_2<\lambda_4<\lambda_3, &{\rm in} & D_3,\\
&\lambda_3<\lambda_4, &{\rm in} & D_4.
\end{array}
\end{equation}

\subsection{The rarefaction waves}
First, we look for self-similar solutions. The Riemann invariants of each characteristic can be computed by
\begin{equation}\label{2.10}
\left\{
\begin{array}{lll}
\lambda_1=u-c: &\big\{a,u+\frac{2c}{\gamma-1},S\big\},  \\
\lambda_2=u: &\big\{a, u,p\big\},   \\
\lambda_3=u+c: &\big\{a,u-\frac{2c}{\gamma-1},S\big\},\\
\lambda_4=0: &\big\{a\rho u, \frac{u^2}{2}+\frac{c^2}{\gamma-1},S\big\}.
\end{array}
\right.
\end{equation}
The cross-section $a(x)$ remains constant across rarefaction wave, system \eqref{1.1} degenerates to the gas dynamic equations
\begin{equation}\label{2.11}
\left\{
\begin{array}{l}
\rho_t+(\rho u)_{x}=0,\\
(\rho u)_t+(\rho u^2+p)_x=0,\\
(\rho E)_t+(u(\rho E+p))_x=0.
\end{array}
\right.
\end{equation}
For a given left hand state $(p_0,u_0,S_0,a_0)$, we determine the 1-wave and 3-wave rarefaction curves that can be connected on the right by
\begin{equation}\label{2.12}
\left\{
\begin{array}{l}
\displaystyle R_1(U,U_0):~ u=u_0-\int_{p_0}^{p} \frac{\sqrt{p'(\rho)}}{\rho}\,{\rm d}p=u_0-\frac{2\sqrt{\gamma\kappa^{1/\gamma}}}{\gamma-1}
\left(p^{\frac{\gamma-1}{2\gamma}}-p_0^{\frac{\gamma-1}{2\gamma}}\right),\quad p<p_0,\\[8pt]
\displaystyle R_3(U,U_0):~ u=u_0+\int_{p_0}^{p} \frac{\sqrt{p'(\rho)}}{\rho}\,{\rm d}p=u_0+\frac{2\sqrt{\gamma\kappa^{1/\gamma}}}{\gamma-1}
\left(p^{\frac{\gamma-1}{2\gamma}}-p_0^{\frac{\gamma-1}{2\gamma}}\right),  \quad p>p_0.\\
\end{array}
\right.
\end{equation}
\subsection{The stationary waves}
The Rankine-Hugoniot relation associated with the last equation of \eqref{1.1} is that
\[-\sigma[a]=0,\]
where $[a]:~=a_1-a_0$ is the jump of the cross-section $a$. One can derive the conclusions:\\
1)~~$\sigma=0:$~the shock speed vanishes, here we assume $[a]\not=0$ and called stationary contact discontinuity; \\
2)~~$\sigma\neq 0:$~the cross-section~$a$~remains constant across the non-zero speed shocks.\\
Across the stationary contact discontinuity, the Riemann invariants remain constant, from the last equation of \eqref{2.5}, the right hand states $(p,u,S,a)$ connected with the left hand state $(p_0,u_0,S_0,a_0)$ should satisfy
\begin{equation}\label{2.13}
\left\{
\begin{array}{l}
a_0 \rho_0 u_0 =a\rho u,\\
\displaystyle \frac{u_0^{2}}{2}+\frac{\kappa \gamma}{\gamma-1} \rho_0^{\gamma-1}= \frac{u^{2}}{2}+\frac{\kappa \gamma}{\gamma-1} \rho^{\gamma-1},\\[8pt]
\displaystyle \frac{p_0}{\rho_0^{\gamma}}=\frac{p}{\rho^{\gamma}}=\kappa(S).
\end{array}
\right.
\end{equation}
By solving \eqref{2.13}, we have the following results.
\begin{lem}\label{lem2.1}
Given the left hand state $U_0=(p_0,u_0,S_0,a_0)$, \eqref{2.13} has at most two solutions $U_*=( p_*,u_*,S_0, a)$ and $U^{*}=(p^{*},u{*},S_0, a)$ for any $a>0$, if and only if $a\geq \displaystyle a_{\rm min}(U_0)$, where
\begin{equation}\label{2.14}
a_{\rm min}(U_0)=\frac{a_0 \rho_0 |u_0|}{\sqrt{\kappa \gamma}\rho_m^{\frac{\gamma+1}{2}}}
~~ {\rm and} ~~ \rho_m=\left(\frac{\gamma-1}{\kappa\gamma(\gamma+1)}u_0^{2}+\frac{2}{\gamma+1} \rho_0^{\gamma-1}\right)^{\frac{1}{\gamma-1}}.
\end{equation}
More precisely,\\
1) If $a< a_{\rm min}(U_0)$, \eqref{2.13} has no solution, so there is no stationary waves.\\
2) If $a> a_{\rm min}(U_0)$, there are two points $U_*, U^{*}$ satisfying \eqref{2.13}, which can connect with $U_0$ by stationary waves.\\
3) If $a=a_{\rm min}(U_0)$, $U_*$ and $U^{*}$ coincide.
\end{lem}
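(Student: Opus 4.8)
The plan is to reduce the system \eqref{2.13} to a single scalar equation in the unknown $\rho$ and then study the existence of positive roots. First I would use the last relation $p/\rho^\gamma = \kappa(S_0)$ to conclude that $S = S_0$ and $\kappa$ is preserved across the stationary wave, so the problem is genuinely two-dimensional in $(p,u)$ (equivalently $(\rho,u)$). From the first relation I solve for the velocity explicitly: $u = a_0\rho_0 u_0/(a\rho)$, which is valid wherever $\rho>0$. Substituting this into the Bernoulli-type second relation eliminates $u$ entirely and yields a single equation for $\rho$ of the form
\begin{equation}\label{plan-mass-func}
\Phi(\rho):=\frac{1}{2}\left(\frac{a_0\rho_0 u_0}{a\rho}\right)^{2}+\frac{\kappa\gamma}{\gamma-1}\rho^{\gamma-1}=\frac{u_0^{2}}{2}+\frac{\kappa\gamma}{\gamma-1}\rho_0^{\gamma-1}=:H_0,
\end{equation}
where the right-hand side $H_0$ is the conserved total enthalpy determined by the left state $U_0$. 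The whole lemma then becomes a statement about how many positive roots $\Phi(\rho)=H_0$ has.

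Next I would analyze the function $\Phi$ on $(0,\infty)$. As $\rho\to 0^{+}$ the kinetic term blows up like $\rho^{-2}$, so $\Phi\to+\infty$; as $\rho\to+\infty$ the pressure term $\rho^{\gamma-1}\to+\infty$ (using $\gamma>1$), so again $\Phi\to+\infty$. Computing $\Phi'(\rho)$ shows it has a single interior critical point, a global minimum, obtained by setting
\begin{equation}\label{plan-deriv}
\Phi'(\rho)=-\left(a_0\rho_0 u_0\right)^{2}a^{-2}\rho^{-3}+\kappa\gamma\,\rho^{\gamma-2}=0.
\end{equation}
Solving \eqref{plan-deriv} for $\rho$ gives the critical density, and I would verify that it coincides with the stated $\rho_m$ in \eqref{2.14} (here $\rho_m$ is written in terms of $U_0$ via $H_0$, so one checks $\Phi(\rho_m)$ equals the minimum by substituting \eqref{plan-deriv} back and using the relation $a_0\rho_0|u_0|=a\sqrt{\kappa\gamma}\,\rho_m^{(\gamma+1)/2}$ that defines $a_{\min}$). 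Because $\Phi$ is strictly convex in the relevant sense (it decreases to a unique minimum then increases), the equation $\Phi(\rho)=H_0$ has exactly two roots when $H_0>\Phi_{\min}$, exactly one (double) root when $H_0=\Phi_{\min}$, and none when $H_0<\Phi_{\min}$.

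Finally I would translate the condition $H_0 \gtrless \Phi_{\min}$ into the condition $a\gtrless a_{\min}(U_0)$. The key observation is that $\Phi_{\min}=\Phi(\rho_m)$ depends on $a$ monotonically: since the kinetic term carries a factor $a^{-2}$, increasing $a$ lowers $\Phi$ pointwise and hence lowers the minimum value $\Phi_{\min}$, while $H_0$ is independent of $a$. Thus there is a unique threshold value of $a$ at which $\Phi_{\min}=H_0$, and a direct computation identifies it with $a_{\min}(U_0)$ in \eqref{2.14}; for $a$ above the threshold one gets two solutions $U_*,U^*$, at the threshold they coincide, and below it there are none. The main obstacle I anticipate is purely algebraic: carefully carrying out the elimination and the critical-point computation so that the resulting minimum value matches $H_0$ exactly at $a=a_{\min}$, i.e.\ verifying that the expression $\rho_m$ given in \eqref{2.14} is precisely the minimizer and that the equality $\Phi(\rho_m)=H_0$ reproduces the stated formula for $a_{\min}$. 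Once that bookkeeping is done, the convexity argument delivers the three cases immediately.
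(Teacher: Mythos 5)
Your proof is correct, and it is essentially the standard argument behind Lemma \ref{lem2.1}: the paper itself does not prove the lemma but defers to \cite{LeflochThanh12,Thanh17}, where \eqref{2.13} is reduced in exactly this way to a scalar equation $\Phi(\rho)=H_0$ and the two/one/zero-root trichotomy is read off from the shape of $\Phi$ (decreasing to a unique sonic minimum, then increasing). One caution in your bookkeeping: the minimizer of $\Phi$ is the $a$-dependent density $\rho_c=\bigl((a_0\rho_0u_0)^2/(a^2\kappa\gamma)\bigr)^{1/(\gamma+1)}$, and it equals the $a$-independent $\rho_m$ of \eqref{2.14} only at the threshold $a=a_{\rm min}(U_0)$, so the claim that the critical density ``coincides with the stated $\rho_m$'' should not be read literally for general $a$; the clean way to finish is the chain $\Phi(\rho_c)=\tfrac{(\gamma+1)\kappa\gamma}{2(\gamma-1)}\rho_c^{\gamma-1}\le H_0\Leftrightarrow\rho_c\le\rho_m\Leftrightarrow a\ge a_{\rm min}(U_0)$, which is exactly what your monotonicity-in-$a$ observation delivers (note also that $u_0\neq 0$ is implicitly required for $\Phi\to+\infty$ as $\rho\to 0^{+}$; if $u_0=0$ the stationary wave degenerates).
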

\noindent%
The proof is straightforward. For the details, we refer to \cite{LeflochThanh12,Thanh17} and don't repeat here.

Across the stationary contact discontinuity denoted by $S_0(U;U_0)$, the states $U_*=(p_*,u_*,S_0,a)$ and $U^*=(p^{*},u^{*},S_0,a)$ have the following properties
\begin{align}
S_0(U;U_0)=\left\{\begin{array}{lll}
S_0(U_*;U_0), &|u_*|>c_*,\\
S_0(U^*;U_0), &|u^*|<c^*,
\end{array}\right.
~~{\rm more~ precisely,}\\
S_0(U;U_0):~\left\{\begin{array}{lll}
 (u_*,p_*,S_0,a)\in\left\{
 \begin{array}{lll}
 D_1,&u_0>0,\\
 D_4,&u_0<0,
 \end{array}\right.\\
 (u^{*},p^{*},S_0,a)\in  \left\{
 \begin{array}{lll}
 D_2,&u_0>0,\\
 D_3,&u_0<0.
 \end{array}\right.
\end{array}\right.
\end{align}

As shown in \cite{LeflochThanh12}, the Riemann problem for \eqref{1.1} may admit up to a one-parameter family of solutions. This phenomenon can be avoided by requiring Riemann
solutions to satisfy an admissibility criterion: monotone condition on the component $a$. Followed by \cite{Andrianov1,LeflochThanh12} and \cite{Thanh17}, we
impose the following global entropy condition on stationary wave of \eqref{1.1}.\\
{\bf Global entropy condition.} Along the stationary curve $S_0(U;U_0)$ in the $(u,p)$-plane, the cross-section area $a$ obtained from \eqref{2.13} is a monotone function of $\rho$.

Under the global entropy condition, we call the stationary contact discontinuity as stationary wave and have the following results.
\begin{lem}\label{lem2.2}
Global entropy condition is equivalent to the statement that any
stationary wave has to remain in the closure of only one domain $D_i, i=1,2,3,4$.
\end{lem}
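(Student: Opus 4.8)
The plan is to parametrize the stationary curve by the density $\rho$ and to show that the monotonicity of $a$ as a function of $\rho$ is governed entirely by the sign of $c^2-u^2$, which is exactly the quantity separating the four domains $D_i$. Fix the left state $U_0=(p_0,u_0,S_0,a_0)$ with $u_0\neq 0$. Along the stationary curve, relations \eqref{2.13} say that the entropy is constant (so $\kappa$ is constant and $c^2=p'(\rho)=\kappa\gamma\rho^{\gamma-1}$), that the mass flux $m:=a\rho u=a_0\rho_0u_0\neq 0$ is constant, and that the Bernoulli invariant $B:=\tfrac{u^2}{2}+\tfrac{c^2}{\gamma-1}$ is constant. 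Solving the first relation for $a$ gives $a=m/(\rho u)$, so that $a$ is a genuine function of $\rho$ along the wave once the branch of $u$ is fixed.

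First I would record two elementary facts. Because $m\neq 0$ and $a$ is finite, $u$ never vanishes on the wave; hence $u$ keeps a fixed sign and the curve lies entirely in $\{u>0\}$ or in $\{u<0\}$, i.e. it can never cross the surface $\Gamma_0$ of \eqref{2.6}. Consequently, comparing with \eqref{2.8}, the only way the wave can leave the closure of a single $D_i$ is to cross one of the sonic surfaces $\Gamma_\pm$; a short computation using $p=\kappa\rho^\gamma$ shows that $\Gamma_\pm$ is exactly the sonic locus $u^2=c^2$. The heart of the argument is then the computation of $da/d\rho$: differentiating the Bernoulli relation gives $u\,du/d\rho=-c^2/\rho$, and logarithmic differentiation of $a=m/(\rho u)$ (valid for either sign of $u$) yields
\[
\frac{da}{d\rho}=\frac{a}{\rho}\,\frac{c^2-u^2}{u^2}.
\]
Since $a,\rho,u^2>0$, the sign of $da/d\rho$ equals the sign of $c^2-u^2$: it is strictly negative in the supersonic regions $D_1,D_4$, strictly positive in the subsonic regions $D_2,D_3$, and vanishes exactly on $\Gamma_\pm$, which recovers the minimum $a_{\rm min}$ of Lemma \ref{lem2.1} and the value $\rho_m$ in \eqref{2.14}.

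The equivalence now follows from this sign analysis. If the wave remains in the closure $\overline{D_i}$ of a single domain, then $c^2-u^2$ has one fixed strict sign on its interior, so $da/d\rho$ never changes sign and $a$ is monotone in $\rho$ — possibly with $da/d\rho=0$ at an endpoint lying on $\Gamma_\pm$, which does not destroy monotonicity. Conversely, suppose the global entropy condition holds but the wave visits two distinct open domains. By the observation above it cannot cross $\Gamma_0$, so it must cross a sonic surface $\Gamma_\pm$ at an interior point $\rho=\rho_m$; there $da/d\rho$ flips sign from the displayed formula, forcing a strict local extremum of $a(\rho)$ and contradicting monotonicity. Hence monotonicity of $a$ along the stationary curve is equivalent to confinement to a single $\overline{D_i}$.

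The step I expect to be the real content is the identification of the sign of $da/d\rho$ with the sonic condition through the displayed formula; once that formula and the fact that $\Gamma_0$ is unreachable (because $a\to\infty$ as $u\to 0$) are in hand, the geometric reading in terms of the $D_i$ is immediate. The only delicate point is the boundary bookkeeping — permitting the wave to touch $\Gamma_\pm$ at an endpoint, where $da/d\rho=0$, while still counting it as confined to one closure — which is precisely why the statement is phrased with closures $\overline{D_i}$ rather than the open domains.
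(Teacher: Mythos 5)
Your proof is correct. Note that the paper itself offers no proof of Lemma \ref{lem2.2} at all: it is stated as a known fact, deferred (like Lemma \ref{lem2.1}) to the cited works of LeFloch--Thanh and Thanh, and your argument is essentially the standard one from those references --- parametrize the stationary curve by $\rho$ using constancy of entropy, mass flux $m=a\rho u$ and the Bernoulli invariant, derive $u\,\frac{{\rm d}u}{{\rm d}\rho}=-\frac{c^2}{\rho}$ and hence $\frac{{\rm d}a}{{\rm d}\rho}=\frac{a}{\rho}\,\frac{c^2-u^2}{u^2}$, so that monotonicity of $a(\rho)$ is exactly the statement that $c^2-u^2$ keeps one sign, i.e.\ confinement to the closure of one $D_i$ of \eqref{2.8}. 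Your two supporting observations are also the right ones and are correctly justified: $\Gamma_0$ is unreachable since $u=m/(a\rho)$ carries the fixed sign of $m\neq 0$, and $\Gamma_\pm$ of \eqref{2.6} is precisely the sonic locus $u^2=c^2$, where ${\rm d}a/{\rm d}\rho$ genuinely changes sign (because $c^2-u^2$ is strictly increasing in $\rho$ along the curve), which also recovers $\rho_m$ and $a_{\rm min}$ of \eqref{2.14}.
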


\subsection{The shock waves and contact discontinuities}
For the non-zero speed shocks, the left hand state $U_0=(p_0,u_0,S_0,a_0)$ and the right hand state $U=(p,u,S,a)$ are connected by the Rankine-Hugoniot relations corresponding to \eqref{2.11}
\begin{equation}\label{2.15}
\left\{\begin{array}{ll}
-\sigma[\rho]+[\rho u]=0,\\
-\sigma[\rho u]+[\rho u^2+p(\rho)]=0,\\
-\sigma[\rho E]+[\rho uE+up]=0,
\end{array}\right.
\end{equation}
which is equivalent to
\begin{equation}\label{2.16}
\sigma_i(U, U_0)=u_0\mp\left(\rho\rho_0\frac{[p]}{[\rho]}\right)^{1/2},\quad i=1,3.
\end{equation}
When $[p]=0$, $[\rho]\neq0$, we have the contact discontinuity corresponding to $\sigma_2(U, U_0)$, which is given by $J(U,U_0):~u=u_0,p=p_0,\rho \neq \rho_0$.

A shock wave should satisfy the Lax shock conditions (\cite{Lax10})
\begin{equation}\label{2.17}
\lambda_i(U)<\sigma_i(U,U_0)<\lambda_i(U_0),\quad i=1,3.
\end{equation}
Using the Lax shock conditions, the 1-and 3-families of shock waves with non-zero speed connecting a given left hand state $U_0=(p_0,u_0,S_0,a_0)$ to the right hand  state $U=(p,u,S,a)$ are
\begin{equation}\label{2.18}
S_1(U,U_0):\left\{ \begin{array}{lll}\displaystyle \tau=\frac{\tau_0(\mu^2p+p_0)}{p+\mu^2p_0}, ~~~~~{\rm where}\quad \mu^2=\frac{\gamma-1}{\gamma+1},\quad \tau=\frac{1}{\rho},\\[8pt]
\displaystyle u=u_0-(p-p_0)\sqrt{\frac{(1-\mu^2)\tau_0}{p+\mu^2 p_0}},~~~~p>p_0,\\
\end{array}
\right.\\
\end{equation}
\begin{equation}\label{2.19}
S_3(U,U_0):\left\{\begin{array}{lll} \displaystyle \tau=\frac{\tau_0(\mu^2p+p_0)}{p+\mu^2p_0},\\[8pt]
\displaystyle u=u_0+(p-p_0)\sqrt{\frac{(1-\mu^2)\tau_0}{p+\mu^2 p_0}},~~~~p<p_0.\\
\end{array}
\right.
\end{equation}
The 1- and 3-shock wave speeds ~$\sigma_i(U,U_0) (i=1, 3)$ may change their signs along the shock curves in the $(u,p)$ plane, more precisely,
\begin{equation}\label{2.20}
\sigma_1(U,U_0)~\left\{\begin{array}{lll}
~~<0, &U_0\in D_2\cup D_3\cup D_4,\\
\left.\begin{array}{ll}
<0, &p>\tilde{p}_0,\\
=0, &p=\tilde{p}_0,\\
>0, &p_0<p<\tilde{p}_0,
\end{array}\right\} &U_0\in D_1,
\end{array}\right.
\end{equation}
and
\begin{equation}\label{2.21}
\sigma_3(U,U_0)~\left\{\begin{array}{lll}
~~>0, &U_0\in D_1\cup D_2\cup D_3,\\
\left.\begin{array}{ll}
>0, &p>\bar{p}_0,\\
=0, &p=\bar{p}_0,\\
<0, &p_0<p<\bar{p}_0,
\end{array}\right\} &U_0\in D_4,
\end{array}\right.
\end{equation}
where $\widetilde{U}_0=(\tilde{p}_0,\tilde{u}_0,\tilde{S}_0,a)\in D_2, \overline{U}_0=( \bar{p}_0, \bar{u}_0,\bar{S}_0,a)\in D_3$.

Let us define the backward and forward wave curves
\begin{align*}
W_1(p;U_0) & =\left\{\begin{array}{ll}
R_1(p;U_0), &p<p_0,\\
S_1(p;U_0), &p>p_0,
\end{array}\right.\\
W_3(p;U_0) & =\left\{\begin{array}{ll}
R_3(p;U_0), &p>p_0,\\
S_3(p;U_0), &p<p_0,
\end{array}\right.
\end{align*}
The wave curve $W_1(p;U_0)$ is strictly decreasing and convex in the $(u,p)$ plane, while the wave curve $W_3(p;U_0)$ is strictly increasing and concave.

The elementary waves of \eqref{1.1} consists of rarefaction waves ($W_1(p;U_0)$), shock waves ($W_3(p;U_0)$), contact discontinuities ($J(U,U_0)$) and stationary waves ($S_0(U,U_0)$).

Now we turn to the interaction of elementary waves for \eqref{1.1}. Since in \cite{ShengZhang}, the authors have already discussed the isentropic case. For nonisentropic case, the result can be moved parallel if one changes the $(u,\rho)$ plane into $(u,p)$ plane as contact discontinuity is involved. Thus we mainly focus on the interaction of contact discontinuity with the stationary wave. The results are given in the following section.

\section{The interactions of contact discontinuity with stationary wave}\label{sec3}

To study the contact discontinuity interacts with the stationary wave, we consider the initial value problem \eqref{1.1} with
\begin{equation}\label{3.1}
(p,u,S,a)\Big|_{t=0}=
\left\{\begin{array}{lll}
U_-=(p_-,u_-,S_-,a_0), &x<x_1,\\
U_m=(p_m,u_m,S_m,a_0), &x_1<x<x_2,\\
U_+=(p_+,u_+,S_+,a_1), &x>x_2.
\end{array}\right.
\end{equation}
Here $U_-$ and $U_m$ are connected by a contact discontinuity $J$,  $U_m$ and $U_+$ are connected by a stationary wave $S_0$. That is 
\begin{equation}\label{3.2}
\left\{
\begin{array}{lll}
U_m\in J(U,U_-):\quad u_-=u_m>0, \quad p_-=p_m,\quad \rho_-\neq \rho_m, \\[5pt]
U_+ \in S_0(U,U_m):~~\left\{
\begin{array}{lll}
a_0 \rho_m u_m=a_1 \rho_+ u_+,\\[5pt]
\displaystyle \frac{u_m^2}{2}+\frac{c_m^2}{\gamma-1}=\frac{u_+^2}{2}+\frac{c_+^2}{\gamma-1},\\[12pt]
\displaystyle \frac{p_m}{\rho_m^{\gamma}}=\frac{p_+}{\rho_+^{\gamma}}=\kappa_+. \\
\end{array}
\right.
\end{array}
\right.
\end{equation}

In the following part, we use the characteristic analysis method to discuss the interaction results. To begin with, we are interested in the properties of the stationary wave curve $S_0(U_1,U_0)$ which is formulated by making the left-hand state $U_0\in J(U_m,U)$, the result is shown in the following lemma.
\begin{lem}\label{lem3.1}
Denote $U_0\in J(U_m,U)$ as the left hand state, the right hand state $U_1$ that can be connected with $U_0$ by the stationary wave  $S_0(U_1,U_0)$ is on the curve $l(u_+,p_+): up^{\frac1{\gamma}}=u_+p_+^{\frac1{\gamma}}$.
\end{lem}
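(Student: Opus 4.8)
The plan is to parametrize the contact-discontinuity curve $J(U_m,U)$ and then push each of its points through a stationary wave to the \emph{common} target cross-section $a_1$, tracking a single algebraic invariant of the flow. By the definition $J(U,U_0):u=u_0,\,p=p_0,\,\rho\neq\rho_0$, every left state $U_0$ on $J(U_m,U)$ has the form $U_0=(p_m,u_m,S_0,a_0)$: the velocity $u_0=u_m$ and the pressure $p_0=p_m$ are frozen, while the entropy $S_0$ (equivalently $\kappa_0=\kappa(S_0)$ and the density $\rho_0=(p_m/\kappa_0)^{1/\gamma}$) is the free parameter sweeping out the curve. For each such $U_0$ I would write $U_1=(p_1,u_1,S_0,a_1)$ for the right state produced by the stationary wave $S_0(U_1,U_0)$, recalling that the entropy is preserved across a stationary wave, so $\kappa_1=\kappa_0$.

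First I would isolate the combination $u_1p_1^{1/\gamma}$ and rewrite it using only two of the three relations in \eqref{2.13}. From the entropy relation $p_1=\kappa_0\rho_1^{\gamma}$ one gets $p_1^{1/\gamma}=\kappa_0^{1/\gamma}\rho_1$, whence $u_1p_1^{1/\gamma}=\kappa_0^{1/\gamma}(\rho_1u_1)$. The mass-flux relation $a_0\rho_0u_0=a_1\rho_1u_1$ with $u_0=u_m$ then gives $\rho_1u_1=a_0\rho_0u_m/a_1$, so that
\[
u_1p_1^{1/\gamma}=\frac{a_0u_m}{a_1}\,\kappa_0^{1/\gamma}\rho_0.
\]
The crux of the argument is the cancellation of the entropy parameter: since $U_0$ lies on the contact discontinuity we have $p_0=p_m$, and therefore $\kappa_0^{1/\gamma}\rho_0=(\kappa_0\rho_0^{\gamma})^{1/\gamma}=p_0^{1/\gamma}=p_m^{1/\gamma}$, which is independent of $S_0$. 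Hence $u_1p_1^{1/\gamma}=a_0u_mp_m^{1/\gamma}/a_1$ is one and the same constant for every $U_0\in J(U_m,U)$, so all the right states $U_1$ lie on a single curve $up^{1/\gamma}=\mathrm{const}$. To identify this constant with $u_+p_+^{1/\gamma}$ I would simply specialize to $U_0=U_m$, which is itself a point of $J(U_m,U)$; by \eqref{3.2} its stationary image is exactly $U_+$, so $U_1=U_+$ and the constant equals $u_+p_+^{1/\gamma}$. This shows $l(u_+,p_+)$ is precisely $up^{1/\gamma}=u_+p_+^{1/\gamma}$, as claimed.

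I expect the only subtlety, rather than a genuine obstacle, to be keeping track of which relations do the work. The Bernoulli relation (the second line of \eqref{2.13}) is never used in establishing membership in $l$; together with \eqref{2.14} and the two-solution structure of Lemma~\ref{lem2.1} it only selects which point of $l$ a given $U_0$ maps to (and whether that image is subsonic or supersonic), not the curve itself. The real content is recognizing that the correct invariant to compute is $up^{1/\gamma}=\kappa^{1/\gamma}(\rho u)$, and that, restricted to the contact discontinuity where $p$ is constant, the factor $\kappa^{1/\gamma}\rho$ collapses to $p^{1/\gamma}$, thereby decoupling the answer from the entropy jump carried across $J$.
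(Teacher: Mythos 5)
Your proof is correct and follows essentially the same route as the paper: both arguments use only the mass-flux relation and the entropy relation of the stationary wave, together with the constancy of $(u,p)$ across the contact discontinuity, to show $u_1p_1^{1/\gamma}$ is independent of the entropy parameter and equals $u_+p_+^{1/\gamma}$. The only difference is organizational --- the paper eliminates the densities by taking ratios of the $U_m\to U_+$ and $U_0\to U_1$ jump relations, while you compute the invariant explicitly as $a_0u_mp_m^{1/\gamma}/a_1$ and then identify it by specializing $U_0=U_m$ --- which is the same algebra arranged differently.
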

\begin{proof} On one hand, from the assumption, one has
\begin{equation}\label{3.3}
\left\{
\begin{array}{lll}
U_0\in J(U_m,U): \quad u_0=u_m>0, \quad p_0=p_m,\quad  \rho_0\neq \rho_m,\\[5pt]
U_1 \in S_0(U,U_0):\quad \left\{
\begin{array}{lll}
a_0 \rho_0 u_0=a_1 \rho_1 u_1,\\[5pt]
\displaystyle \frac{u_0^2}{2}+\frac{c_0^2}{\gamma-1}=\frac{u_1^2}{2}+\frac{c_1^2}{\gamma-1},\\[8pt]
\displaystyle \frac{p_0}{\rho_0^{\gamma}}=\frac{p_1}{\rho_1^{\gamma}}=\kappa. \\
\end{array}
\right.
\end{array}
\right.
\end{equation}
On the other hand, we have \eqref{3.2} holds. Combining \eqref{3.2} and \eqref{3.3} to yield
\begin{equation}\label{3.4}
\left\{
\begin{array}{ll}
\displaystyle \frac{\rho_m}{\rho_0}=\frac{\rho_+ u_+}{\rho_1 u_1},\\[12pt]
\displaystyle \left(\frac{\rho_0}{\rho_m}\right)^{\gamma}=\left(\frac{\rho_1}{\rho_+}\right)^{\gamma} \frac{p_+}{p_1},
\end{array}
\right.
\end{equation}
it follows the conclusion
\begin{equation}\label{3.5}
u_1p_1^{\frac1{\gamma}}= u_+ p_+^{\frac1{\gamma}}.
\end{equation}
\end{proof}

The key observation here is that when the left hand state $U_0\in J(U_m,U)$, the right hand state $U_1\in S_0(U,U_0)$ forms a curve which can be parameterized as a function of $\rho_0$. This becomes our starting point to investigate the interaction results. Since the states on the two sides of the stationary wave remain in one domain from lemma \ref{lem2.1}, it is natural to classify the interaction results according to the intermediate state $U_m$ is supersonic or subsonic, which is represented by

 %Besides, as $U_0=U_m$, $U_1=U_+$, the curve $S_0(U,U_0)$ passes through $U_+$
 
\begin{equation}\label{3.5'}
u_m>c_m~{\rm and}~\left\{\begin{array}{l}
u_->c_-,\\
u_-<c_-,
\end{array}\right.\quad{\rm or}\quad
u_m<c_m~{\rm and}~\left\{\begin{array}{l}
u_->c_-,\\
u_-<c_-.
\end{array}\right.
\end{equation}
We will discuss them case by case.

{\bf Construction 1.  $u_m>c_m$ and $u_->c_-$}.  \space In this case,  as the contact discontinuity touches the stationary wave, $U_-$ will jump to $U_{-*}$ first as both $W_1$ and $W_3$ have positive speeds. To further determine the interaction results, it is essential to judge the relative positions of $p_{-*}$ and $p_{+}$, which is given in the following lemma.

\begin{lem}\label{lem3.2}
When $U_m$ and $U_-$ are both supersonic, i.e., $u_m>c_m, u_->c_-$, we have
 \begin{equation}
 p_{-*}<p_+ ~~ {\rm if} ~~ \rho_-<\rho_m \quad {\rm and }
 \quad p_{-*}>p_+ ~~{\rm if} ~~~ \rho_->\rho_m. 
 \end{equation}
 
\end{lem}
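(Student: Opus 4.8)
The plan is to reduce the two–dimensional comparison of $U_{-*}$ and $U_+$ to a one–dimensional problem on the single curve $l(u_+,p_+)$, and then read off the ordering of the pressures from an energy balance. First I would invoke Lemma \ref{lem3.1}: since $U_-$ is joined to $U_m$ by the contact discontinuity $J$ (so $p_-=p_m$, $u_-=u_m$), the state $U_-$ plays exactly the role of a left–hand state $U_0\in J(U_m,U)$, and hence its supersonic stationary image $U_{-*}$ lies on $l(u_+,p_+)$, the very curve on which $U_+$ already sits. Along $l$ one has $u=u_+p_+^{1/\gamma}\,p^{-1/\gamma}$, a strictly decreasing function of $p$, so comparing $p_{-*}$ with $p_+$ is the same as comparing the corresponding velocities, and the whole question collapses onto locating two points on one monotone curve.

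Next I would bring in the remaining two stationary–wave relations. Writing $C=u_+p_+^{1/\gamma}$ and $c^2=\gamma\kappa^{1/\gamma}p^{(\gamma-1)/\gamma}$, the Bernoulli invariant restricted to $l$ at a fixed entropy $\kappa$ is $F_\kappa(p)=\tfrac12 C^2 p^{-2/\gamma}+\tfrac{\gamma}{\gamma-1}\kappa^{1/\gamma}p^{(\gamma-1)/\gamma}$. A short computation shows that $F_\kappa$ is strictly convex with its unique minimum at the sonic point $u=c$; in particular $F_\kappa$ is strictly decreasing on the supersonic branch. Because $U_-,U_m\in D_1$ and the global entropy condition (Lemma \ref{lem2.2}) keeps each stationary wave inside $\overline{D_1}$, the states $U_{-*}$ and $U_+$ are the supersonic roots of $F_{\kappa_-}(p)=B_-$ and $F_{\kappa_m}(p)=B_m$, where $B_-=\tfrac{u_m^2}{2}+\tfrac{c_-^2}{\gamma-1}$ and $B_m=\tfrac{u_m^2}{2}+\tfrac{c_m^2}{\gamma-1}$ are the two Bernoulli constants.

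The heart of the argument is a single comparison. Using that $U_-$ and $U_m$ share the same pressure and velocity while their densities differ, so that $\kappa_-^{1/\gamma}\rho_-=\kappa_m^{1/\gamma}\rho_m=p_m^{1/\gamma}$ and $c_-^2=\gamma p_m/\rho_-$, I would evaluate $F_{\kappa_-}$ at the known pressure $p_+$ and subtract $B_-$. Substituting the $U_m\to U_+$ energy balance to eliminate $u_+^2-u_m^2$, everything telescopes to
\[
B_- - F_{\kappa_-}(p_+)=\frac{\gamma}{\gamma-1}\Big(p_m-p_m^{1/\gamma}p_+^{(\gamma-1)/\gamma}\Big)\Big(\frac{1}{\rho_-}-\frac{1}{\rho_m}\Big).
\]
Since $F_{\kappa_-}$ is decreasing on the supersonic branch and $F_{\kappa_-}(p_{-*})=B_-$, one has $p_{-*}<p_+$ precisely when $B_-> F_{\kappa_-}(p_+)$, i.e.\ when the right–hand side is positive. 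The factor $\tfrac1{\rho_-}-\tfrac1{\rho_m}$ is controlled directly by the hypothesis on $\rho_-$ relative to $\rho_m$, and this is exactly what splits the conclusion into its two cases.

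The step I expect to be the main obstacle is pinning down the sign of the remaining factor $p_m-p_m^{1/\gamma}p_+^{(\gamma-1)/\gamma}$, equivalently of $p_m-p_+$: this amounts to knowing in which direction the stationary wave moves the pressure in the supersonic region $D_1$. I would settle it from the monotone area–pressure behaviour of supersonic stationary waves (in $\overline{D_1}$ the pressure and cross-section vary in opposite senses), together with the existence threshold $a\ge a_{\min}$ of Lemma \ref{lem2.1}, which guarantees that $U_{-*}$ is a genuine supersonic state and that $p_+$ still lies on the supersonic branch of the $\kappa_-$–curve, so that the monotonicity of $F_{\kappa_-}$ may legitimately be applied to both $p_{-*}$ and $p_+$. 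Once that sign is fixed, the two cases $\rho_-<\rho_m$ and $\rho_->\rho_m$ read off immediately from the displayed identity.
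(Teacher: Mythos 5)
Your proposal is correct in substance but follows a genuinely different route from the paper. The paper stays inside the one-parameter family of Lemma \ref{lem3.1}: it differentiates the stationary-wave relations \eqref{3.3} in the parameter $\rho_0$, arrives at the derivative formula \eqref{3.9} for ${\rm d}p_1/{\rm d}\rho_0$, checks that in the supersonic regime numerator and denominator are both positive (using $p_1<p_0=p_m$, hence $\rho_1<\rho_0$, and $u_1>c_1$), and then compares the two endpoints $\rho_0=\rho_-$ and $\rho_0=\rho_m$ by monotonicity. You instead make a purely algebraic comparison of the two points on $l(u_+,p_+)$ through the Bernoulli function $F_{\kappa}$; your identity
\[
B_- - F_{\kappa_-}(p_+)=\frac{\gamma}{\gamma-1}\Bigl(p_m-p_m^{1/\gamma}p_+^{(\gamma-1)/\gamma}\Bigr)\Bigl(\frac{1}{\rho_-}-\frac{1}{\rho_m}\Bigr)
\]
checks out exactly (using $c_-^2=\gamma p_m/\rho_-$ and $c_+^2=\gamma p_m^{1/\gamma}p_+^{(\gamma-1)/\gamma}/\rho_m$), and it factors the sign transparently while avoiding any differentiation along the family. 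Both arguments ultimately lean on the same external fact, namely that the supersonic stationary jump lowers the pressure ($p_+<p_m$); the paper invokes this just as briefly as you do, so you are on equal footing there. Two small repairs to your write-up. First, $F_\kappa$ is not globally convex (its second derivative is negative for large $p$); what is true, and what you actually use, is that $F_\kappa'$ vanishes only at the sonic pressure, so $F_\kappa$ is strictly decreasing on the supersonic branch and increasing on the subsonic one. Second, the branch-location worry you flag at the end is settled more cleanly by an entropy comparison than by $a\ge a_{\min}$: when $\rho_->\rho_m$ one has $\kappa_-<\kappa_m$, so the sonic pressure of the $\kappa_-$-curve on $l$ exceeds that of the $\kappa_m$-curve, which in turn exceeds $p_+$; hence $p_+$ lies on the decreasing branch of $F_{\kappa_-}$ and $F_{\kappa_-}(p_+)>B_-=F_{\kappa_-}(p_{-*})$ forces $p_+<p_{-*}$. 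In the opposite case $\rho_-<\rho_m$ no branch information about $p_+$ is needed at all: since $p_{-*}$ lies on the decreasing branch, every $p$ with $F_{\kappa_-}(p)<F_{\kappa_-}(p_{-*})$ must satisfy $p>p_{-*}$. With these two touches your argument is complete and is a legitimate alternative to the proof in the paper.
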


\begin{proof}
Assume that $U_0$ and $U_1$ are defined in lemma \ref{lem3.1}. First, differential \eqref{3.3} on both sides, one has

\begin{equation}\label{3.6}
\left\{
\begin{array}{l}
a_0 u_m {\rm d}\rho_0=a_1 \rho_1 {\rm d}u_1+a_1 u_1 {\rm d}\rho_1,\\[4pt]
\displaystyle \frac{\gamma}{\gamma-1}p_m(-\frac{1}{\rho_0^2}){\rm d}\rho_0=u_1{\rm d}u_1+\frac{\gamma}{\gamma-1}\frac{1}{\rho_1}{\rm d}p_1-\frac{\gamma}{\gamma-1}\frac{p_1}{\rho_1^2}{\rm d}\rho_1,\\[7pt]
\displaystyle  \gamma p_1\rho_0^{\gamma-1} {\rm d}\rho_0=\gamma p_m \rho_1^{\gamma-1}{\rm d}\rho_1-\rho_0^{\gamma}{\rm d}p_1. \\
\end{array}
\right.
\end{equation}
Insert the third equation of \eqref{3.3} to the third equation of \eqref{3.6}, one obtains
\begin{equation}\label{3.7}
{\rm d}\rho_1=\frac{\rho_1}{\gamma p_1}{\rm d}p_1+\frac{\rho_1}{\rho_0}{\rm d}\rho_0.
\end{equation}
Then by substituting \eqref{3.7} into the first two equations of  \eqref{3.6}, we have after arranging terms that
\begin{equation}\label{3.8}
\left\{
\begin{array}{l}
\displaystyle {\rm d}u_1=-\frac{u_1}{\gamma p_1}{\rm d}p_1,\\[9pt]
\displaystyle \frac{\gamma}{\gamma-1}\left(\frac{p_1}{\rho_0\rho_1}-\frac{p_m}{\rho_0^2}\right){\rm d}\rho_0=u_1{\rm d}u_1+\frac{1}{\rho_1}{\rm d}p_1. \\
\end{array}
\right.
\end{equation}
By eliminating ${\rm d}u_1$ in \eqref{3.8}, one gets
\begin{equation}\label{3.9}
\frac{{\rm d}p_1}{{\rm d}\rho_0}=\frac{\gamma^2 p_m p_1(\rho_0^{\gamma-1}-\rho_1^{\gamma-1})}{(\gamma-1)\rho_0^{\gamma+1}(u_1^2-c_1^2)}.
\end{equation}
From the assumption, one one hand, we have $u_0>c_0$ since $U_0\in J(U_m,U_-)$, which indicates that $p_1<p_0=p_m$ from the property of stationary wave. On the other hand, \eqref{3.3} tells $\displaystyle \frac{p_0}{\rho_0^{\gamma}}=\frac{p_m}{\rho_0^{\gamma}}=\frac{p_1}{\rho_1^{\gamma}}$, it follows that $\rho_1<\rho_0$. Thus from \eqref{3.9}, we have $\displaystyle \frac{{\rm d}p_1}{{\rm d}\rho_0}>0$ in this case. Besides, as $U_0=U_m$, $U_1=U_+$, we conclude as follows.\\
1) If $\rho_0=\rho_-<\rho_m$, then $p_1=p_{-*}<p_+$, \\
2) If $\rho_0=\rho_->\rho_m$, then $p_1=p_{-*}>p_+$.\\
Thus we prove the lemma.
\end{proof}

Based on lemma \ref{3.2}, we now give the interaction results as follows.

\begin{lem}\label{lem3.3} When $U_m$ and $U_-$ are both supersonic, $U_-$ jumps to $U_{-*}$ as the contact discontinuity touches the stationary wave. More specifically:

\noindent
Case 1. $\rho_->\rho_m$, the interaction results have two subcases (see Fig. 3.1.):

Subcase 1. If $\displaystyle u_{-*}+\frac{2}{\gamma-1}c_{-*}>u_{+}-\frac{2}{\gamma-1}c_{+}$, then the result is 
\begin{equation}
J(U_m,U_-)\oplus S_0(U_+,U_m)\rightarrow S_0(U_{-*},U_-) \oplus R_1(U_2,U_{-*})  \oplus J(\overline{U}_2,U_2) \oplus R_3(U_+,\overline{U}_2).
\end{equation}

Subcase 2.  If $\displaystyle u_{-*}+\frac{2}{\gamma-1}c_{-*}\leq u_{+}-\frac{2}{\gamma-1}c_{+}$, there exists a vacuum. The result is 
\begin{equation}
 J(U_m,U_-)\oplus S_0(U_+,U_m)\rightarrow S_0(U_{-*},U_-) \oplus R_1({\rm Vacuum},U_{-*})  \oplus  R_3(U_+,{\rm Vacuum}).
\end{equation}

\noindent
Case 2. $\rho_-<\rho_m$, the interaction result is (see Fig. 3.2.):
\begin{equation}
J(U_m,U_-)\oplus S_0(U_+,U_m)\rightarrow S_0(U_{-*},U_-) \oplus S_1(U_3,U_{-*})  \oplus J(\overline{U}_3,U_3) \oplus S_3(U_+,\overline{U}_3).
\end{equation}
Here $``\oplus"$ means ``follows".
%Case 3. When $u_-<c_-$, the interaction result is (see Fig. 3.3.):
%$$
%J(U_m,U_-)\oplus S_0(U_+,U_m)\rightarrow   R_1(U_C,U_-) \oplus S_0(U_{C*},U_C) \oplus W_1(U_4,U_{C*}) \oplus J(\overline{U}_4,U_4) \oplus S_3(U_+,\overline{U}_4).
%$$
\end{lem}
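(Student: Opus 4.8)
The plan is to reduce the collision to a classical Riemann problem for the gas-dynamics system \eqref{2.11} and then read the pattern off the monotone wave curves in the $(u,p)$-plane. First I would show that the leading wave is the stationary wave $S_0(U_{-*},U_-)$. Because $U_-\in D_1$ is supersonic, Lemma~\ref{lem2.2} keeps its stationary image $U_{-*}$ in $\overline{D_1}$; and in $D_1$ one has $\lambda_1>\lambda_4=0$ together with $\sigma_1,\sigma_3>0$ by \eqref{2.20} and \eqref{2.21}, so every wave born at the interaction point moves to the right of the motionless stationary wave. Once $U_-$ has crossed to $U_{-*}$ the cross-section equals $a_1$ on both sides, and what remains is the ordinary Riemann problem for \eqref{2.11} with left state $U_{-*}$ and right state $U_+$.

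Second, I would pin down the relative position of the two states. Lemma~\ref{lem3.2} gives the pressure ordering $p_{-*}>p_+$ in Case~1 ($\rho_->\rho_m$) and $p_{-*}<p_+$ in Case~2 ($\rho_-<\rho_m$), while Lemma~\ref{lem3.1} puts both $U_{-*}$ and $U_+$ on the curve $l:\ up^{1/\gamma}=u_+p_+^{1/\gamma}$, so that $u_{-*}<u_+$ in Case~1 and $u_{-*}>u_+$ in Case~2. To solve the Riemann problem I would introduce the scalar $\phi(p)=u_{W_1}(p)-u_3(p)$, where $u_{W_1}(p)$ is the velocity on the forward $1$-curve $W_1(\cdot;U_{-*})$ at pressure $p$ and $u_3(p)$ the velocity at pressure $p$ on the backward $3$-curve through $U_+$ (the locus of $V$ with $U_+\in W_3(\cdot;V)$). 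Since $W_1$ is strictly decreasing and the backward $3$-curve strictly increasing, $\phi$ is strictly decreasing, hence has a unique zero $p^{*}$, the intermediate pressure; the $1$-wave is a rarefaction or a shock according as $p^{*}<p_{-*}$ or $p^{*}>p_{-*}$, and the $3$-wave likewise according as $p^{*}<p_+$ or $p^{*}>p_+$.

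Third comes the identification of the wave types, which is governed by the single sign of $\phi(p_+)$. The pressure ordering already fixes the $1$-wave: evaluating $\phi(p_{-*})=u_{-*}-u_3(p_{-*})$ and using $u_{-*}<u_+$ (Case~1) or $u_{-*}>u_+$ (Case~2) together with the monotonicity of the backward $3$-curve gives $\phi(p_{-*})<0$, hence $R_1$, in Case~1 and $\phi(p_{-*})>0$, hence $S_1$, in Case~2. The $3$-wave is the delicate point, and I would settle it by comparing $l$ with $W_1$ at their common point $U_{-*}$: differentiating gives, in the $(u,p)$-plane, $|du/dp|_{l}=u_{-*}/(\gamma p_{-*})$ against $|du/dp|_{W_1}=c_{-*}/(\gamma p_{-*})$ (the shock branch being tangent to the rarefaction branch at its base), so supersonicity $u_{-*}>c_{-*}$ makes $l$ strictly steeper than $W_1$. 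Propagating this slope inequality from $p_{-*}$ to $p_+$ yields $u_+=u_l(p_+)>u_{W_1}(p_+)$, i.e. $\phi(p_+)<0$ and $p^{*}<p_+$, in Case~1 (a $3$-rarefaction $R_3$), and $u_+<u_{W_1}(p_+)$, i.e. $\phi(p_+)>0$ and $p^{*}>p_+$, in Case~2 (a $3$-shock $S_3$). Combined with the $1$-wave, this is exactly $S_0\oplus R_1\oplus J\oplus R_3$ and $S_0\oplus S_1\oplus J\oplus S_3$.

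Finally I would dispose of the vacuum alternative in Case~1. Letting $p\to0^+$ along the two rarefaction branches gives $\phi(0^+)=\big(u_{-*}+\tfrac{2c_{-*}}{\gamma-1}\big)-\big(u_+-\tfrac{2c_+}{\gamma-1}\big)$; together with $\phi(p_+)<0$, a positive value produces a genuine root $p^{*}\in(0,p_+)$ and hence the two-rarefaction Subcase~1, whereas $\phi(0^+)\le0$ forces $\phi<0$ throughout and the two rarefactions are separated by a vacuum, which is Subcase~2. This reproduces the stated threshold $u_{-*}+\tfrac{2c_{-*}}{\gamma-1}\gtrless u_+-\tfrac{2c_+}{\gamma-1}$. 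The step I expect to be the main obstacle is upgrading the slope comparison from infinitesimal to global: one must verify that $l$ stays steeper than $W_1$ over the whole interval between $p_{-*}$ and $p_+$ — equivalently that $u_l(p)>c(p)$ persists and that the shock branch retains the ordering in Case~2 — which relies on the monotonicity of $u_l$ and of the sound speed in $p$ and on the convexity of the shock curve established after \eqref{2.19}. The remaining computations are routine applications of the monotone wave-curve machinery.
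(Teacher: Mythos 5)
Your overall plan---stationary jump $S_0(U_{-*},U_-)$ first, then a classical Riemann problem for \eqref{2.11} with data $(U_{-*},U_+)$, with Lemma \ref{lem3.1} and Lemma \ref{lem3.2} fixing the relative position of the two states---is exactly the paper's argument, and your Case 1 (including the vacuum threshold from the rarefaction asymptotics) coincides with it: the slope comparison \eqref{3.11} shows $l$ lies above $R_1(U,U_{-*})$ for $p<p_{-*}$, so $U_+\in l$ lies above that curve, giving $p^{*}<p_{+}<p_{-*}$ and the two-rarefaction pattern. (Your separate identification of the $1$-wave via $\phi(p_{-*})$ is correct but redundant: once $p^{*}<p_{+}<p_{-*}$ is known, the $1$-wave type follows from the pressure ordering alone; likewise in Case 2.)

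The genuine gap is in Case 2, at precisely the step you flag. Downward propagation of \eqref{3.11} (Case 1) is safe because everything stays supersonic as $p$ decreases; upward propagation along the shock branch is not, and it cannot be repaired by monotonicity and convexity, because $S_1(U,U_{-*})$ and $l$ really do cross at some finite pressure: from \eqref{2.18}, $u_{S_1}(p)=u_{-*}-(p-p_{-*})\sqrt{(1-\mu^2)\tau_{-*}/(p+\mu^2 p_{-*})}\sim u_{-*}-\sqrt{(1-\mu^2)\tau_{-*}\,p}\to-\infty$, while $u_l(p)=u_{-*}p_{-*}^{1/\gamma}p^{-1/\gamma}\to 0^{+}$. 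So any slope-propagation argument must also show that $p_+$ lies \emph{below} the crossing pressure, which your proposal does not address. Nor does persistence of $u_l>c$ suffice: the Gronwall-type comparison needs $|{\rm d}u/{\rm d}p|_{S_1}<u/(\gamma p)$ along the shock curve, whereas the computation \eqref{3.15}--\eqref{3.20} shows $|{\rm d}u/{\rm d}p|_{S_1}$ already exceeds the rarefaction-field magnitude $c/(\gamma p)$, so supersonicity $u>c$ at a point of $S_1$ decides nothing; moreover that computation itself requires $1<\gamma\le 2$ (it is the reason Lemma \ref{lem3.4}, Case 4 carries that restriction), while the present Case 2 is asserted for all $1<\gamma<3$. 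The paper closes this step with a different, global tool: the non-penetration property of $1$-family wave curves from \cite{ChangHsiao3}. Since $U_{-*}$ lies strictly to the right of $W_1(U,U_+)$ (apply \eqref{3.11} on the supersonic arc between $p_{-*}$ and $p_+$), and $S_1(U,U_{-*})$ cannot penetrate $W_1(U,U_+)$, one gets $u_{S_1}(p_+;U_{-*})>u_+$, hence $p_3>p_+>p_{-*}$ and the pattern $S_0\oplus S_1\oplus J\oplus S_3$. Your propagation step should be replaced by this (or an equivalently global) argument.
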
 

\begin{figure}[htbp]
\subfigure{
\begin{minipage}[t]{0.4\textwidth}
\centering
\includegraphics[width=0.95\textwidth]{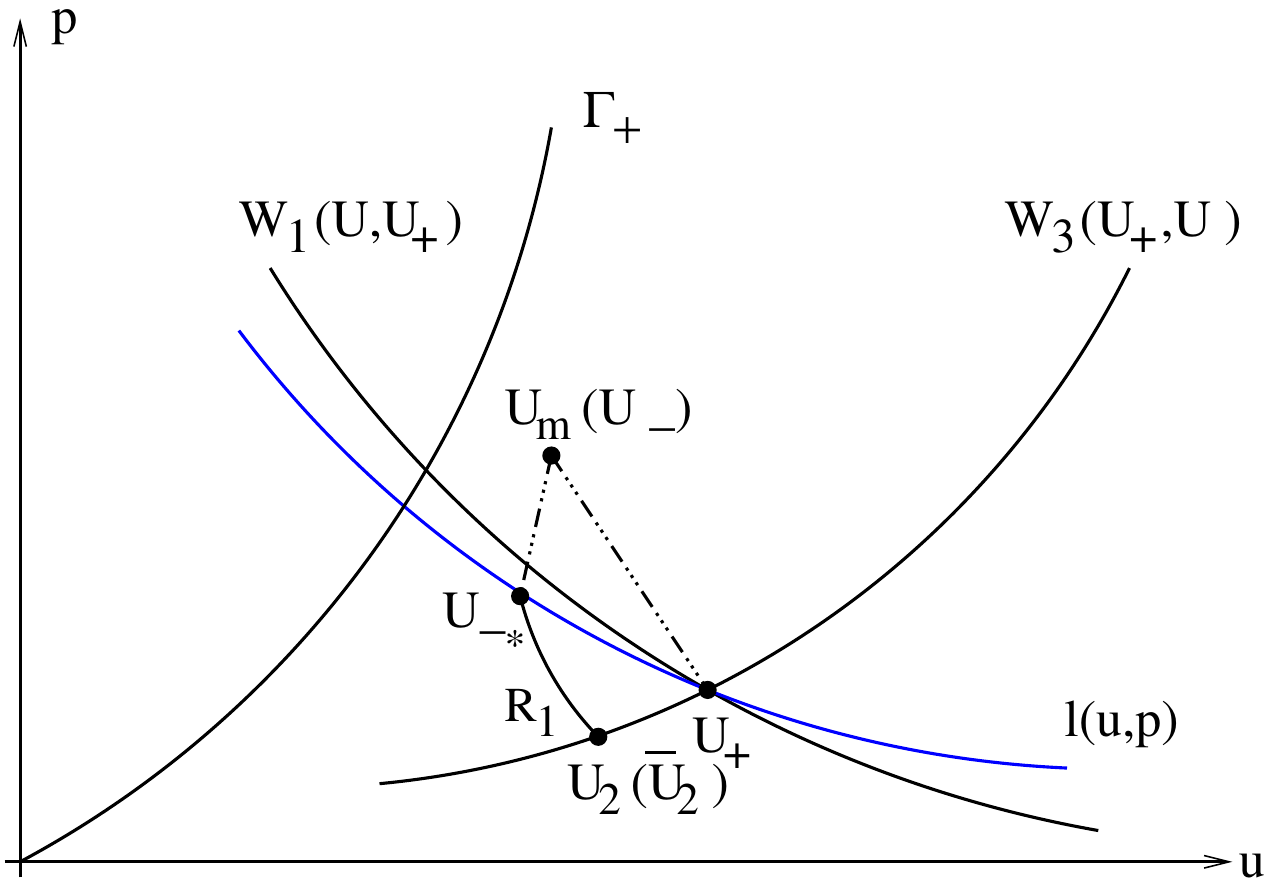}
\end{minipage}
}
\subfigure{
\begin{minipage}[t]{0.4\textwidth}
\centering
\includegraphics[width=0.95\textwidth]{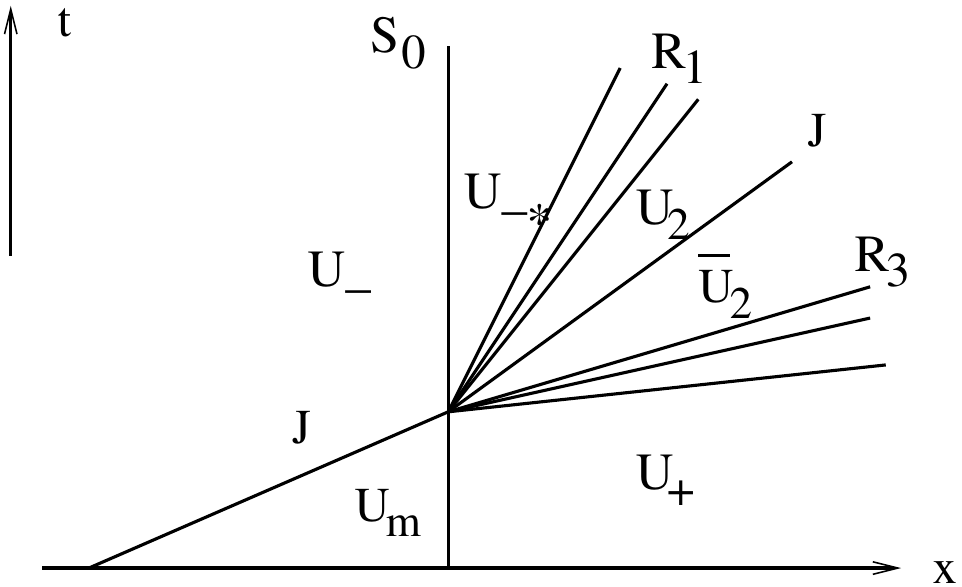}
\end{minipage}
}
\caption*{Fig. 3.1. Case 1. $u_m>c_m, u_->c_-$ and $\rho_->\rho_m$.}
\end{figure}

\begin{proof}
\emph {The proof of case} 1. First, from lemma \ref{lem3.2}, we have $p_{-*}>p_+$ as $\rho_->\rho_m$. Then, from lemma \ref{lem3.1}, $p_{-*}$ is on $l(u_+,p_+): up^{\frac1{\gamma}}=u_+p_+^{\frac1{\gamma}}$. We conclude that $R_1(U,U_{-*})$ intersects with $R_3(U_+,U)$ at $U_2$ if it holds $\displaystyle u_{-*}+\frac{2}{\gamma-1}c_{-*}\ge u_{+}-\frac{2}{\gamma-1}c_{+}$, see Fig. 3.1. To prove that, it is enough to compare the relative positions of $R_1(u,p)$ and $l(u,p)$. Since one has
\begin{equation}\label{3.10}
\left\{
\begin{array}{l}
\displaystyle l(u,p):~~~~~~\frac{{\rm d}u}{{\rm d}p}\big|_{(u,p)}=-\frac{u}{\gamma p},\\[7pt]
\displaystyle R_1(u,p): ~\frac{{\rm d}u}{{\rm d}p}\big |_{(u,p)}=-\frac{1}{\rho c},
\end{array}
\right.
\end{equation}
it follows that 
\begin{equation}\label{3.11}
\displaystyle \frac{{\rm d}u}{{\rm d}p}\big |_{R_1(u,p)}-\frac{{\rm d}u}{{\rm d}p}\big|_{l(u,p)}=\frac{u}{\gamma p}-\frac{1}{\rho c}=\frac{u-c}{\rho c^2}>0.
\end{equation}
Thus $l(u,p)$ is above the curve $R_1(U,U_{-*})$ as $p<p_{-*}$. See Fig. 3.1(left). 

The interaction result in this case is: $U_-$ jumps to $U_{-*}$ by stationary wave, $U_{-*}$ reaches to $U_2$ by a backward rarefaction wave, followed by a contact discontinuity from $U_2$ to $\overline{U}_2$, then followed by a forward rarefaction wave from $\overline{U}_2$ to $U_+$. That is
\begin{equation}
J(U_m,U_-)\oplus S_0(U_+,U_m)\rightarrow S_0(U_{-*},U_-) \oplus R_1(U_2,U_{-*})  \oplus J(\overline{U}_2,U_2) \oplus R_3(U_+,\overline{U}_2).
\end{equation}

 If $\displaystyle u_{-*}+\frac{2}{\gamma-1}c_{-*}< u_{+}-\frac{2}{\gamma-1}c_{+}$,  then $R_1(U,U_{-*})\cap R_3(U_+,U)$ =$\emptyset$. $U_2$ turns to a vacuum, so as $\overline{U}_2$. The result is 
\begin{equation}
J(U_m,U_-)\oplus S_0(U_+,U_m)\rightarrow S_0(U_{-*},U_-) \oplus R_1({\rm Vacuum},U_{-*})  \oplus  R_3(U_+,{\rm Vacuum}).
\end{equation}

\begin{figure}[htbp]
\subfigure{
\begin{minipage}[t]{0.4\textwidth}
\centering
\includegraphics[width=0.95\textwidth]{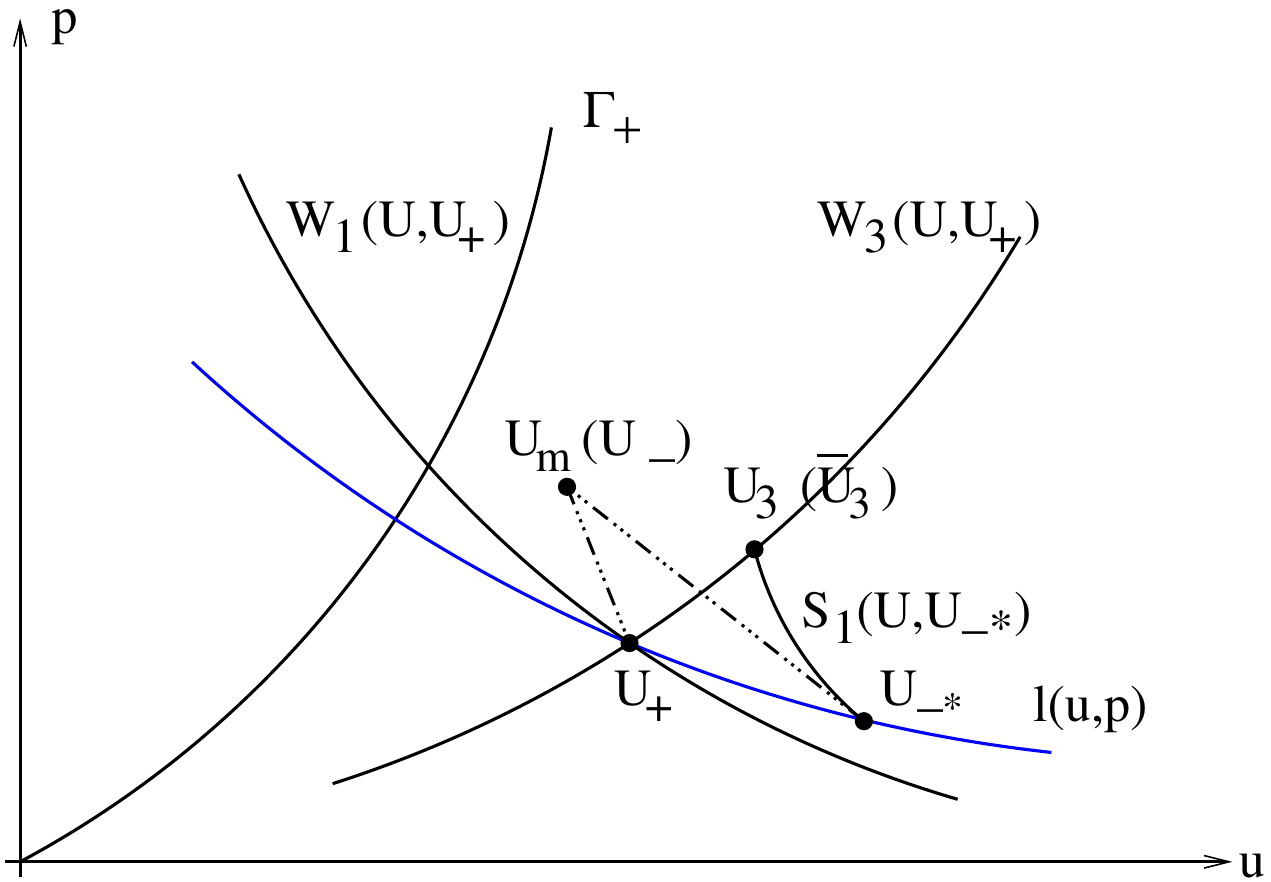}
\end{minipage}
}
\subfigure{
\begin{minipage}[t]{0.4\textwidth}
\centering
\includegraphics[width=0.95\textwidth]{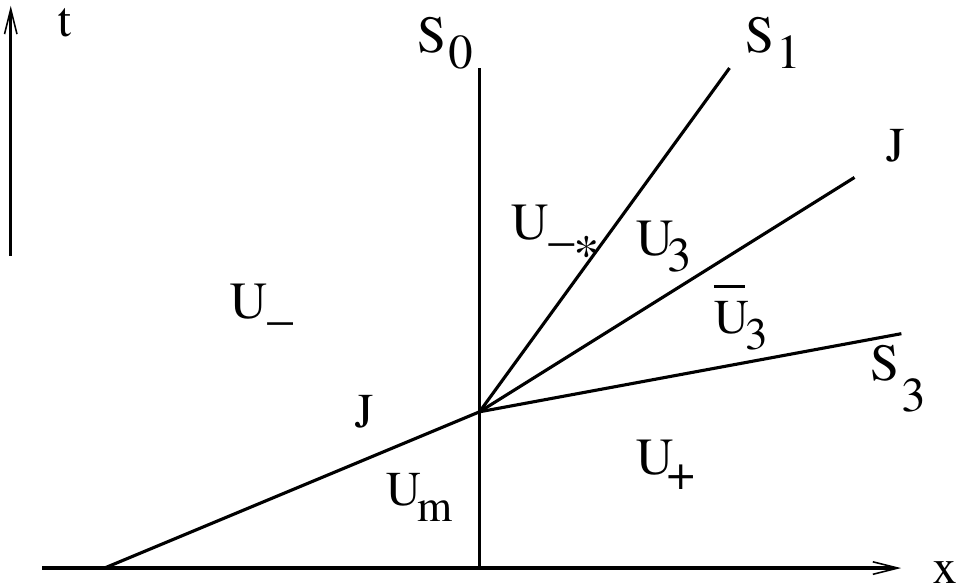}
\end{minipage}
}
\caption*{Fig. 3.2. Case 2. $u_m>c_m, u_->c_-$ and $\rho_-<\rho_m$.}
\end{figure}

\noindent
\emph {The proof of case} 2. First, from lemma \ref{lem3.2}, we have $p_{-*}<p_+$ as $\rho_-<\rho_m$, see Fig. 3.2. Denote that $\{U_3\}=S_1(U,U_{-*})\cap W_3(U,U_+)$, then we conclude that $p_3>p_+$
since $S_1(U,U_{-*})$ will not penetrate $W_1(U,U_+)$ in this case \cite{ChangHsiao3}. 

The interaction result is: $U_-$ jumps to $U_{-*}$ by stationary wave, $U_{-*}$ and $U_3$ are connected by a backward shock wave, followed by a contact discontinuity from $U_3$ to $\overline{U}_3$,  then followed by a forward shock wave from $\overline{U}_3$ to $U_+$. That is
\begin{equation}
J(U_m,U_-)\oplus S_0(U_+,U_m)\rightarrow S_0(U_{-*},U_-) \oplus S_1(U_3,U_{-*})  \oplus J(\overline{U}_3,U_3) \oplus S_3(U_+,\overline{U}_3).
\end{equation}
\end{proof}
\noindent
{\bf Remark 1.}   It is worthing to note that we consider the polytropic gas 
\eqref{2.1} here. For more general equations of state, such as the Chaplygin gas or the van der Waals gas, if $S_1(U,U_{-*})\cap W_3(U,U_+)=\emptyset$ in case 2, then a delta shock wave solution is needed. We left it for the future considerations.

{\bf Construction 2.  $u_m>c_m$ and $u_-<c_-$}.  \space This is a transonic case. The interaction results are obtained by solving a new Riemann problem with the initial data $U_-$ and $U_+$ once the contact discontinuity touches the stationary wave. For the details, we refer to \cite{LeflochThanh12,Thanh17}. The solution begins with a backward rarefaction wave from $U_-$ to a sonic point $U_C\in \Gamma_+$, followed by a stationary jump from $U_C$ to $U_{C*}$, then followed by a backward wave $W_1(U,U_{C*})$ from $U_{C*}$ to $U_4$, $U_4$ jumps to $\overline{U}_4$ by a contact discontinuity,  finally followed by a forward wave from  $\overline{U}_4$ to $U_+$. See Fig.3.3. Similarly, there exists a vacuum when 
$W_1(U,U_{C*})\cap W_3(U_+,U)=\emptyset$ in this case.

\begin{figure}[htbp]
\subfigure{
\begin{minipage}[t]{0.4\textwidth}
\centering
\includegraphics[width=0.95\textwidth]{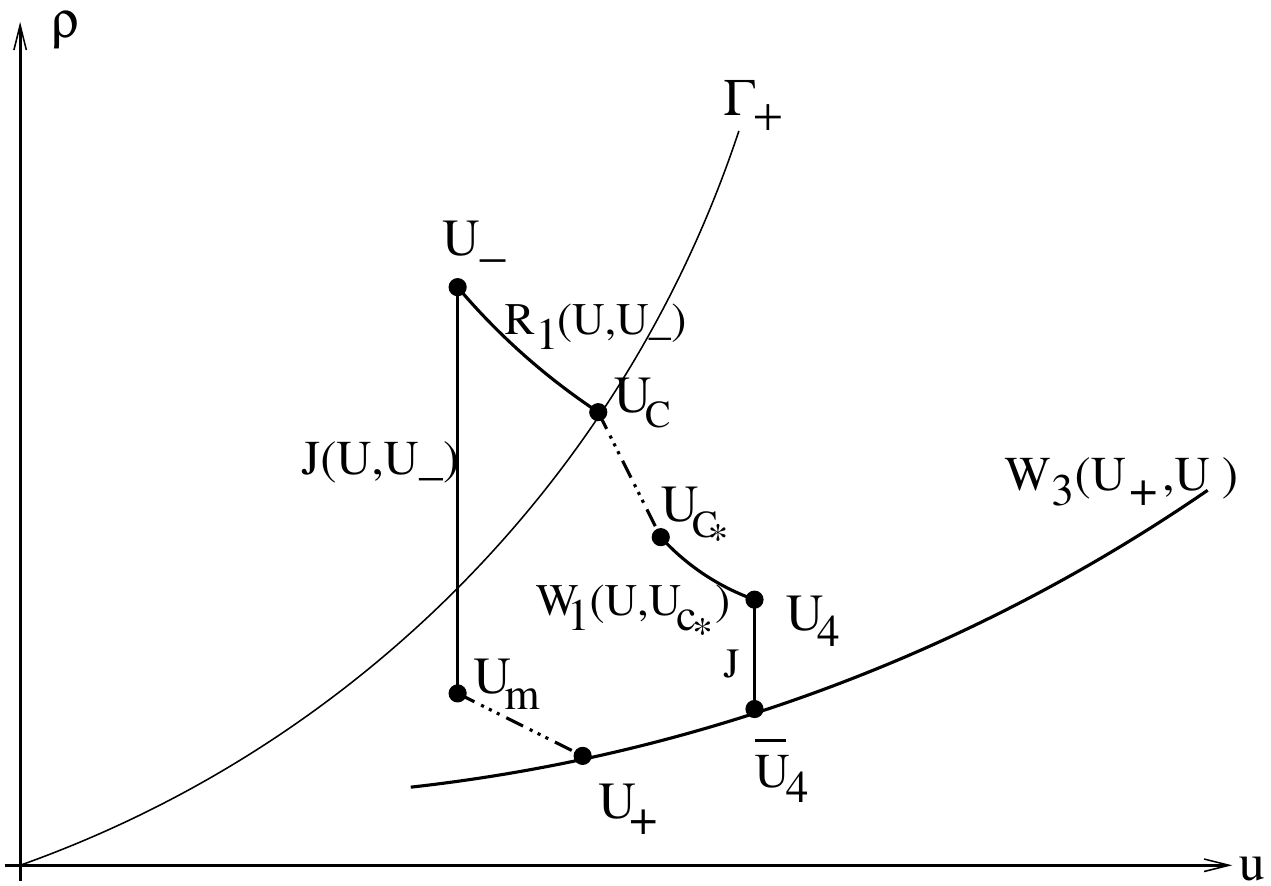}
\end{minipage}
}
\subfigure{
\begin{minipage}[t]{0.4\textwidth}
\centering
\includegraphics[width=0.95\textwidth]{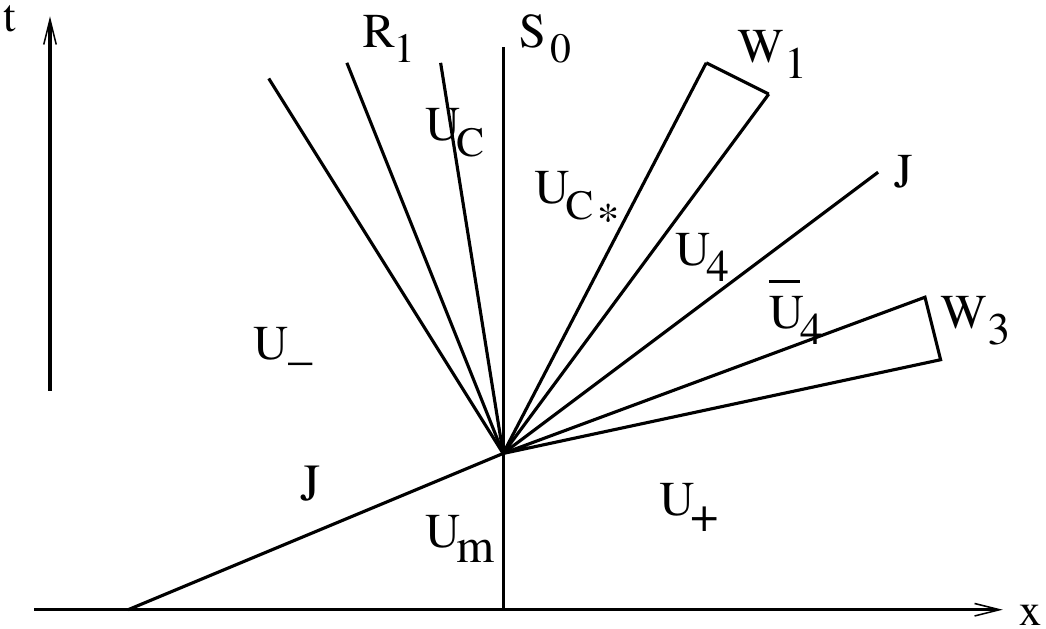}
\end{minipage}
}
\caption*{Fig. 3.3. Case $u_m>c_m$ and $u_-<c_-$.}
\end{figure}

{\bf Construction 3.  $u_m<c_m$ and $u_-<c_-$}.  \space  Now we turn to the case that $U_m$ is subsonic. First we consider that the left-hand state $U_-$ is also subsonic.  As the contact discontinuity touches the stationary wave, $U_-$ will first pass through a backward wave $W_1(U,U_-)$, which is different from the supersonic case. This indicates us to define a curve $ S_0(\overline{U}^{*},\overline{U})$ in the $(u,p)$ plane:
\begin{equation}\label{3.12}
 S_0(\overline{U}^{*},\overline{U}):  \quad \overline{U} \in W_1(U,U_-), \quad \overline{U}^{*} \in S_0(U,\overline{U}).
\end{equation}
It is obviously that $S_0(\overline{U}^{*},\overline{U})$ starts from $\overline{U}^{*}=U_{-}^{*}$ as $\overline{U}=U_-$. By using \eqref{3.9}, one
can discuss similarly as lemma \ref{lem3.2} to obtain that 
\begin{equation}
p_{-}^{*}>p_+ ~~ {\rm if} ~~ \rho_->\rho_{m}  \quad {\rm and} \quad 
p_{-}^{*}<p_+ ~~ {\rm if} ~~ \rho_-<\rho_{m}.
\end{equation}
In fact, from the stationary wave solution, one has $p_1>p_0=p_m$ in this case, which further indicates $\rho_1>\rho_0$ as $\displaystyle \frac{p_0}{\rho_0^{\gamma}}=\frac{p_1}{\rho_1^{\gamma}}$. Thus it follows that $\displaystyle \frac{{\rm d}p_1}{{\rm d}\rho_0}>0$ from \eqref{3.9}. 

Based on the relative positions of $\rho_-$ and $\rho_m$, we discuss the interaction results as follows.

\begin{lem}\label{lem3.4}
When $U_m$ and $U_-$ are both subsonic, $U_-$ first pass through a backward wave as the contact discontinuity touches the stationary wave. More specifically:

\noindent
Case 3.  $\rho_->\rho_m$, the interaction result is (see Fig. 3.4.):
\begin{equation}
J(U_m,U_-)\oplus S_0(U_+,U_m)\rightarrow R_1(U_2,U_-) \oplus S_0(U_2^{*},U_2)  \oplus J(\overline{U}_2^{*},U_2^{*})  \oplus S_3(U_+,\overline{U}_2^{*}).
\end{equation}
Case 4.  $\rho_-<\rho_m$, the interaction result as $1< \gamma\leq 2$ is (see Fig. 3.5.):
\begin{equation}
J(U_m,U_-)\oplus S_0(U_+,U_m)\rightarrow S_1(U_3,U_-) \oplus S_0(U_3^{*},U_3) \oplus J(\overline{U}_3^{*},U_3^{*}) \oplus W_3(U_+,\overline{U}_3^{*}).
\end{equation}
\end{lem}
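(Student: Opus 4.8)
The plan is to reduce both cases, exactly as in the supersonic Construction~1, to locating a single intersection point in the $(u,p)$ plane. Because the contact discontinuity $J$ leaves $(u,p)$ unchanged and, by Lemma~\ref{lem3.1}, the stationary image of the $J$-family is pinned to the curve $l$, the unknown interaction reduces to finding the state $\overline{U}^{\,*}$ lying simultaneously on the image curve $S_0(\overline{U}^{\,*},\overline{U})$ of \eqref{3.12} --- swept out as the pre-image $\overline{U}$ runs along the backward wave curve $W_1(U,U_-)$ issuing from $U_-$ --- and on the backward $3$-wave curve through $U_+$. Having found $\overline{U}^{\,*}$, the entropy jump is absorbed by inserting $J$ between the stationary jump and the closing $3$-wave, reproducing the four-wave pattern in the statement.

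First I would record the orientation data. The paragraph preceding the lemma already uses \eqref{3.9}: in the subsonic regime $u_1^2-c_1^2<0$ together with $p_1>p_m$ forces $\rho_1>\rho_0$, so $\mathrm{d}p_1/\mathrm{d}\rho_0>0$ along $S_0(\overline{U}^{\,*},\overline{U})$. Comparing the images of $U_-$ and $U_m$ (which share the same $(u,p)$ but differ in density) then yields $p_-^{\,*}>p_+$ when $\rho_->\rho_m$ and $p_-^{\,*}<p_+$ when $\rho_-<\rho_m$. Since $W_1(U,U_-)$ is $R_1$ below $p_-$ and $S_1$ above it, the sign of $p_-^{\,*}-p_+$ fixes the leading wave: a backward rarefaction in Case~3 and a backward shock in Case~4.

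For Case~3 ($\rho_->\rho_m$, $p_-^{\,*}>p_+$) I would argue by the slope comparison of \eqref{3.10}--\eqref{3.11}. The image curve departs from $U_-^{\,*}\in l$, which sits strictly above $U_+$ in pressure; running $\overline{U}$ down the rarefaction branch $R_1$, I would show that $S_0(\overline{U}^{\,*},\overline{U})$ descends monotonically and, by the strict convexity/concavity of $W_1$ and $W_3$ recalled in Section~\ref{sec2}, meets the backward $3$-wave curve through $U_+$ in exactly one point $U_2^{\,*}$ with $p_{U_2^{\,*}}>p_+$. Because the crossing occurs above $p_+$, the state $\overline{U}_2^{\,*}$ (equal to $U_2^{\,*}$ in $(u,p)$) is joined to $U_+$ by a forward shock $S_3$, so the sequence is $R_1\oplus S_0\oplus J\oplus S_3$; uniqueness follows from the strict monotonicity of the two curves.

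Case~4 ($\rho_-<\rho_m$, $p_-^{\,*}<p_+$) carries the real difficulty, and I expect the monotone-intersection step to be the main obstacle. Now the leading wave is a backward shock $S_1(U,U_-)$, along which the density ratio saturates at $1/\mu^2$ and the velocity decreases without bound, so before concluding I must check, along the whole relevant arc, that the stationary jump still exists, i.e.\ $a_1\ge a_{\rm min}(\overline{U})$ with $\overline{U}^{\,*}$ staying in the closure of one subsonic domain (Lemma~\ref{lem2.2}), and that the image curve climbs high enough to reach the backward $3$-wave curve through $U_+$. Tracking $a_{\rm min}(\overline{U})$ and the sonic factor $u_1^2-c_1^2$ along the shock, together with $\mathrm{d}p_1/\mathrm{d}\rho_0>0$, is precisely the computation that forces $1<\gamma\le 2$: it is this range that keeps $a_{\rm min}(\overline{U})\le a_1$ and preserves the monotonicity guaranteeing a single crossing $U_3^{\,*}$, whereas for $\gamma>2$ the image can exit the subsonic domain before the curves meet and the closing wave need not be a simple $W_3$. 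Once the crossing is secured the closing $3$-wave is read off from the sign of $p_{U_3^{\,*}}-p_+$ (hence the generic $W_3$ rather than a definite $S_3$), and the placement of $J$ is identical to Case~3.
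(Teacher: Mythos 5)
Your skeleton matches the paper's: the orientation step via \eqref{3.9} (giving $p_-^{*}>p_+$ iff $\rho_->\rho_m$, hence $R_1$ leading in Case 3 and $S_1$ leading in Case 4), the image curve \eqref{3.12}, and the insertion of $J$ to absorb the entropy jump. But both of your key steps have genuine gaps. In Case 3, your reason for the crossing lying at $p_{U_2^{*}}>p_+$ --- ``strict convexity/concavity of $W_1$ and $W_3$'' plus monotone descent of the image curve --- does not suffice: the image curve is not $W_1$, and a curve descending monotonically from $U_-^{*}$ (where $u_-^{*}<u_+$, since $U_-^{*}$ sits on the decreasing curve $l(u_+,p_+)$ above $U_+$) could perfectly well first meet $W_3(U_+,U)$ on its rarefaction branch $p<p_+$, for instance if it dropped nearly vertically. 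What actually pins the crossing on the shock branch is the paper's \eqref{3.13}--\eqref{3.14}: the quantity $aup^{1/\gamma}$ is invariant across a stationary jump (both $a\rho u$ and $\kappa=p/\rho^{\gamma}$ are conserved, and $p^{1/\gamma}=\kappa^{1/\gamma}\rho$), so every image point satisfies $a_1u^{*}(p^{*})^{1/\gamma}=a_0up^{1/\gamma}>a_0u_-p_-^{1/\gamma}=a_1u_+p_+^{1/\gamma}$, where the middle inequality comes from \eqref{3.11} ($R_1(U,U_-)$ lies above $l(u_-,p_-)$ for $p<p_-$) and the last equality from Lemma~\ref{lem3.1}. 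Hence the whole image curve stays strictly above $l(u_+,p_+)$, and the portion of $W_3(U_+,U)$ above that curve is exactly its shock branch $p>p_+$; that is why the closing wave is $S_3$. Your proposal never invokes this stationary invariant, which is the crux of the argument.

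In Case 4 you misidentify both the role of $1<\gamma\le 2$ and the conclusion. The paper does not use that condition for existence of the stationary jump or for tracking $a_{\rm min}$; it uses it to show that $S_1(U,U_-)$ lies below $l(u_-,p_-)$ for $p>p_-$, so that the same invariant argument runs with the inequality reversed. Since $S_1$ versus $l$ is hard to compare directly, the paper compares the slopes of $S_1$ and $R_1$ in \eqref{3.15}--\eqref{3.17}, reducing to the sign of the cubic $g(x)$ in \eqref{3.20} with $x=p/p_->1$, for which $g(1)=g'(1)=0$ and $g'(x)<0$ for $x>1$ exactly when $1<\gamma\le 2$; combined with \eqref{3.11} this yields $up^{1/\gamma}<u_-p_-^{1/\gamma}$ along $S_1$, hence $u_3^{*}(p_3^{*})^{1/\gamma}<u_+p_+^{1/\gamma}$ and $p_3^{*}<p_+$. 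So the closing wave is in fact determined: the paper's proof (and its Test 5) conclude a forward rarefaction $R_3$, not an undetermined $W_3$. Your version leaves the sign of $p_{U_3^{*}}-p_+$ open and attributes the $\gamma$-restriction to an unexecuted computation about $a_{\rm min}$ and the sonic factor, so the actual content of Case 4 --- which forward wave closes the pattern --- is not established.
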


\begin{figure}[h]
\subfigure{
\begin{minipage}[t]{0.4\textwidth}
\centering
\includegraphics[width=0.95\textwidth]{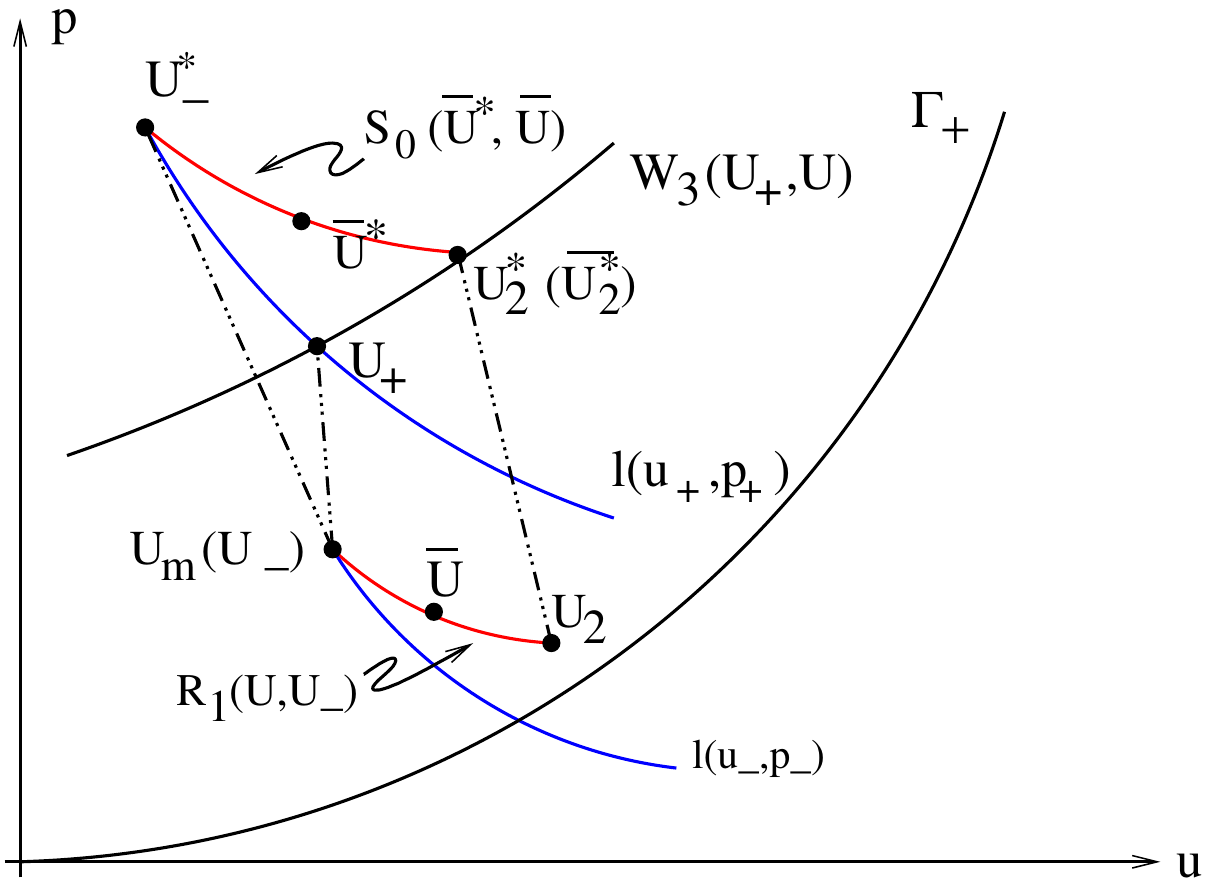}
\end{minipage}
}
\subfigure{
\begin{minipage}[t]{0.4\textwidth}
\centering
\includegraphics[width=0.95\textwidth]{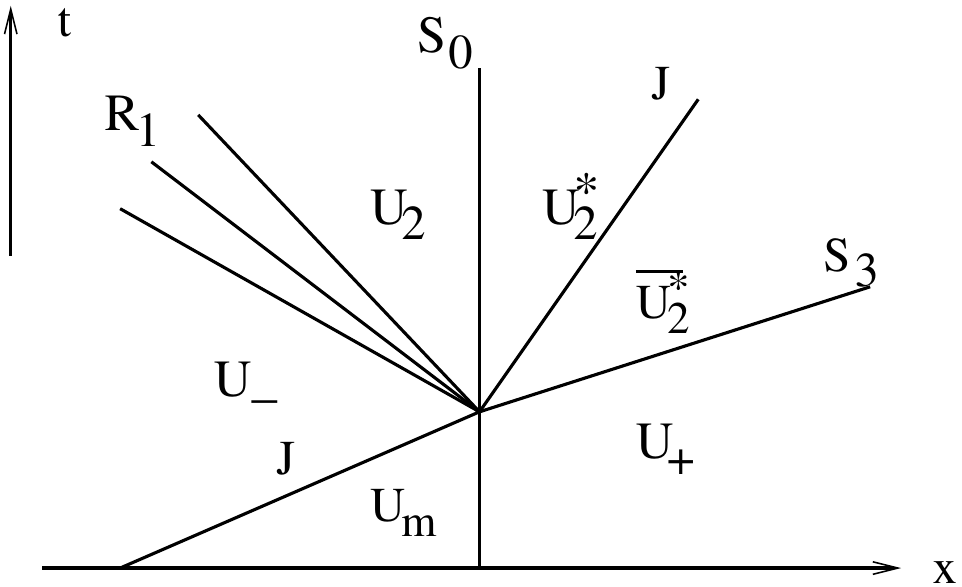}
\end{minipage}
}
\caption*{Fig. 3.4. Case 3. $u_m<c_m, u_-<c_-$ and $\rho_->\rho_m$.}
\end{figure}
\noindent
\begin{proof}
\emph {The proof of case} 3. On one hand, we have $p_{-}^{*}>p_+$ as $\rho_->\rho_m$ from the above discussion.  Note that $p_{-}^{*}$ is on the curve  $l(u_+,p_+): up^{\frac1{\gamma}}=u_+p_+^{\frac1{\gamma}}$ from lemma \ref{lem3.1}, see Fig. 3.4 (left). The interaction result starts from a backward rarefaction wave. It can be shown that $R_1(U,U_-)$ is above the curve $l(u_-,p_-): up^{\frac1{\gamma}}=u_-p_-^{\frac1{\gamma}}$ as $p<p_-$, which can be directly obtained from \eqref{3.11}.

On the other hand, to determine the forward wave, denote $\{U_2^{*}\}=S_0(\overline{U}^{*},\overline{U})\cap W_3(U_+,U)$, where $S_0(\overline{U}^{*},\overline{U})$ is defined in \eqref{3.12}. We next show that $U_2^{*}$ is above the curve $l(u_+,p_+)$. This is not difficult since from \eqref{3.3}, one has the following
\begin{equation}\label{3.14}
a_1u_2^{*}(p_2^{*})^{\frac1{\gamma}}=a_0 u_2p_2^{\frac1{\gamma}}>a_0u_-p_-^{\frac1{\gamma}}=a_1u_{-}^{*}(p_-^{*})^{\frac1{\gamma}}=a_1u_+p_+^{\frac1{\gamma}}.
\end{equation}
Here we use the fact 
\begin{equation}\label{3.13}
u_2p_2^{\frac1{\gamma}}>u_-p_-^{\frac1{\gamma}}
\end{equation}
from the above discussion. Thus the forward wave can be determined.

The interaction result in this case is: $U_-$ first reaches to $U_2$ by a backward rarefaction wave, followed by a stationary wave from $U_2$ to $U_2^{*}$, followed by a contact discontinuity from $U_2^{*}$ to $\overline{U}_2^{*}$, then followed by a forward shock wave from $\overline{U}_2^{*}$ to $U_+$. That is 
\begin{equation}
J(U_m,U_-)\oplus S_0(U_+,U_m)\rightarrow R_1(U_2,U_-) \oplus S_0(U_2^{*},U_2)  \oplus J(\overline{U}_2^{*},U_2^{*})  \oplus S_3(U_+,\overline{U}_2^{*}).
\end{equation}

\begin{figure}[htbp]
\subfigure{
\begin{minipage}[t]{0.4\textwidth}
\centering
\includegraphics[width=0.95\textwidth]{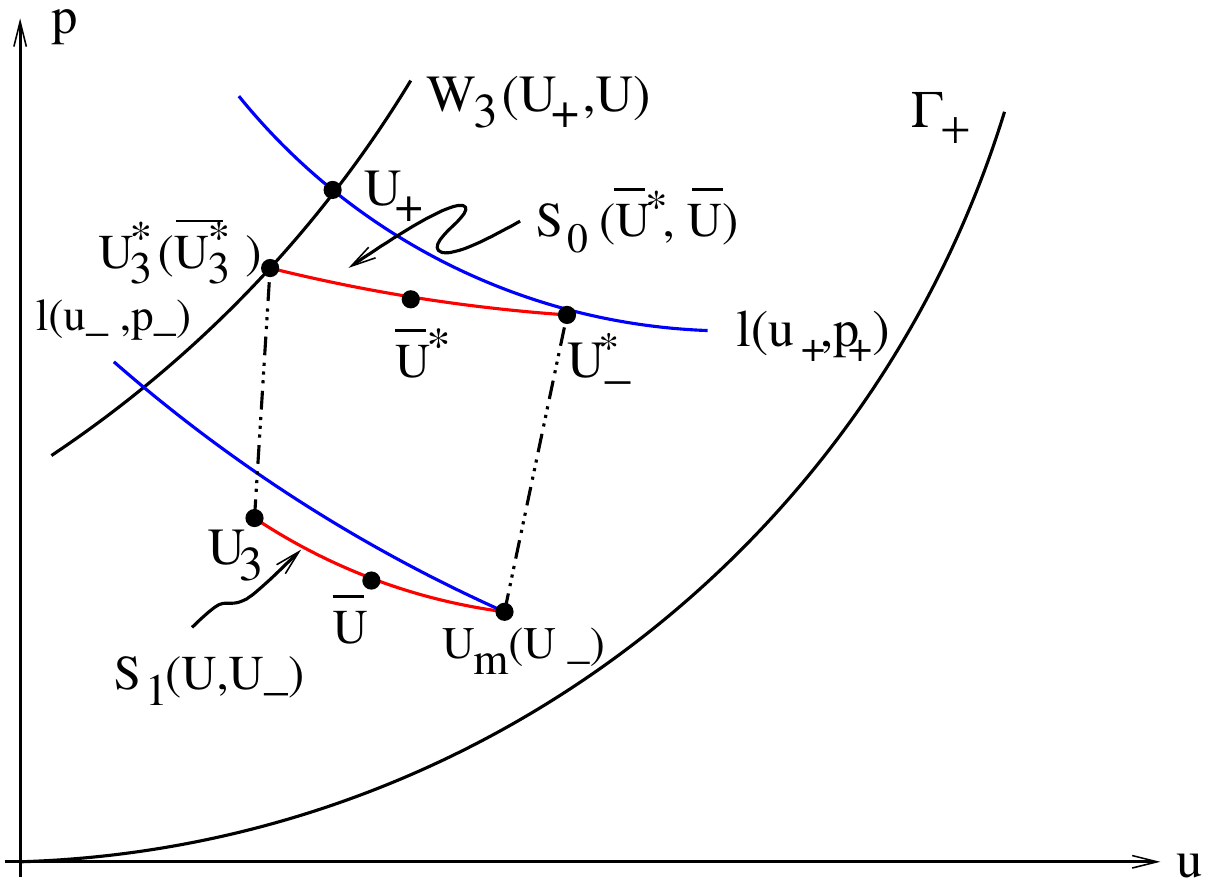}
\end{minipage}
}
\subfigure{
\begin{minipage}[t]{0.4\textwidth}
\centering
\includegraphics[width=0.95\textwidth]{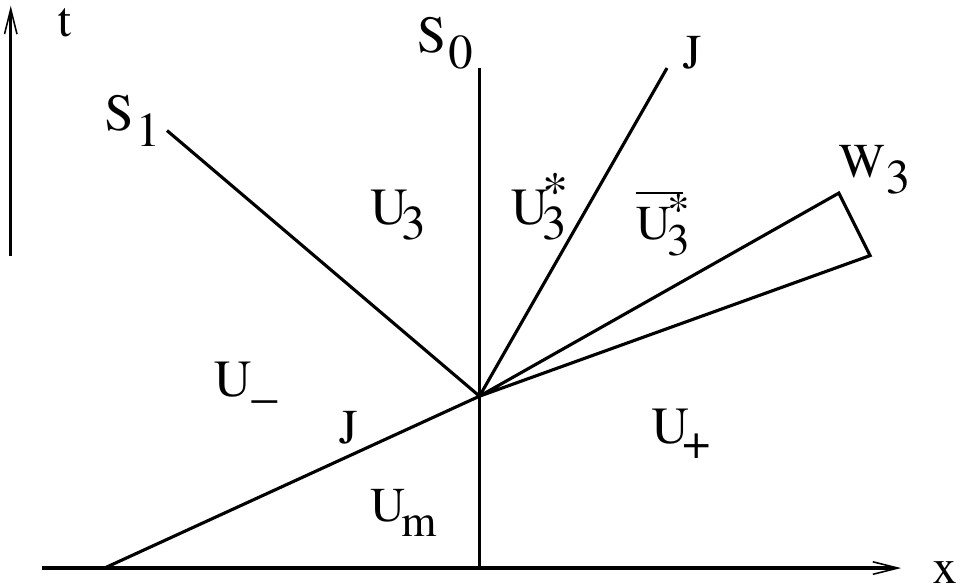}
\end{minipage}
}
\caption*{Fig. 3.5. Case 4. $u_m<c_m, u_-<c_-$ and $\rho_-<\rho_m$.}
\end{figure}

\noindent
 \emph {The proof of case} 4. On one hand, we have $p_{-}^{*}<p_+$ as $\rho_-<\rho_m$  from \eqref{3.9}. Thus the interaction result starts from a backward shock wave first, see Fig. 3.5 (left). On the other hand, to determine the forward wave, denote $\{U_3^{*}\}=S_0(\overline{U}^{*},\overline{U})\cap W_3(U_+,U)$, where $S_0(\overline{U}^{*},\overline{U})$ is defined in \eqref{3.12}. $U_3^{*}$ is jumped from $U_3$ by a stationary wave. It is sufficient to show that $U_3$ is on the left side of $l(u_-,p_-)$ as $1<\gamma\leq2$. The conclusion is not obviously since direct comparison of the positions between $S_1(U,U_-)$ and $l(u_-,p_-)$ may bring difficulties. Here we use the curve $R_1(U,U_-)$ to prove it. To this end, from \eqref{2.18}, one has
\begin{equation}\label{3.15}
\displaystyle \frac{{\rm d} u}{{\rm d} p}\big|_{S_1(U,U_-)}=-\sqrt{\frac{(1-\mu^2)\tau}{\mu^2 p+p_-}}\frac{p+(1+2\mu^2)p_-}{2(p+\mu^2p_-)}.
\end{equation}
Since $S_1(U,U_-)$ touches $R_1(U,U_-)$ up to the second order (\cite{Smoller}). It is necessary to show that 
\begin{equation}\label{3.16}
\displaystyle \frac{{\rm d} u}{{\rm d} p}\big|_{S_1(U,U_-)}- \frac{{\rm d} u}{{\rm d} p}\big|_{R_1(U,U_-)}<0 
\end{equation}
as $p>p_-$ when $1<\gamma \leq 2$. One can compute \eqref{3.16} by
\begin{equation}\label{3.17}
\begin{array}{lll}
\displaystyle \frac{{\rm d} u}{{\rm d} p}\big|_{S_1(U,U_-)}- \frac{{\rm d} u}{{\rm d} p}\big|_{R_1(U,U_-)}&=\displaystyle \sqrt{\frac{\tau}{ \gamma p}}-\sqrt{\frac{(1-\mu^2)\tau}{\mu^2 p+p_-}}\frac{p+(1+2\mu^2)p_-}{2(p+\mu^2p_-)}\\[12pt]
 &= \displaystyle \sqrt{\frac{(1-\mu^2)\tau}{\mu^2p+p_0}}\left(\sqrt{\frac{\mu^2p+p_0}{\gamma(1-\mu^2)p}}-\frac{p+(1+2\mu^2)p_0}{2(p+\mu^2p_0)}\right).
\end{array}
\end{equation}
We are left to determine the sign of
\begin{equation}\label{3.18}
\displaystyle \sqrt{\frac{\mu^2p+p_0}{\gamma(1-\mu^2)p}}-\frac{p+(1+2\mu^2)p_0}{2(p+\mu^2p_0)}.
\end{equation}
Set 
\begin{equation}\label{3.19}
\displaystyle  f(p,p_0): =\frac{\mu^2p+p_0}{\gamma(1-\mu^2)p}-\left(\frac{p+(1+2\mu^2)p_0}{2(p+\mu^2p_0)}\right)^2,
\end{equation}
if we make $x=p/p_0, x>1$, then a direct calculation shows that the size of \eqref{3.19} is equivalent to 
\begin{align}\label{3.20}
\displaystyle  g(x): &=(4\mu^2-\gamma(1-\mu^2))x^3+(4+8\mu^4-2\gamma(1-\mu^2)(1+2\mu^2))x^2\\
&+(8\mu^2-\gamma(1-\mu^2)(1+2\mu^2)^2+4\mu^6)x+4\mu^4.
\end{align}
One can easily verify that $g(1)=g'(1)=0$, $g'(x)<0$ for $x>1$ as $1<\gamma \leq 2$. 
Thus the curve $S_1(U,U_-)$ is below $R_1(U,U_-)$ as $p>p_-$ when $1<\gamma \leq 2$. Besides, from \eqref{3.11}, one already knows that $R_1(U,U_-)$ is always below $l(u_-,p_-)$ as $p>p_-$. This leads to the fact that $S_1(U,U_-)$ is below $l(u_-,p_-)$ as $p>p_-$ when $1<\gamma \leq 2$.

Similar as \eqref{3.14}, one may show that $p_3^{*}<p_+$, see Fig. 3.5. The interaction result in this case is: $U_-$ and $U_3$ are connected by a backward shock wave, followed by a stationary wave from $U_3$ to $U_3^{*}$, then followed by a forward rarefaction wave from $U_3^{*}$ to $U_+$. That is 
\begin{equation}
J(U_m,U_-)\oplus S_0(U_+,U_m)\rightarrow S_1(U_3,U_-) \oplus S_0(U_3^{*},U_3) \oplus R_3(U_+,U_3^{*}).
\end{equation}
\end{proof}

\begin{figure}[htbp]
\subfigure{
\begin{minipage}[t]{0.31\textwidth}
\centering
\includegraphics[width=0.985\textwidth]{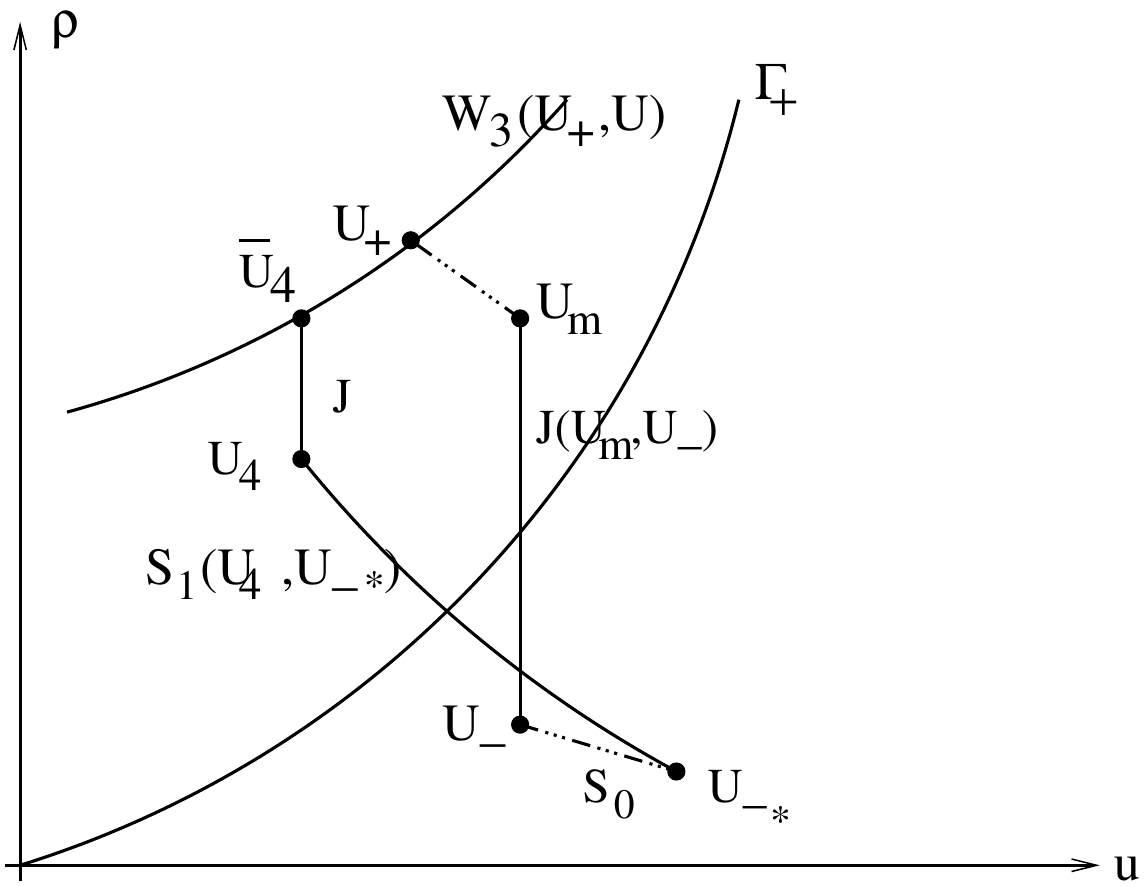}
\end{minipage}
}
\subfigure{
\begin{minipage}[t]{0.31\textwidth}
\centering
\includegraphics[width=0.985\textwidth]{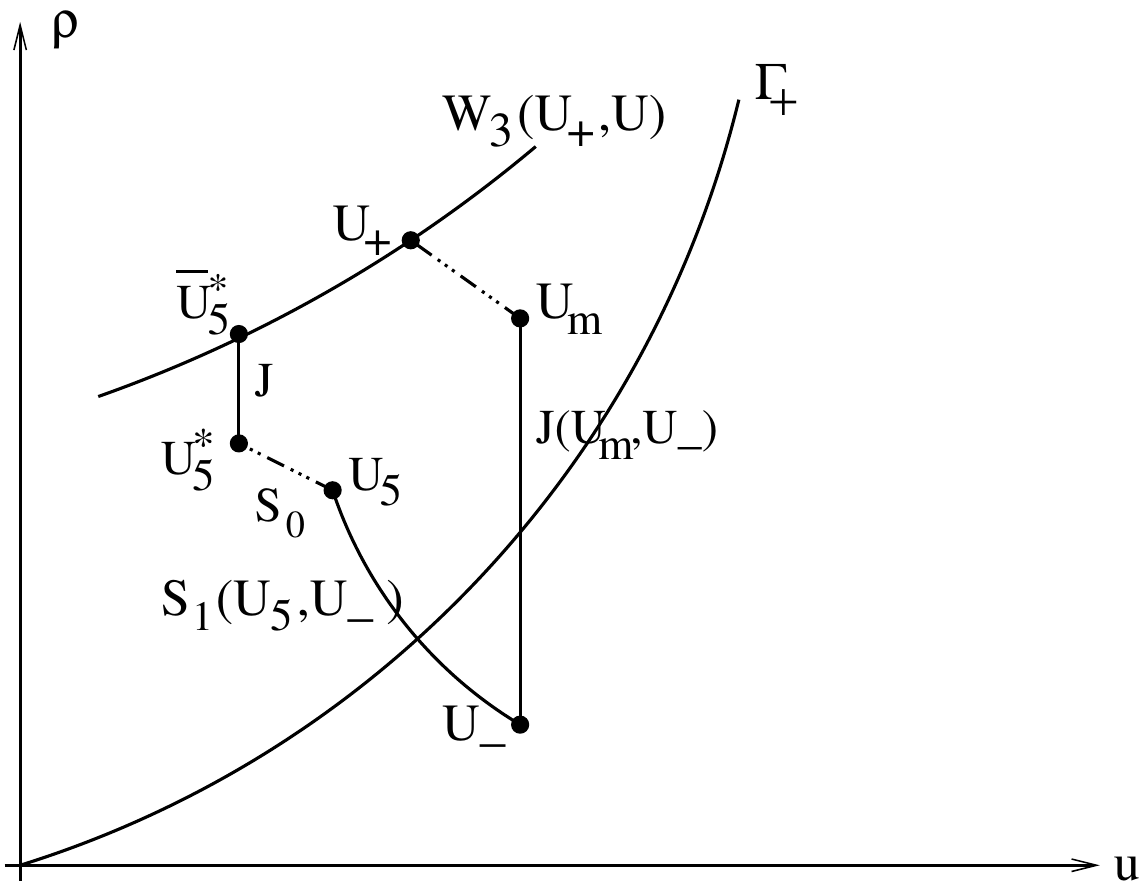}
\end{minipage}
}
\subfigure{
\begin{minipage}[t]{0.31\textwidth}
\centering
\includegraphics[width=0.985\textwidth]{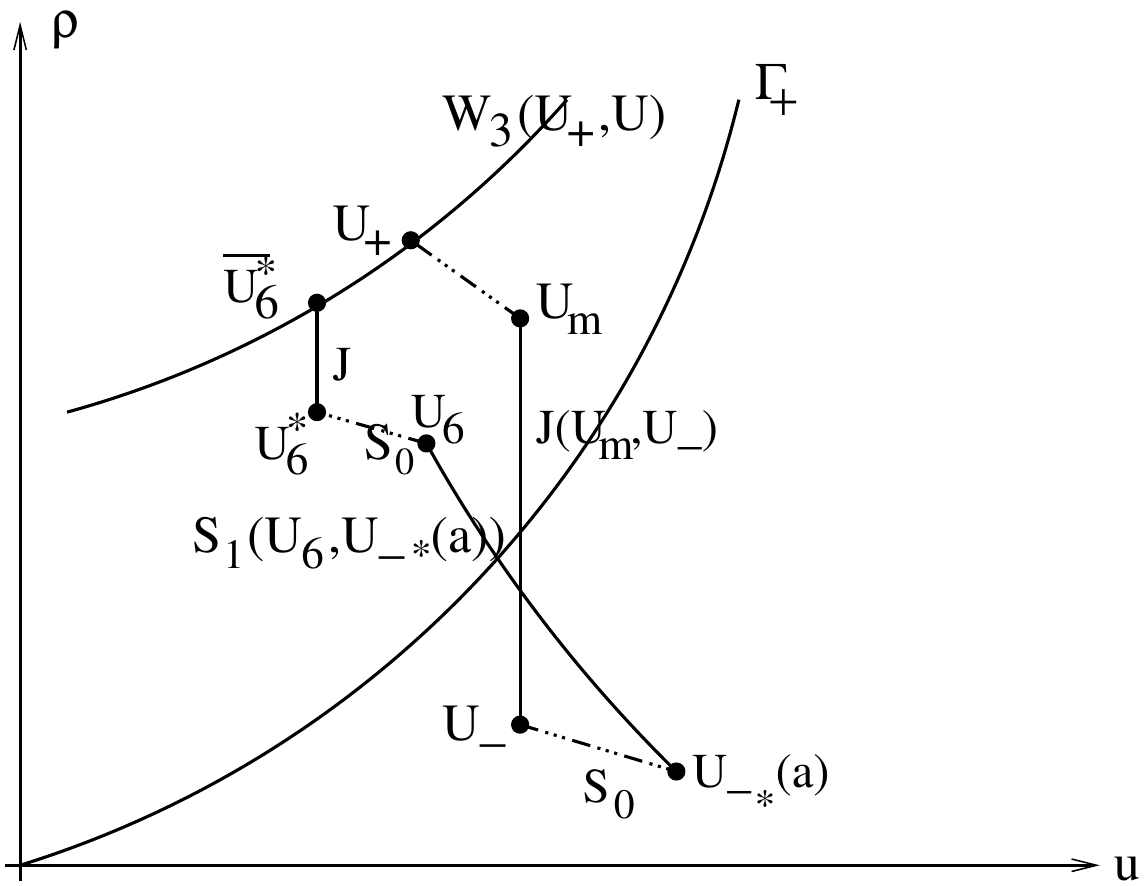}
\end{minipage}
}
\caption*{Fig. 3.6. Case $u_m<c_m$ and $u_->c_-$.}
\end{figure}

{\bf Construction 4.  $u_m<c_m$ and $u_->c_-$}. \space  We are left with the case $u_->c_-$ as $u_m<c_m$. This is also a transonic case. The new Riemann problem as the interaction happens have at most three solutions. See Fig. 3.6. For the first solution, $U_-$ jumps to $U_{-*}$ by a stationary wave, followed by a backward shock wave with positive speed, then followed by a forward wave. That is
\begin{equation}
J(U_m,U_-)\oplus S_0(U_+,U_m)\rightarrow S_0(U_{-*},U_-) \oplus S_1(U_4,U_{-*}) \oplus J(\overline{U}_4,U_4) \oplus W_3(U_+,\overline{U}_4).
\end{equation}
 For the second solution, $U_-$ jumps to a subsonic state by a backward shock wave, followed by a stationary wave, then followed by a forward wave. That is
\begin{equation}
J(U_m,U_-)\oplus S_0(U_+,U_m)\rightarrow S_1(U_5,U_-) \oplus S_0(U_5^{*},U_5)\oplus J(\overline{U}_5^{*},U_5^{*}) \oplus W_3(U_+,\overline{U}_5^{*}).
\end{equation}
For the third solution, it contains three waves with the same zero speed. That is
\begin{equation}
 S_0(U_{-*}(a),U_-) \oplus 
S_1(U_6,U_{-*} (a)) \oplus  S_0(U_6^{*},U_6) \oplus J(\overline{U}_6^{*},U_6^{*}) \oplus W_3(U_+,U_6^{*}).
\end{equation}
Here $U_{-*} (a)$ is jumped from $U_-$ by stationary wave with the cross section shifting from $a_0$ to an intermediate state $a\in[a_0,a_1]$. We refer to \cite{Thanh17} for more details.

\section{Numerical simulations}\label{sec4}
In this section we give some numerical examples, which is consistent with our analysis in section \ref{sec3}. Given a uniform time step $\Delta t$ and an equal mesh size $\Delta x$. Set $x_j=j\Delta x$, $j\in {\bf Z}$, $t^{n}=n\Delta{t}$, $n\in {\bf N}$. Set 
\begin{equation}\label{4.1}
\lambda=\frac{\Delta t}{\Delta x}.
\end{equation}

Let $V_j^{n}$ be the approximation of the values $V(x_j,t^{n})$ of the exact solution. Here we use the modified Godunov-Rusanov scheme (see \cite{SaurelAbgrall16})
\begin{equation}\label{4.2}
\begin{array}{ll}
\displaystyle V_i^{n+1}=V_i^{n}-\lambda\left(F_{i+1/2}^{n}-F_{i-1/2}^{n}\right)+\Delta t p_i^{n} \Delta,\\[10pt]
V :=(a\rho, a\rho u, a\rho E),\quad F(V) := (a\rho u,a(\rho u^2+p), au(\rho E+p)),
\end{array}
\end{equation}
where $\Delta$ represents the discrete form of the term $a_x$ which is set to 
\begin{equation}\label{4.3}
\displaystyle \Delta=\frac{a_{i+1}^{n}-a_i^{n}}{2\Delta x}.
\end{equation}
The numerical flux for the conservative fluxes is given by
\begin{equation}\label{4.4}
\displaystyle F_{i+1/2}^{n}=\frac12\left(F_i+F_{i+1}-S_{i+1/2}(V_{i+1}-V_i)\right),
\end{equation}
where $S_{i+1/2}={\rm max}\{|(\lambda_1)_i|, |(\lambda_3)_i|, |(\lambda_1)_{i+1}|, |(\lambda_3)_{i+1}|\}$.

The  domain is set to [0,10], the stationary wave is located at $x=3.0$ for clearly seen. We use 2000 grids, the CFL constant is set to 0.75, $\gamma=2.0$. 

\noindent 
{\bf Test 1.} The initial data is given by
\begin{equation}\label{4.5}
\begin{array}{lll}
\displaystyle (\rho_-,u_-,p_-,a_0)=(2.25,5.0, 5.0, 1.0),\quad 0<x<2.9,\\
 (\rho_m,u_m,p_m,a_0)=(1.0,5.0, 5.0, 1.0), \quad 2.9<x<3,\\
 (\rho_+,u_+,p_+,a_1)=(0.688168,5.589, 2.3679, 1.5), \quad 3<x<10.
\end{array}
\end{equation}

\begin{figure}[htbp]
\subfigure{
\begin{minipage}[t]{0.31\textwidth}
\centering
\includegraphics[width=\textwidth]{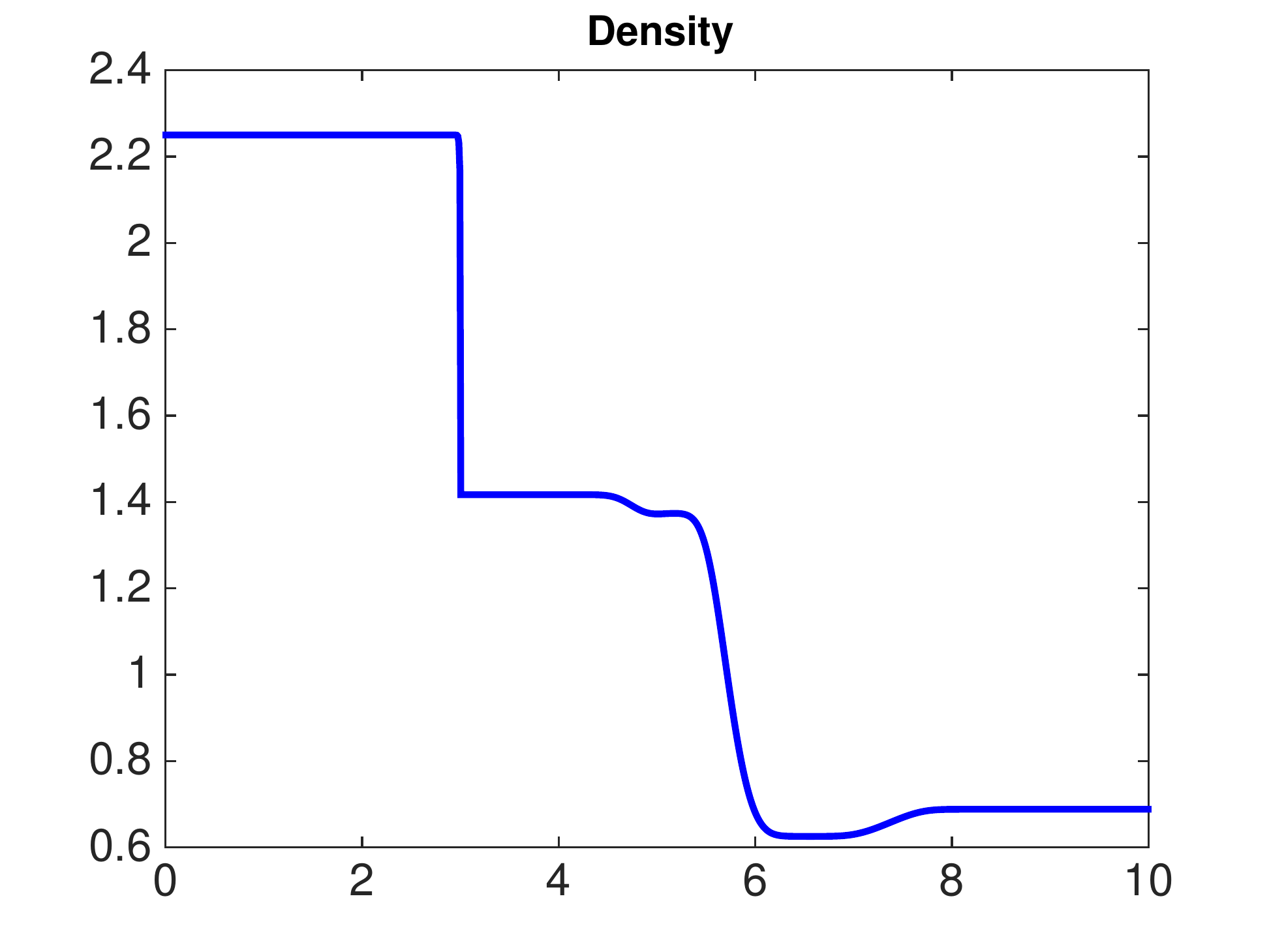}
\end{minipage}
}
\subfigure{
\begin{minipage}[t]{0.31\textwidth}
\centering
\includegraphics[width=\textwidth]{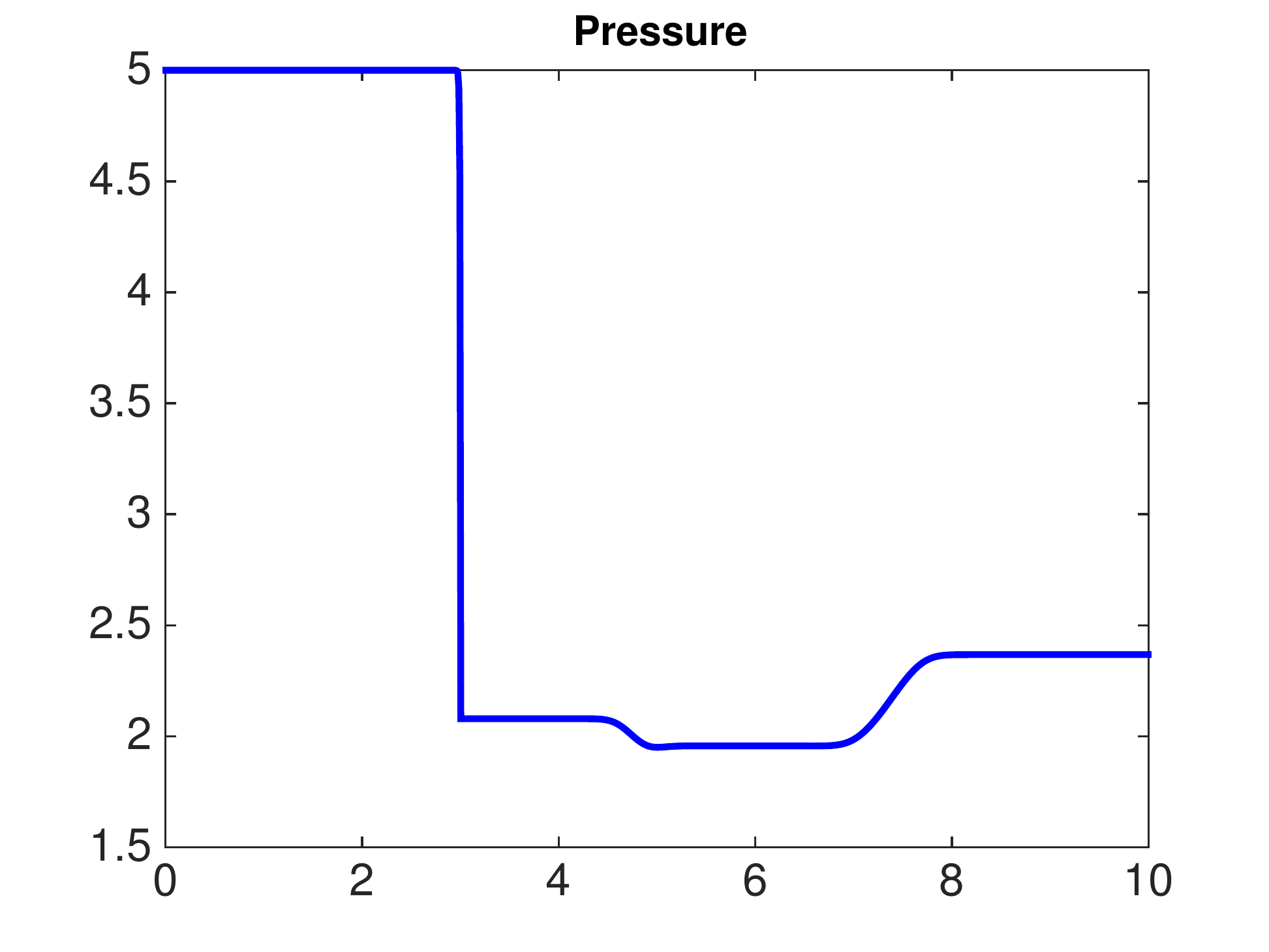}
\end{minipage}
}
\subfigure{
\begin{minipage}[t]{0.31\textwidth}
\centering
\includegraphics[width=\textwidth]{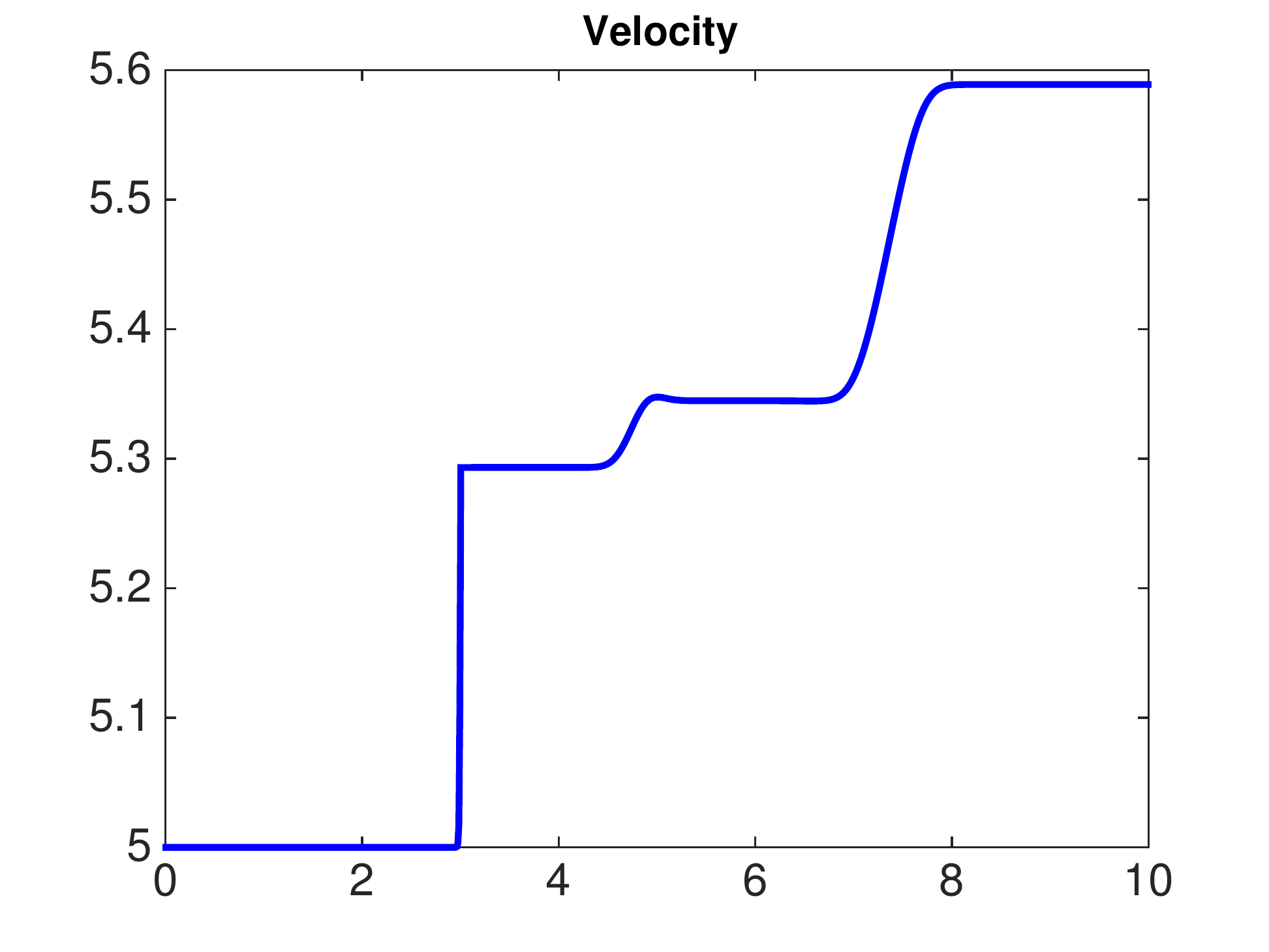}
\end{minipage}
}
\caption*{Fig. 4.1. Test 1.}
\end{figure}
We have $U_-\in D_1$, $U_m\in D_1$, $U_+\in D_1$. The result is shown at $t=0.35s$, see Fig. 4.1. The solution begins with a stationary wave, followed by a backward rarefaction wave, followed by a contact discontinuity, then followed by a forward rarefaction wave. The result is the same with that in case 1.

\noindent
{\bf Test 2.} The initial data is given by
\begin{equation}\label{4.6}
\begin{array}{lll}
\displaystyle (\rho_-,u_-,p_-,a_0)=(0.75, 5.0, 5.0, 1.0),\quad 0<x<2.9,\\
 (\rho_m,u_m,p_m,a_0)=(1.0, 5.0, 5.0, 1.0), \quad 2.9<x<3,\\
 (\rho_+,u_+,p_+,a_1)=(0.688168, 5.589, 2.3679, 1.3), \quad 3<x<10.
\end{array}
\end{equation}

\begin{figure}[htbp]
\subfigure{
\begin{minipage}[t]{0.31\textwidth}
\centering
\includegraphics[width=0.95\textwidth]{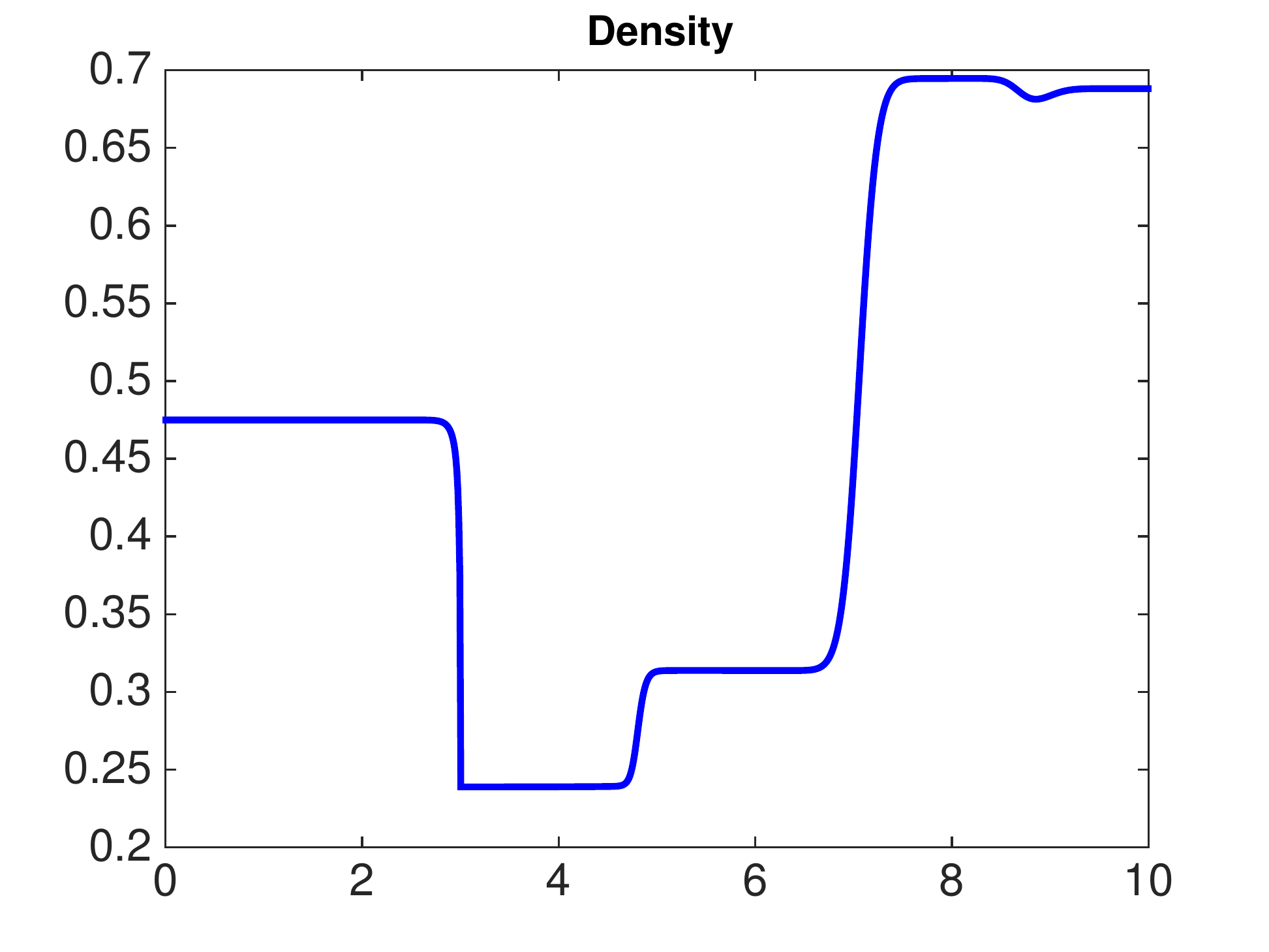}
\end{minipage}
}
\subfigure{
\begin{minipage}[t]{0.31\textwidth}
\centering
\includegraphics[width=0.95\textwidth]{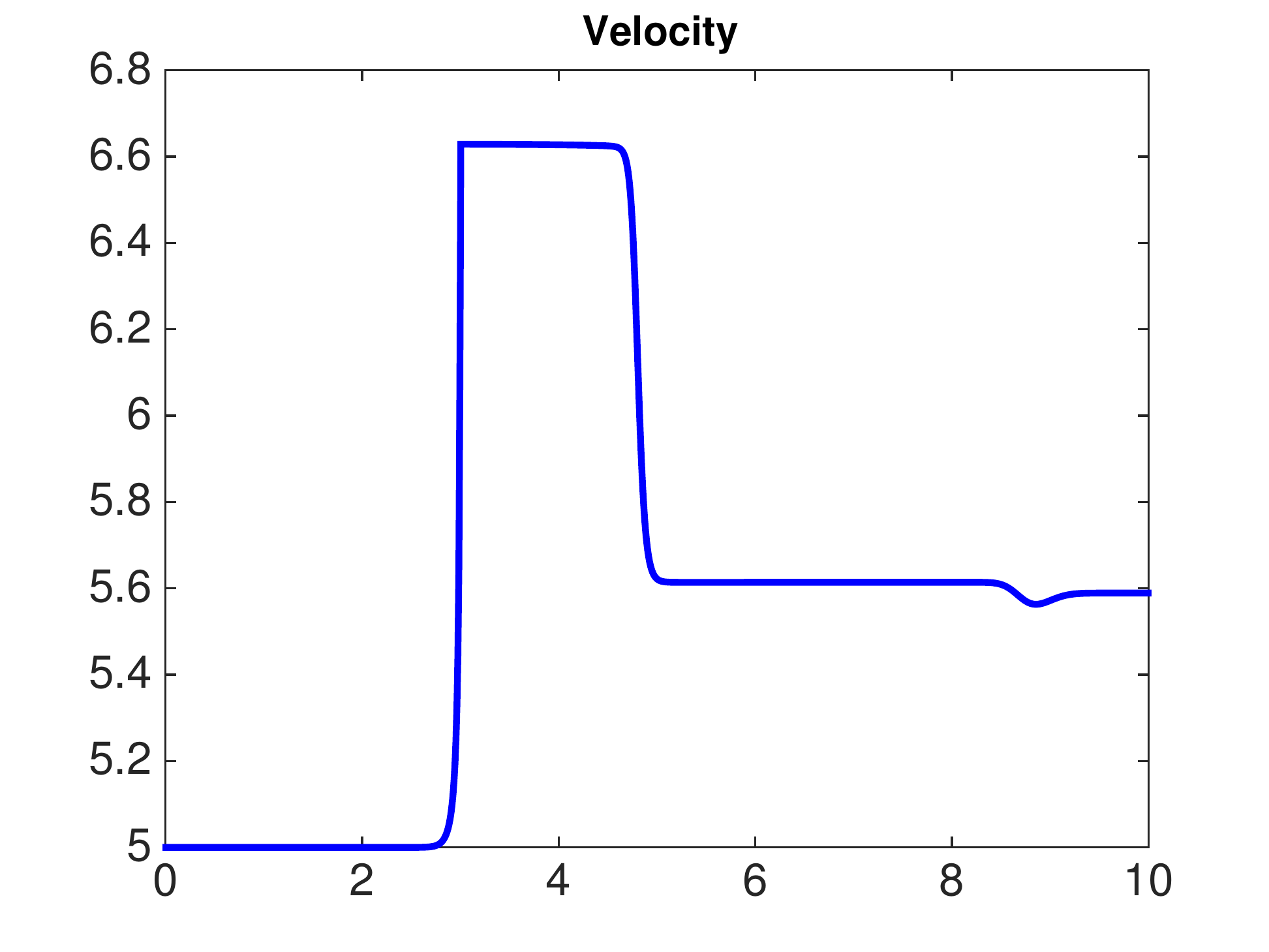}
\end{minipage}
}
\subfigure{
\begin{minipage}[t]{0.31\textwidth}
\centering
\includegraphics[width=0.95\textwidth]{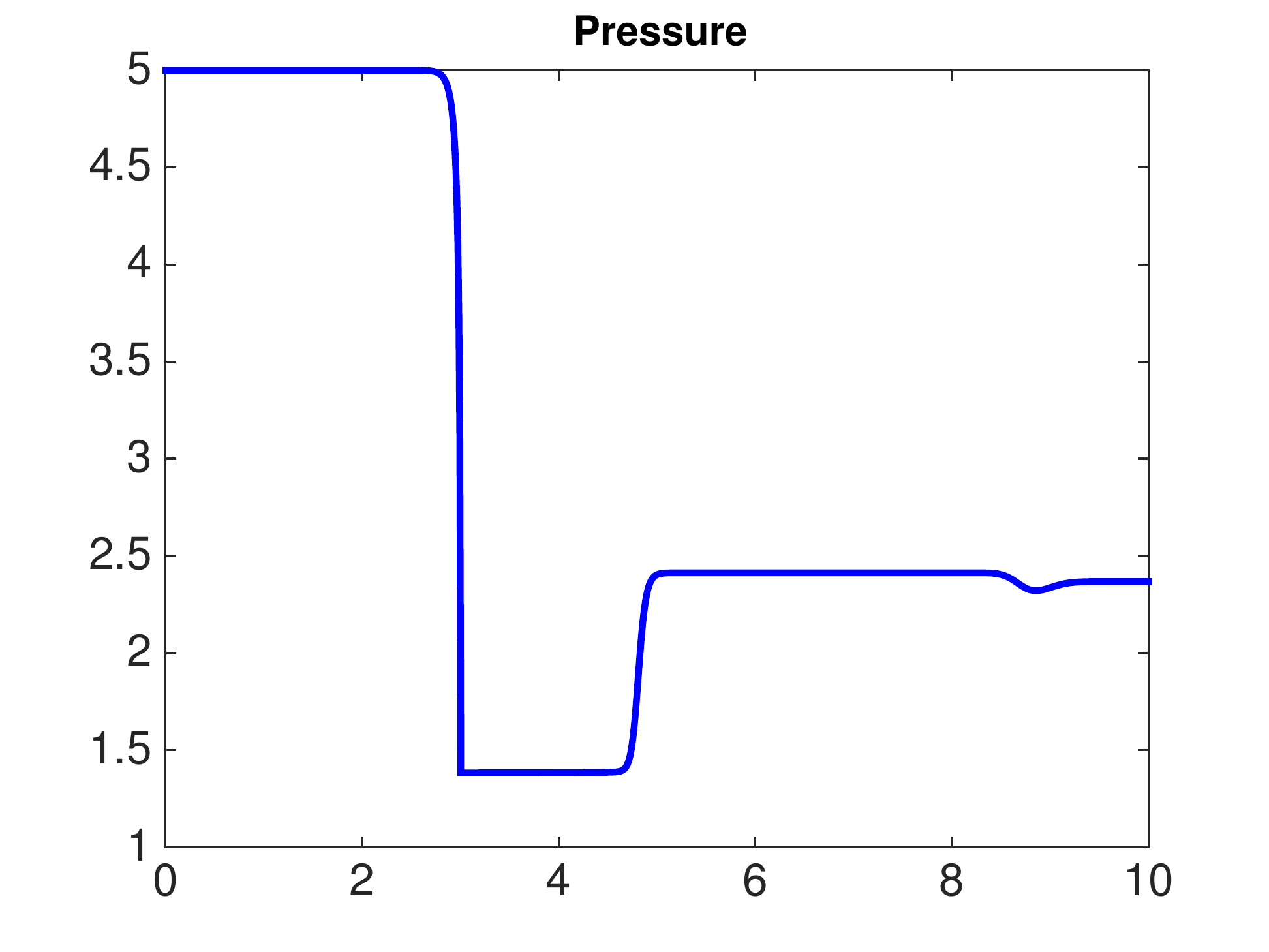}
\end{minipage}
}
\caption*{Fig. 4.2. Test 2.}
\end{figure}

We have $U_-\in D_1$, $U_m\in D_1$, $U_+\in D_1$. The result is shown at $t=0.35s$, see Fig. 4.2. The solution begins with a stationary wave, followed by a backward shock wave, followed by a contact discontinuity, then followed by a forward shock wave. The result is the same with that in case 2.

\noindent
{\bf Test 3.} The initial data is given by
\begin{equation}\label{4.7}
\begin{array}{lll}
\displaystyle (\rho_-,u_-,p_-,a_0)=( 0.25, 5.0,5.0, 1.0),\quad 0<x<2.9,\\
 (\rho_m,u_m,p_m,a_0)=(1.0, 5.0, 5.0, 1.0), \quad 2.9<x<3,\\
 (\rho_+,u_+,p_+,a_1)=(0.688168, 5.589, 2.3679, 1.5), \quad 3<x<10.
\end{array}
\end{equation}

We have $U_-\in D_2$, $U_m\in D_1$, $U_+\in D_1$. The result is shown at $t=0.5s$, see Fig. 4.3. The solution begins with a backward rarefaction wave, which is attached with the stationary wave, followed by a backward shock wave, followed by a contact discontinuity, then followed by a forward shock wave. The result is the same with that in the transonic case.

\begin{figure}[htbp]
\subfigure{
\begin{minipage}[t]{0.31\textwidth}
\centering
\includegraphics[width=0.95\textwidth]{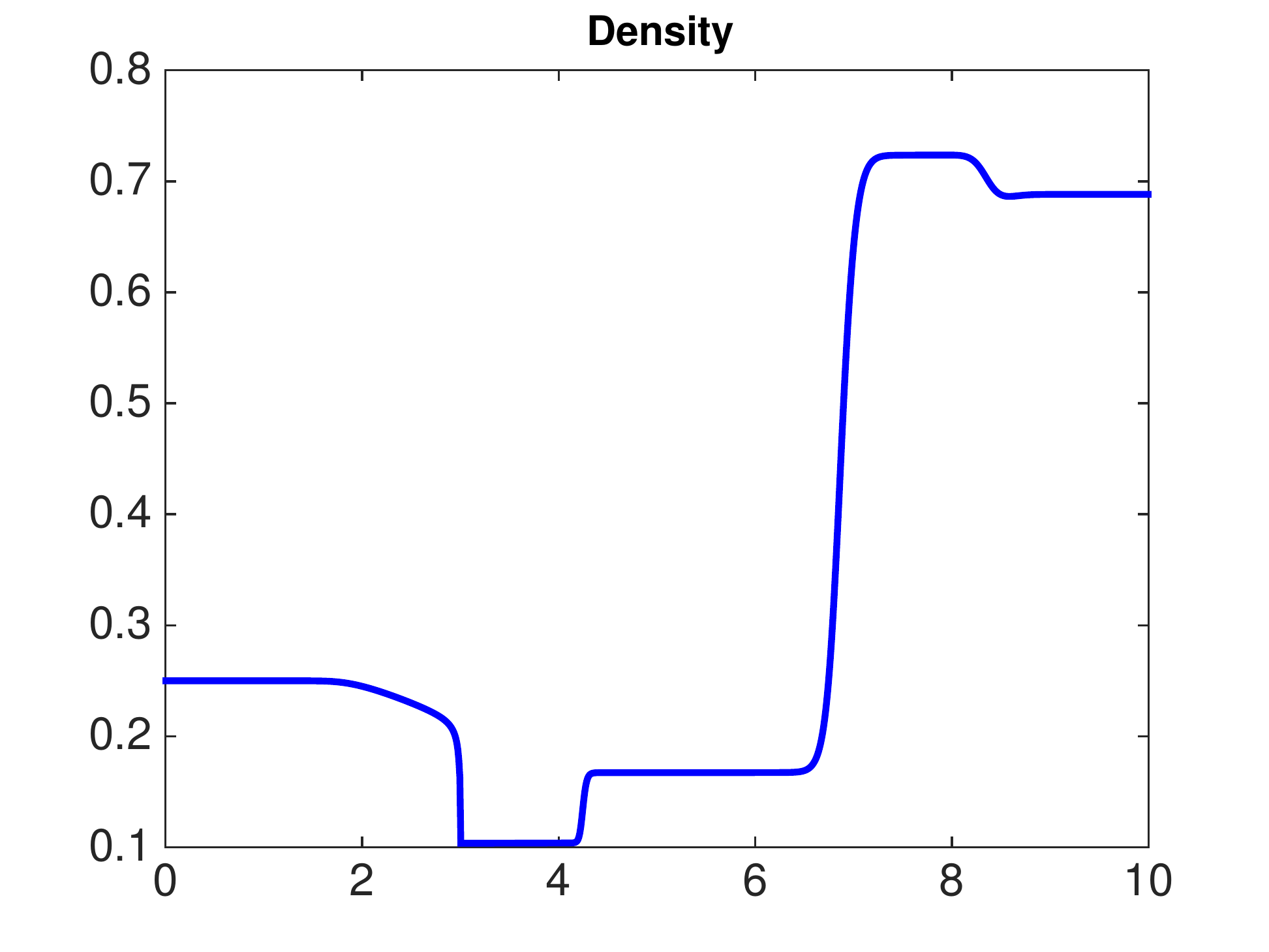}
\end{minipage}
}
\subfigure{
\begin{minipage}[t]{0.31\textwidth}
\centering
\includegraphics[width=0.95\textwidth]{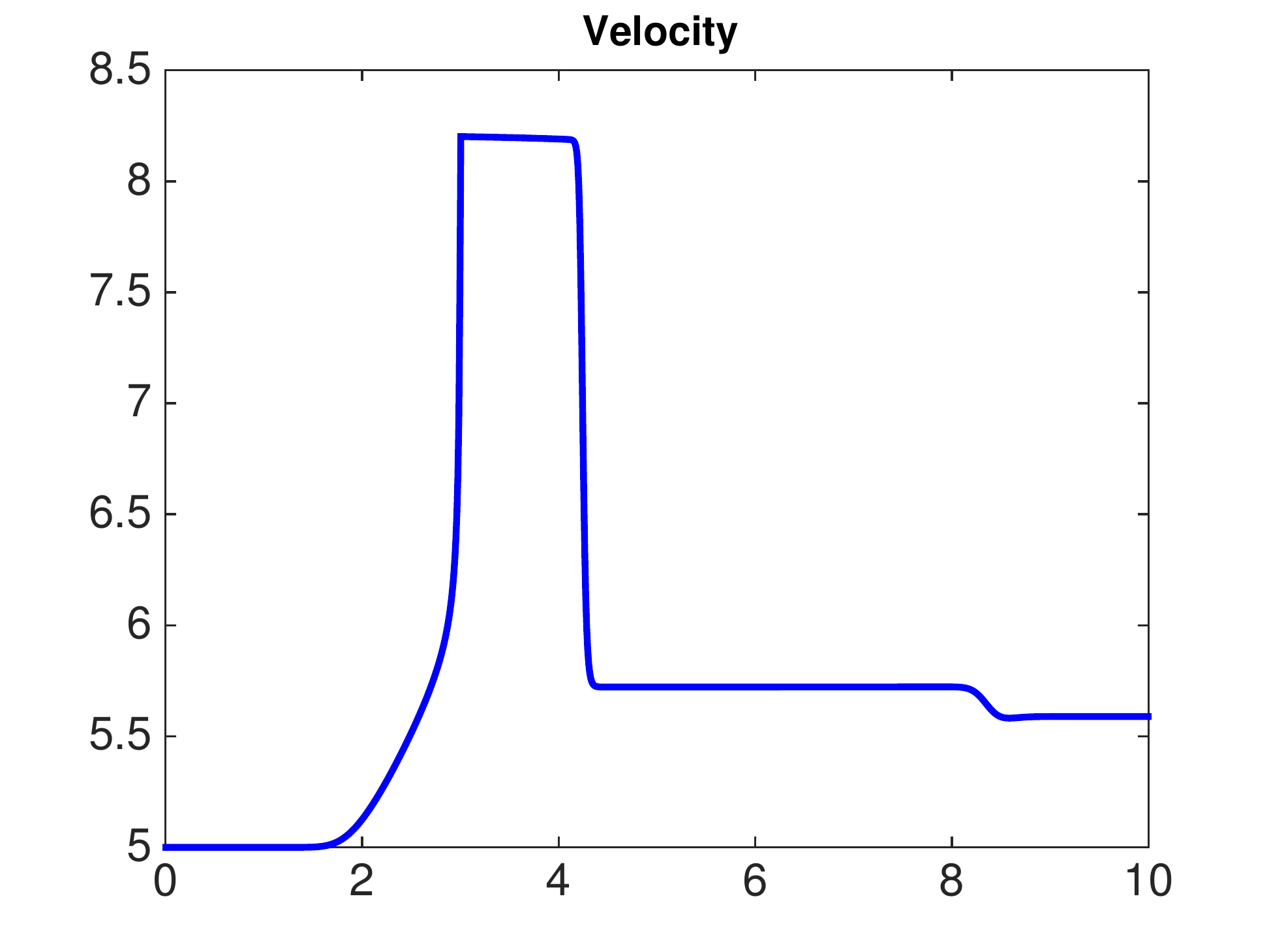}
\end{minipage}
}
\subfigure{
\begin{minipage}[t]{0.31\textwidth}
\centering
\includegraphics[width=0.95\textwidth]{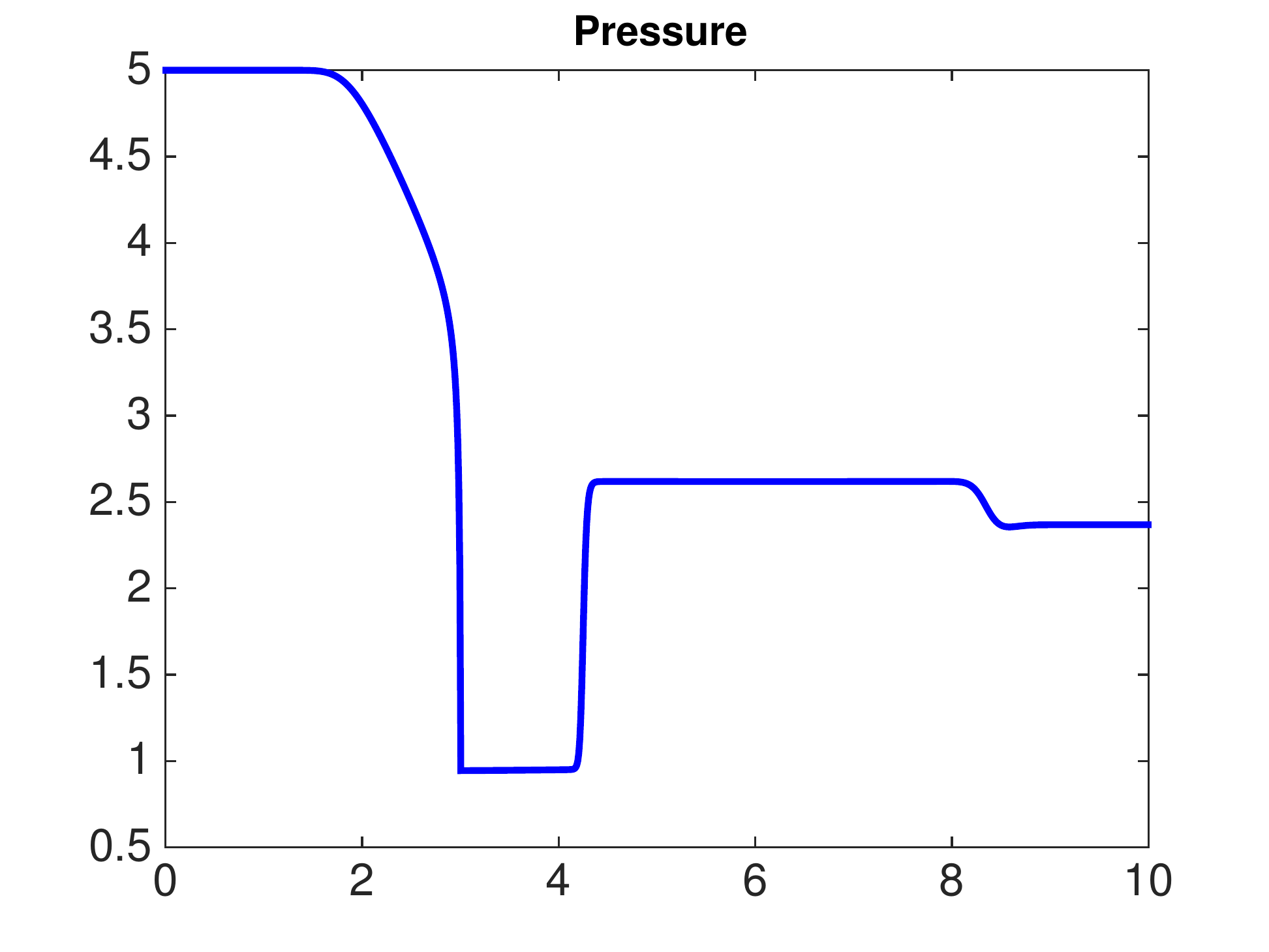}
\end{minipage}
}
\caption*{Fig. 4.3. Test 3.}
\end{figure}

\noindent
{\bf Test 4.} The initial data is given by
\begin{equation}\label{4.8}
\begin{array}{lll}
\displaystyle (\rho_-,u_-,p_-,a_0)=(1.075, 1.5, 5.0, 1.0),\quad 0<x<2.9,\\
 (\rho_m,u_m,p_m,a_0)=(1.0, 1.5, 5.0, 1.0), \quad 2.9<x<3,\\
 (\rho_+,u_+,p_+,a_1)=(1.0687, 0.9357, 4.3777, 1.5), \quad 3<x<10.
\end{array}
\end{equation}

We have $U_-\in D_2$, $U_m\in D_2$, $U_+\in D_2$. The result is shown at $t=1.0s$, see Fig. 4.4. The solution begins with a backward rarefaction wave, followed by a stationary wave, followed by a contact discontinuity, then followed by forward shock wave. The result is the same with that in case 3.

\begin{figure}[htbp]
\subfigure{
\begin{minipage}[t]{0.31\textwidth}
\centering
\includegraphics[width=0.95\textwidth]{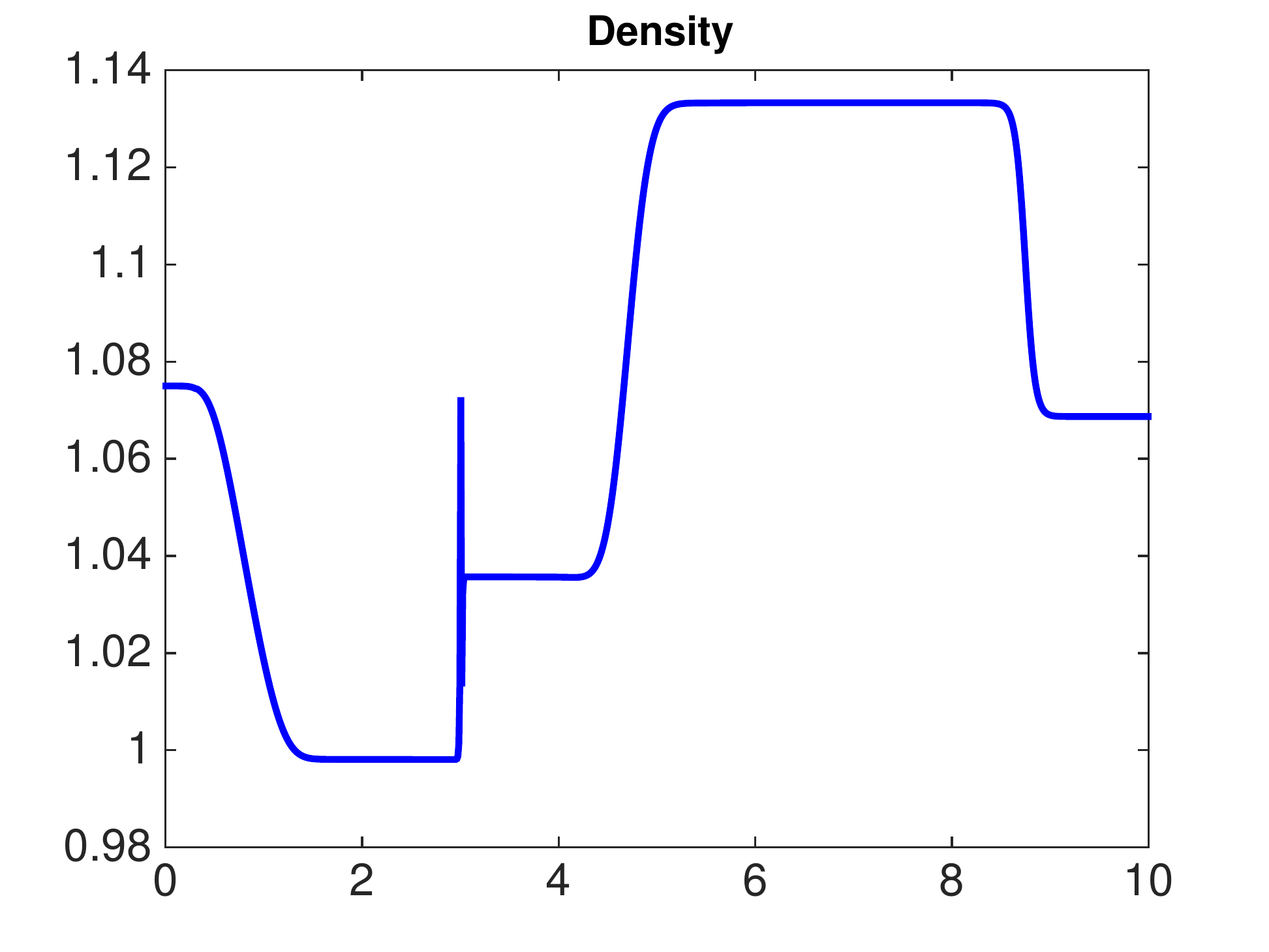}
\end{minipage}
}
\subfigure{
\begin{minipage}[t]{0.31\textwidth}
\centering
\includegraphics[width=0.95\textwidth]{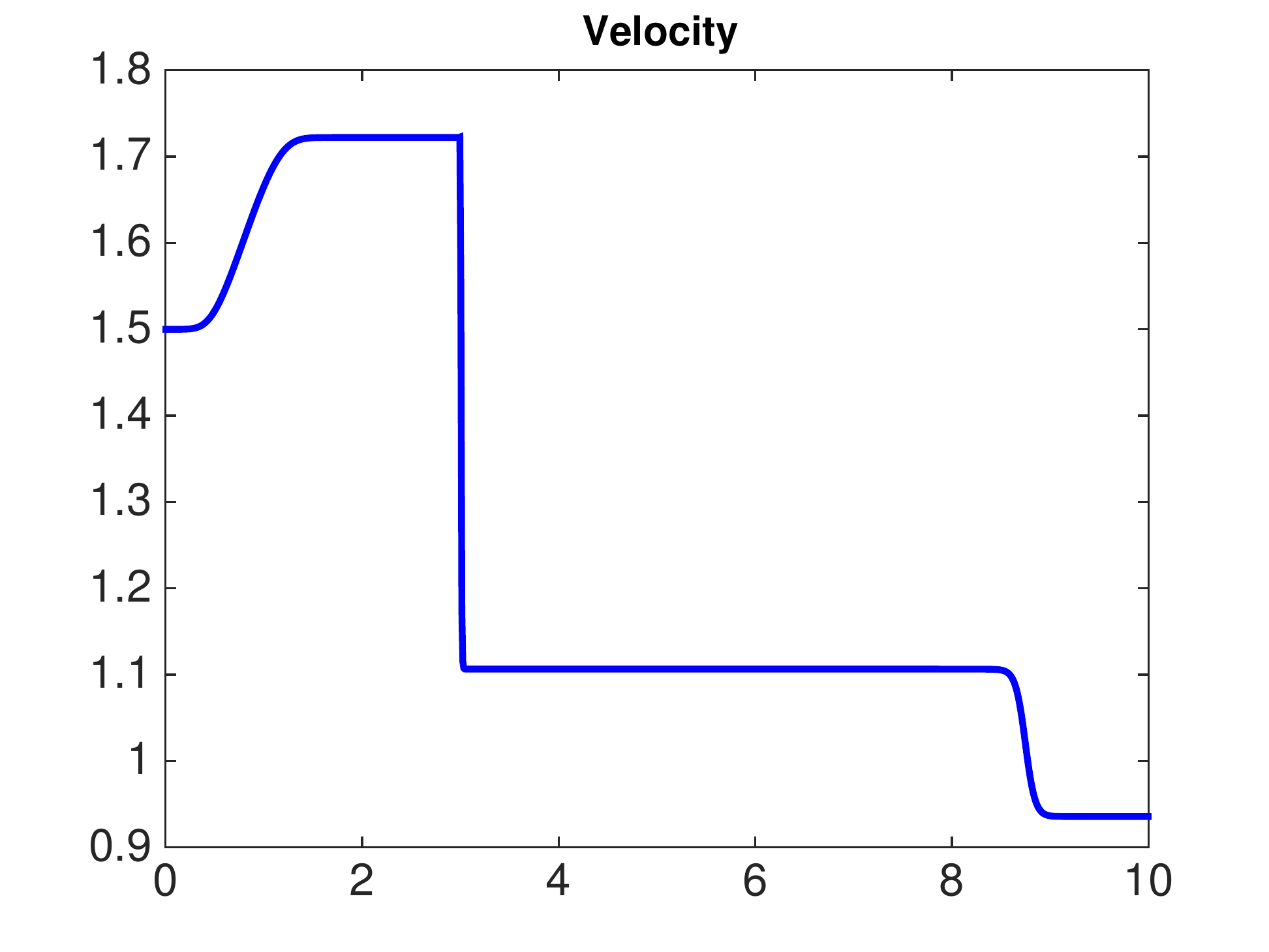}
\end{minipage}
}
\subfigure{
\begin{minipage}[t]{0.31\textwidth}
\centering
\includegraphics[width=0.95\textwidth]{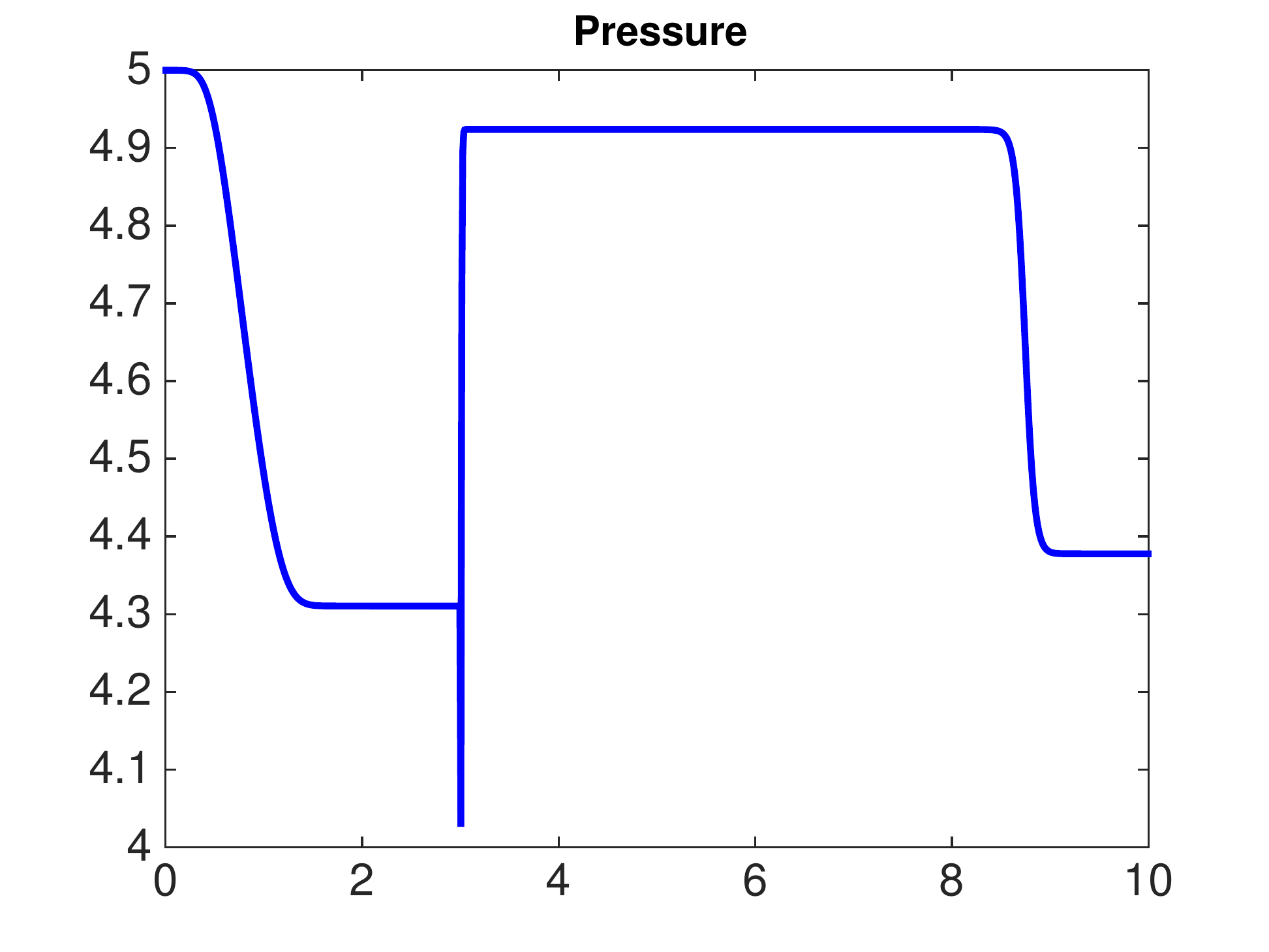}
\end{minipage}
}
\caption*{Fig. 4.4. Test 4.}
\end{figure}

\noindent
{\bf Test 5.} The initial data is given by
\begin{equation}\label{4.9}
\begin{array}{lll}
\displaystyle (\rho_-,u_-,p_-,a_0)=(1.0, 4.0, 10.0, 1.0),\quad 0<x<2.9,\\
 (\rho_m,u_m,p_m,a_0)=(1.2, 4.0, 10.0, 1.0), \quad 2.9<x<3,\\
 (\rho_+,u_+,p_+,a_1)=(1.63872, 1.9527, 18.6486, 1.5), \quad 3<x<10.
\end{array}
\end{equation}

We have $U_-\in D_2$, $U_m\in D_2$, $U_+\in D_2$. The result is shown at $t=0.5s$, see Fig. 4.5. The solution begins with a backward shock wave, followed by a stationary wave, followed by a contact discontinuity, then followed by a forward rarefaction wave. The result is the same with that in case 4.

\begin{figure}[htbp]
\subfigure{
\begin{minipage}[t]{0.31\textwidth}
\centering
\includegraphics[width=0.95\textwidth]{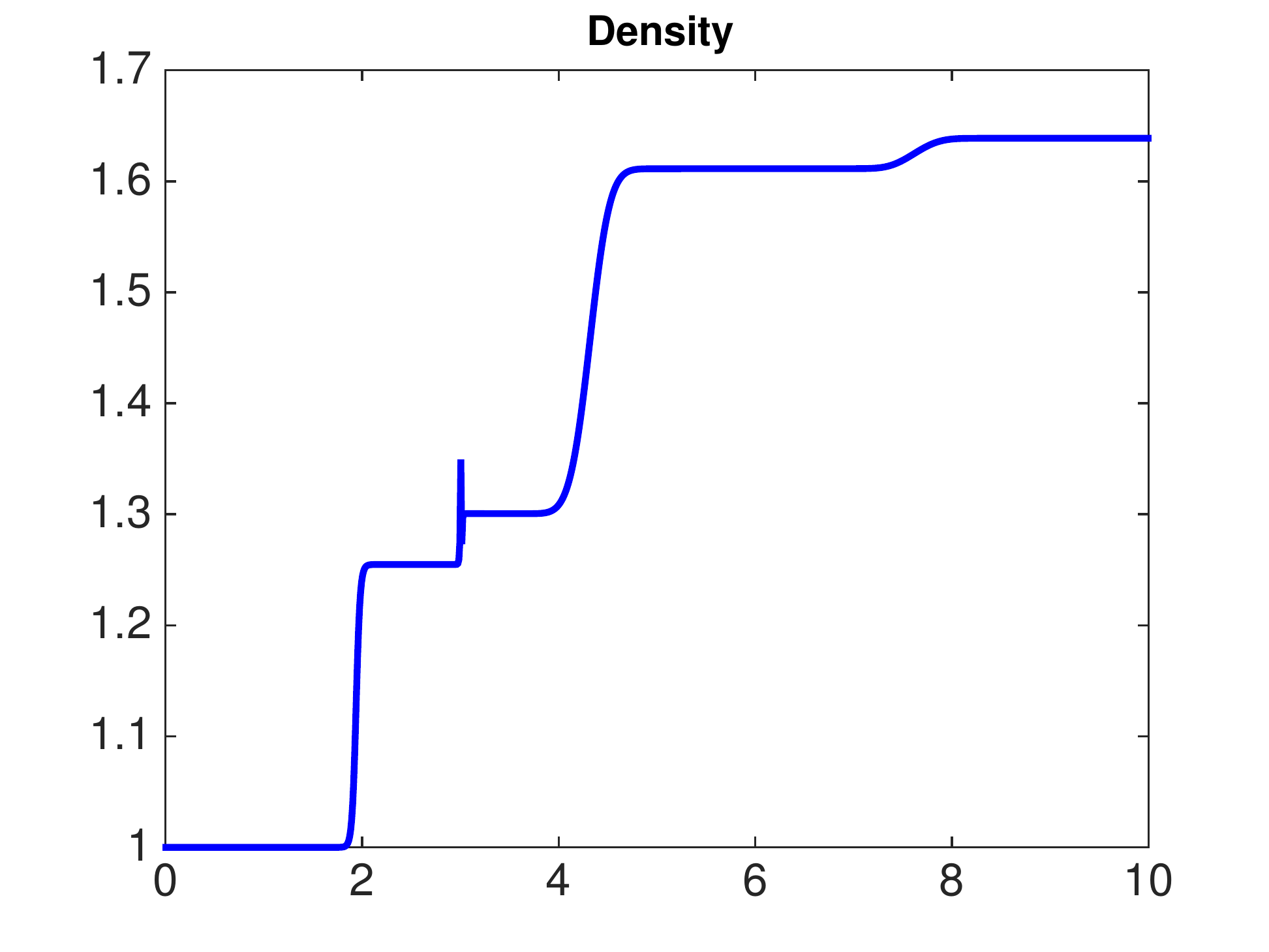}
\end{minipage}
}
\subfigure{
\begin{minipage}[t]{0.31\textwidth}
\centering
\includegraphics[width=0.95\textwidth]{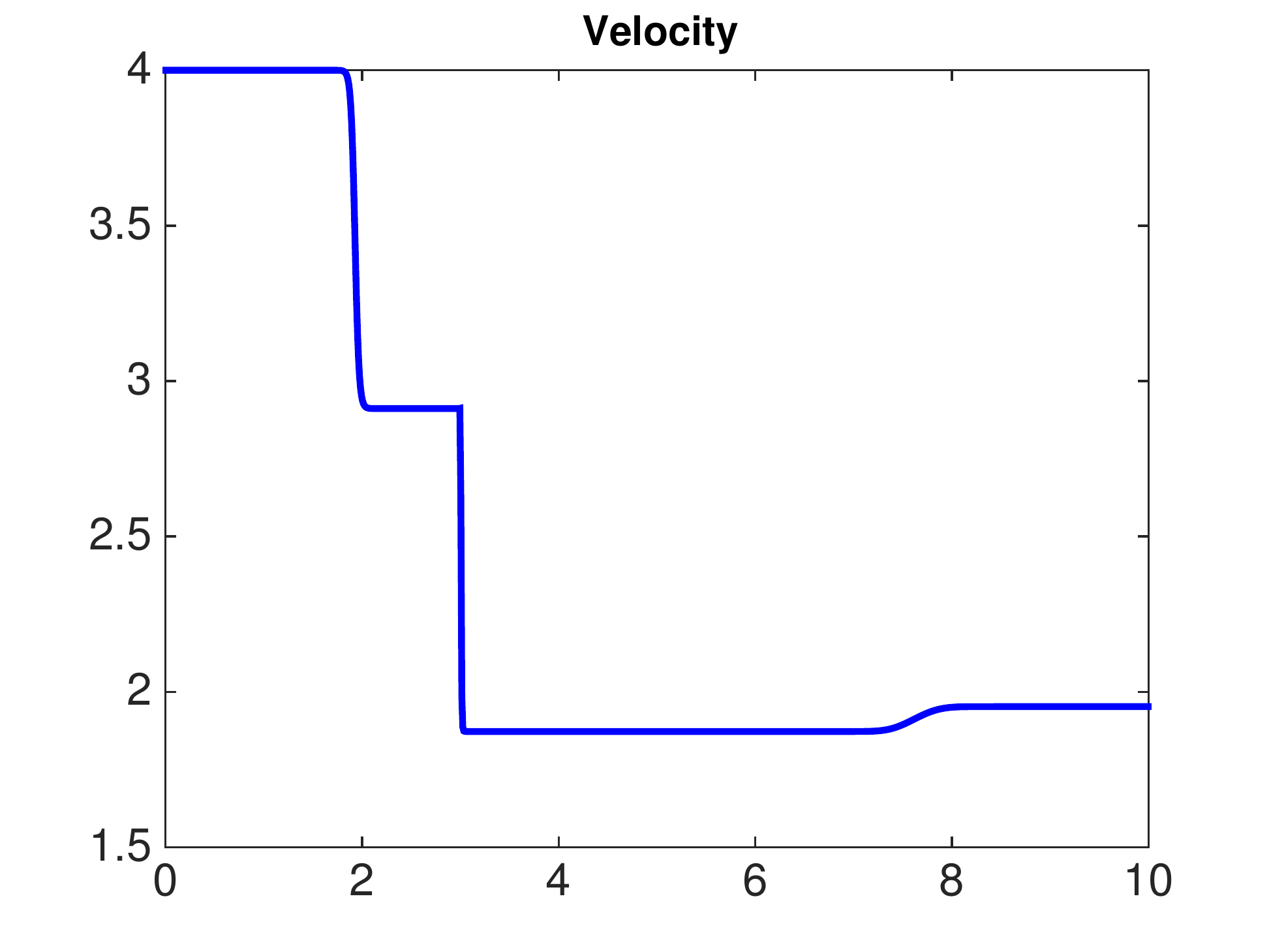}
\end{minipage}
}
\subfigure{
\begin{minipage}[t]{0.31\textwidth}
\centering
\includegraphics[width=0.95\textwidth]{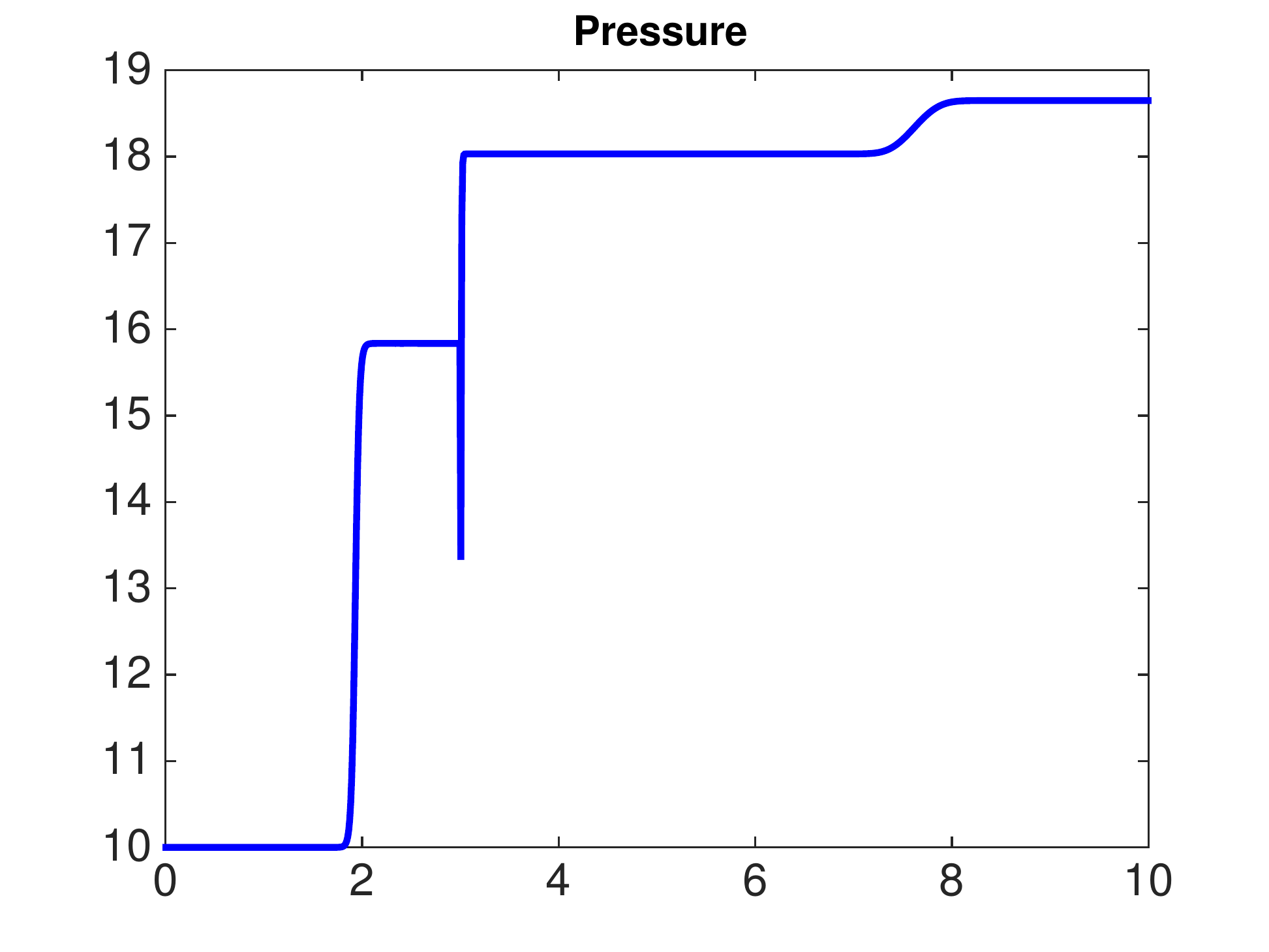}
\end{minipage}
}
\caption*{Fig. 4.5. Test 5.}
\end{figure}

\noindent
{\bf Test 6.} The initial data is given by
\begin{equation}\label{4.10}
\begin{array}{lll}
\displaystyle (\rho_-,u_-,p_-,a_0)=(7.0, 1.5, 5.0, 1.0),\quad 0<x<2.9,\\
 (\rho_m, u_m,p_m,a_0)=(1.0, 1.5, 5.0, 1.0), \quad 2.9<x<3,\\
 (\rho_+, u_+,p_+,a_1)=(1.0687, 0.9375, 4.3777, 1.5), \quad 3<x<10.
\end{array}
\end{equation}

We have $U_-\in D_1$, $U_m\in D_2$, $U_+\in D_2$. The result is shown at $t=1.0s$, see Fig. 4.6. The solution begins with a stationary wave, followed by a backward shock wave, followed by a contact discontinuity, then followed by a forward shock wave. The result is the same with that in the transonic case.

\begin{figure}[htbp]
\subfigure{
\begin{minipage}[t]{0.31\textwidth}
\centering
\includegraphics[width=0.95\textwidth]{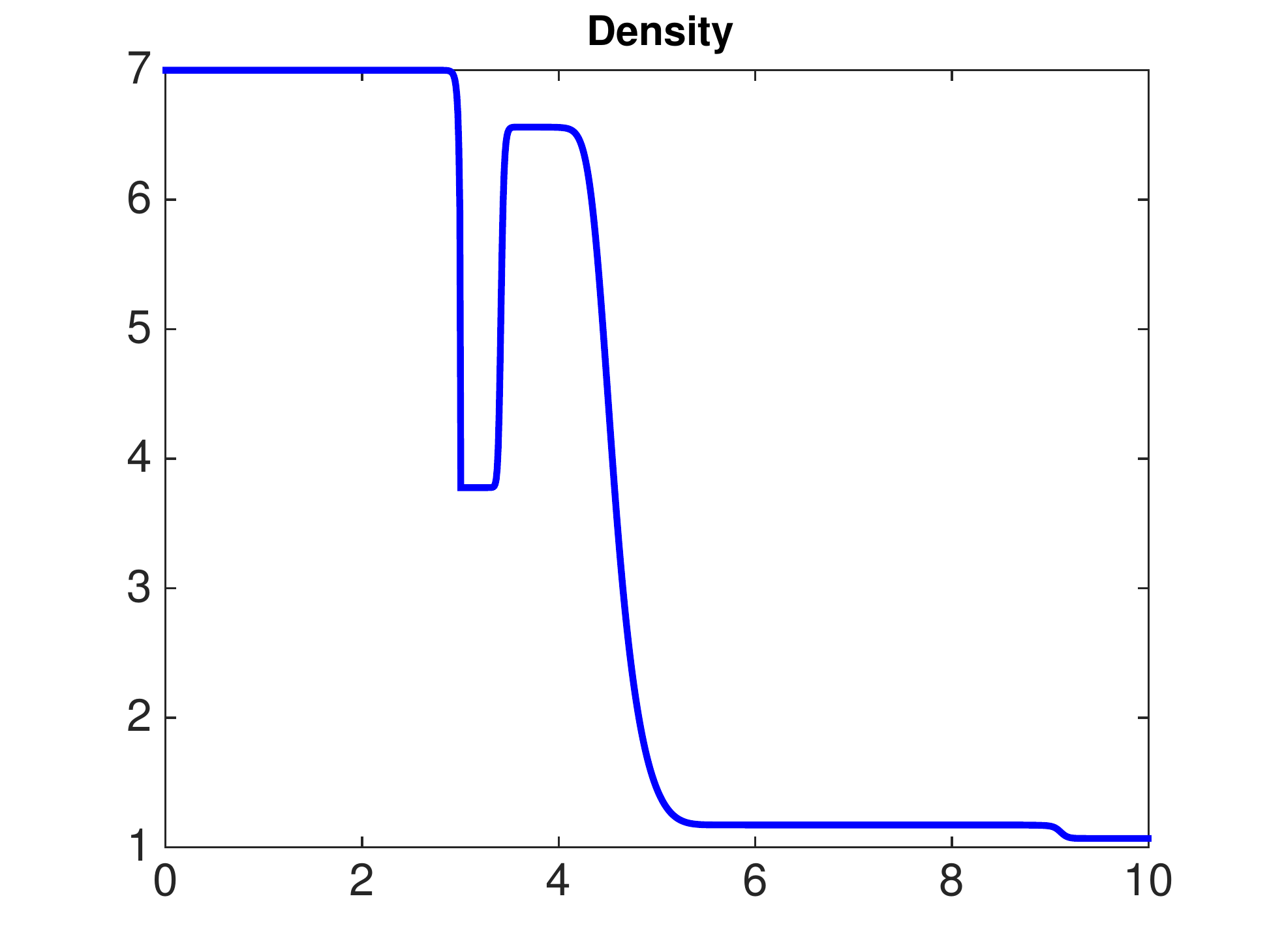}
\end{minipage}
}
\subfigure{
\begin{minipage}[t]{0.31\textwidth}
\centering
\includegraphics[width=0.95\textwidth]{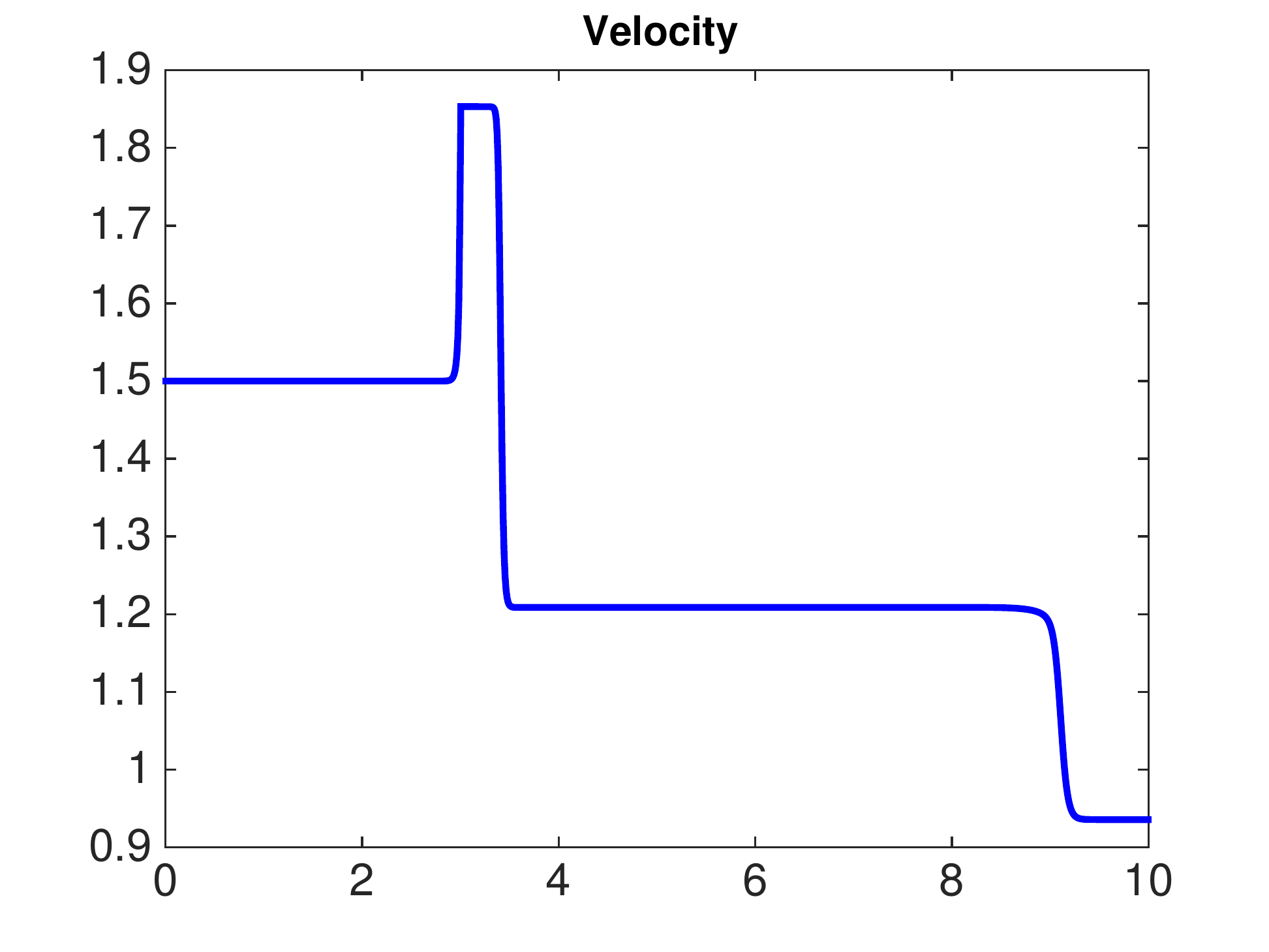}
\end{minipage}
}
\subfigure{
\begin{minipage}[t]{0.31\textwidth}
\centering
\includegraphics[width=0.95\textwidth]{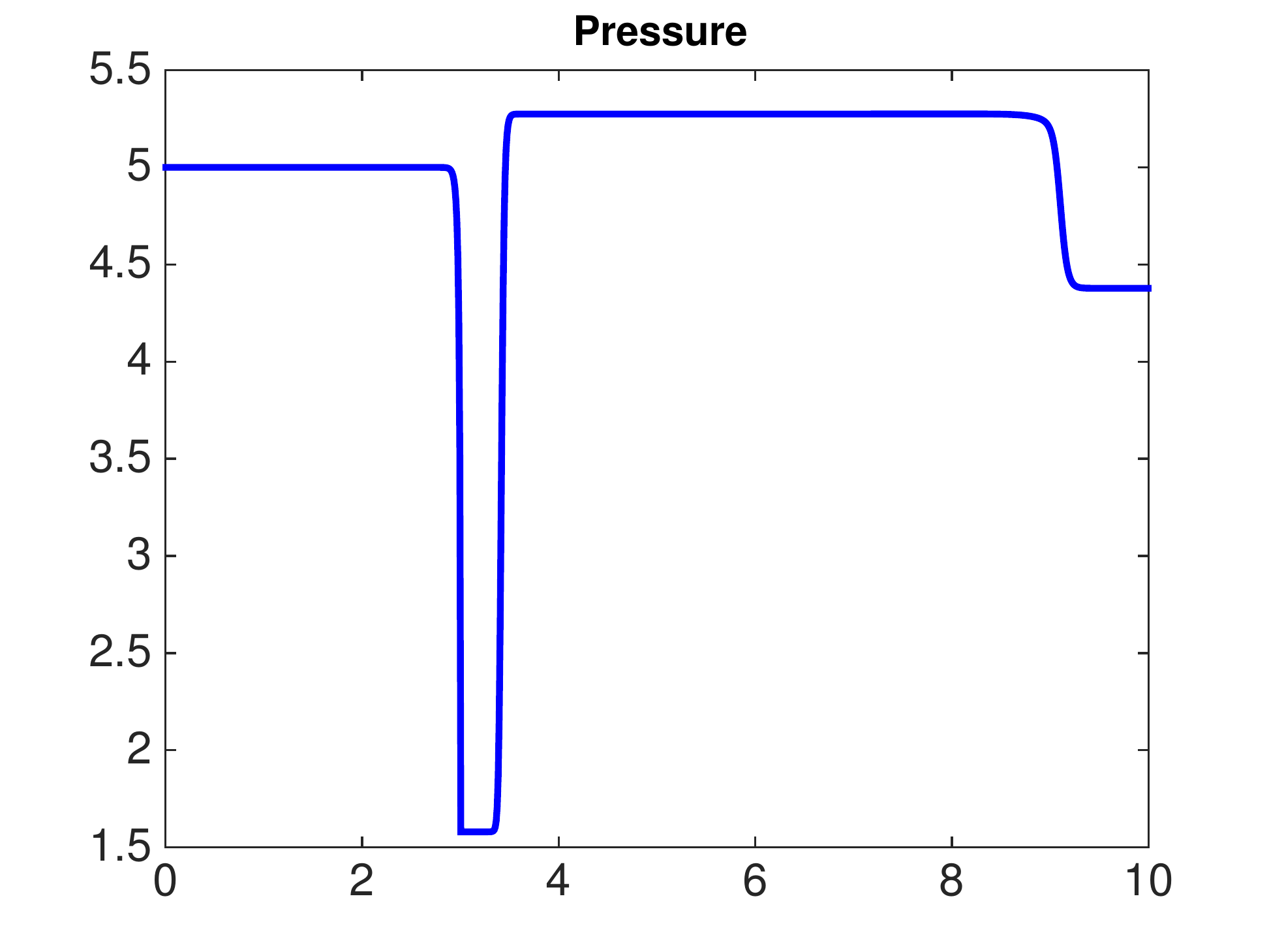}
\end{minipage}
}
\caption*{Fig. 4.6. Test 6.}
\end{figure}

\noindent
{\bf Test 7.} The initial data is given by
\begin{equation}\label{4.11}
\begin{array}{lll}
\displaystyle (\rho_-,u_-,p_-,a_0)=(1.5, 4.0, 10.0, 1.0),\quad 0<x<2.9,\\
 (\rho_m,u_m,p_m,a_0)=(1.2, 4.0, 10.0, 1.0), \quad 2.9<x<3,\\
 (\rho_+,u_+,p_+,a_1)=(1.63872,1.9527, 18.6486, 1.5), \quad 3<x<10.
\end{array}
\end{equation}

We have $U_-\in D_1$, $U_m\in D_2$, $U_+\in D_2$. The result is shown at $t=0.5s$, see Fig. 4.7. The solution begins with a backward shock wave, followed by a stationary  wave, followed by a contact discontinuity, then followed by a forward shock wave. The result is the same with that in the transonic case.

\begin{figure}[htbp]
\subfigure{
\begin{minipage}[t]{0.31\textwidth}
\centering
\includegraphics[width=0.95\textwidth]{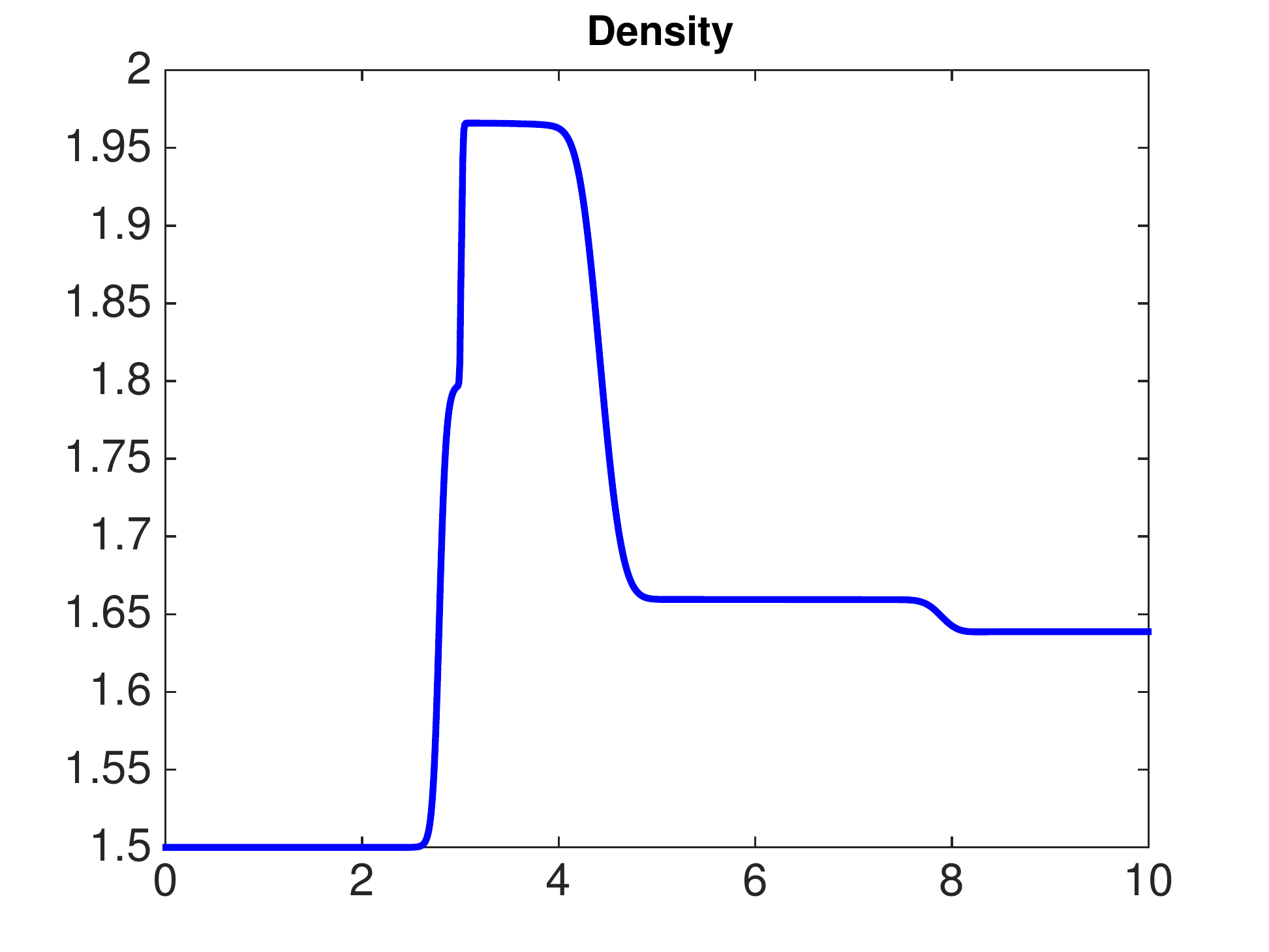}
\end{minipage}
}
\subfigure{
\begin{minipage}[t]{0.31\textwidth}
\centering
\includegraphics[width=0.95\textwidth]{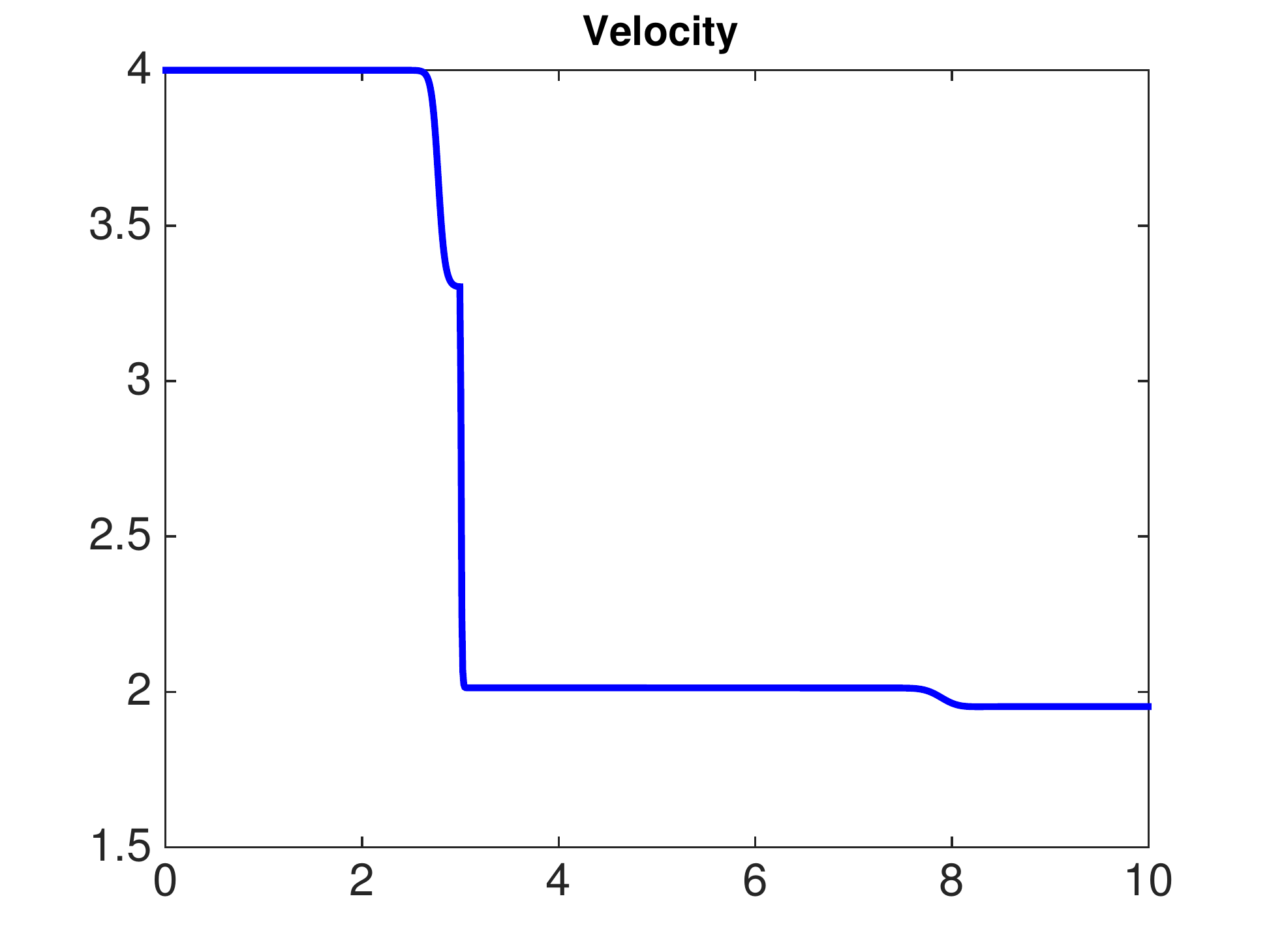}
\end{minipage}
}
\subfigure{
\begin{minipage}[t]{0.31\textwidth}
\centering
\includegraphics[width=0.95\textwidth]{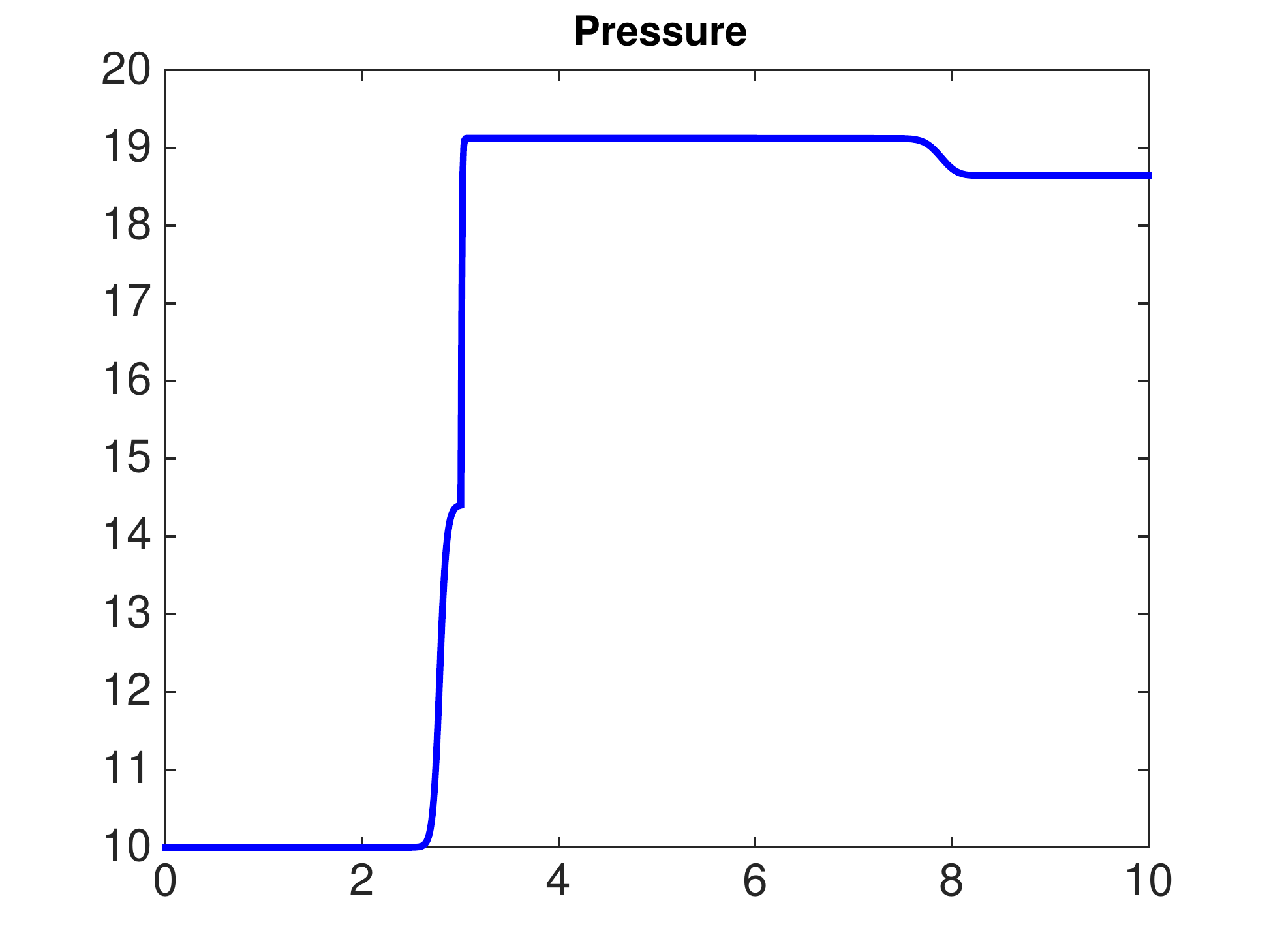}
\end{minipage}
}
\caption*{Fig. 4.7. Test 7.}
\end{figure}

In summary, we mainly obtained the results of contact discontinuity interacts with the stationary wave for nonisentropic flow in a cross-section duct. When the contact discontinuity touches the stationary wave, we need to solve a new Riemann problem with piecewise constant initial data. We classify all the possible cases based on the initial data by using the characteristic analysis method. Numerical results fit well with our analysis in the phase plane.

\vskip2mm
{\bf Acknowledgements} The author wishes to thank Prof. Wancheng Sheng for many valuable discussions about this problem and the method proposed here. The author is also very grateful to the anonymous referees'
for careful reading and comments, which improved the original
manuscript greatly.
\vskip2mm

\end{document}